\newcommand{\End}{\operatorname{End}}
\newcommand{\Hom}{\operatorname{Hom}}
\newcommand{\Ext}{\operatorname{Ext}}
\newcommand{\C}{\mathcal C}
\newcommand{\bZ}{\mathbb Z}
\newcommand{\newword}[1]{{\bf \emph{#1}}} 
\newcommand{\bt}{\operatorname{bot}}
\newcommand{\tp}{\operatorname{top}}
\newcommand{\la}{\langle}
\newcommand{\ra}{\rangle}
\newcommand{\vdim}{\mathbf{dim}}
\newcommand{\Rad}{\mathcal{R}}
\newtheorem{thm}{Theorem}[section]
\newtheorem{lemma}[thm]{Lemma}
\newtheorem{corollary}[thm]{Corollary}
\newtheorem{proposition}[thm]{Proposition}
\newtheorem{conjecture}[thm]{Conjecture}
\theoremstyle{definition}
\newtheorem{definition}[thm]{Definition}
\newtheorem{example}[thm]{Example}
\newtheorem{remark}[thm]{Remark}
\definecolor{dark-green}{RGB}{14,150,2}
\newcommand{\gpoint}{{\color{dark-green}{\circ}}}
\newcommand{\rpoint}{\textcolor{red}{\bullet}}
	\definecolor{darkgray}{rgb}{0.66, 0.66, 0.66}
\definecolor{bluem)}{rgb}{0.0, 0.5, 0.69}
\title [Perfectly clustering words and band bricks]
    {A generalization of perfectly clustering words and band bricks for certain gentle algebras}
\author[Dequêne]{Benjamin Dequêne}
\address[B.~Dequ\^ene]{Département de math\'ematiques, LaCIM, Universit\'e du Qu\'ebec \`a Montr\'eal}
\email{dequene.benjamin@courrier.uqam.ca}
\author[Lapointe]{Mélodie Lapointe}
\address[M.~Lapointe]{D\'epartement de math\'ematiques et de statistique, Université de Moncton, Canada}
\email{melodie.lapointe@umoncton.ca}
\author[Palu]{Yann Palu}
\address[Y.~Palu]{Laboratoire Amienois de Math\'ematique fondamentale et appliqu\'ee, UPJV, Amiens, France}
\email{yann.palu@u-picardie.fr}
\author[Plamondon]{Pierre-Guy Plamondon}
\address[P.-G.~Plamondon]{Laboratoire de Math\'ematiques de Versailles, UVSQ, CNRS, Universit\'e Paris-Saclay, Institut universitaire de France (IUF)}
\email{pierre-guy.plamondon@uvsq.fr}
\thanks{The first author acknowledges the Institut des Sciences Mathématiques (ISM) for its partial support. The second author acknowledges the support of the Natural Sciences and Engineering Research Council of Canada (NSERC), [funding reference number BP–545242–2020] and the support of the Fonds de Recherche du Qu\'ebec en Science et Technologies. The third, fourth, and sixth authors acknowledge the support of the French ANR grant CHARMS (ANR-19-CE40-0017-02). The fourth author acknowledges the support of the Insitut Universitaire de France (IUF). The fifth author was partially supported by NSERC Canada. The sixth author acknowledges an NSERC Discovery Grant and the support of the Canada Research Chairs program.}
\author[Reutenauer]{Christophe Reutenauer}
\address[C.~Reutenauer]{D\'epartement de math\'ematiques, Universit\'e du Qu\'ebec \`a Montr\'eal}
\email{Reutenauer.Christophe@uqam.ca}
\author[Thomas]{Hugh Thomas}
\address[H.~Thomas]{D\'epartement de math\'ematiques, Universit\'e du Qu\'ebec \`a Montr\'eal}
\email{thomas.hugh\_r@uqam.ca}
\dedicatory{Dedicated to the memory of Andrzej Skowro\'nski}
\begin{document}

\begin{abstract}
We generalize the perfectly clustering words of Simpson and Puglisi and relate them to band bricks over certain gentle algebras. This allows us to prove a generalization of a conjecture by the second author on perfectly clustering words. 
\end{abstract}

\maketitle

\tableofcontents

\section{Introduction} \label{sec:intro}

Christoffel words form a family of well-studied words on two letters associated with certain lattice paths. We refer to \cite{BLRS} for a comprehensive introduction. Their importance stems from their appearance in various contexts, e.g. when studying palindromes, (positive) primitive elements of the free group on two generators, continued fractions, Markoff numbers, and discretization of lines.
Perfectly clustering words, introduced in \cite{PS}, generalize (lower) Christoffel words to alphabets with any number of letters. They arise in connection with word combinatorics and free groups, number theory, and dynamical systems.

In this article, we unveil a new relationship between perfectly clustering words and representation theory, via a specific family of \newword{gentle algebras}. This brings new tools into the study of perfectly clustering words, and hence of Christoffel words, that we apply in order to prove a conjecture from \cite{L2020}.

We now describe some of our main results. 

\subsection{Perfectly clustering words}
Let $A$ be a totally ordered alphabet and let $w\in A^\ast$ be a non-empty word. The \newword{Burrows--Wheeler transformation} of $w$, denoted by $BW(w)$, is defined as follows. Consider all words $w_1,\ldots,w_r$ that are conjugate (i.e. cyclically equivalent) to $w$, ordered in lexicographic order. Then $BW(w) = a_1a_2\cdots a_r$ where $a_1,\ldots,a_r$ are the last letters of $w_1,\ldots,w_r$, respectively.

If $a_1\geq a_2\geq \cdots\geq a_r$, and if $w$ is primitive, meaning that we cannot write $w$ as a power of another word, then $w$ is called \newword{perfectly clustering}.

For example, 
if $w=acab$, then $w_1=aba\color{blue}{c}$, $w_2=aca\color{blue}{b}$, $w_3=bac\color{blue}{a}$, $w_4=cab\color{blue}{a}$, and $w$ is perfectly clustering.
On the contrary, if $w=acba$, then
$w_1=aac\color{blue}{b}$, $w_2=acb\color{blue}{a}$, $w_3=baa\color{blue}{c}$, $w_4=cba\color{blue}{a}$, and $w$ is not perfectly clustering. 

\subsection{The gentle algebra \texorpdfstring{$\Lambda_n$}{Lambda} \texorpdfstring{}{n}}

We relate perfectly clustering words to representation theory via representations of a particular finite-dimensional algebra $\Lambda_n$, over an algebraically closed field $k$. 
This algebra is given by generators and relations in the guise of the following quiver with relations:
$$\begin{tikzcd}
	{Q_n:\; 1} & 2 & \cdots & n & {R_n: \beta_i\alpha_{i+1}=0,\; \alpha_i\beta_{i+1}=0} 
	\arrow["{\alpha_1}", shift left=1, tail reversed, no head, from=1-1, to=1-2]
	\arrow["{\beta_1}"', shift right=1, tail reversed, no head, from=1-1, to=1-2]
	\arrow["{\beta_2}", shift left=1, from=1-3, to=1-2]
	\arrow["{\alpha_2}"', shift right=1, from=1-3, to=1-2]
	\arrow["{\alpha_{n-1}}"', shift right=1, from=1-4, to=1-3]
	\arrow["{\beta_{n-1}}", shift left=1, from=1-4, to=1-3]
\end{tikzcd}$$

The left modules over $\Lambda_n$ can equivalently be understood as representations of $(Q_n,R_n)$.
They are thus given by
\begin{itemize}
 \item a finite-dimensional vector space at each vertex,
 \item a linear map between the relevant vector spaces at each arrow, satisfying the relations in $R_n$.
\end{itemize}
The quiver with relations $(Q_n,R_n)$ being \newword{gentle}, its indecomposable representations are well understood \cite{BR}: they are either \newword{string} representations $M(\omega)$, given by certain words $\omega$ in the arrows of $Q_n$ and their formal inverses, or \newword{band} representations $B_{z,m,\lambda}$, given by certain non-oriented cycles $z$ in $Q_n$ and two parameters $m\in\mathbb{N}$ and $\lambda\in k^\times = k\setminus\{0\}$.

\subsection{The link between perfectly clustering words and representations of \texorpdfstring{$\Lambda_n$}{Lambda} \texorpdfstring{}{n}}

With each integer $i\in\{1,\ldots,n\}$, we associate the cycle $$z_i=\alpha_1\alpha_2\cdots\alpha_{i-1}\beta_{i-1}^{-1}\cdots\beta_2^{-1}\beta_1^{-1}.$$
With each primitive word $w=i_1\cdots i_r\in\{1,\ldots,n\}^\ast$ on $n$ letters, we associate the cycle $\varphi(w)=z_{i_1}\cdots z_{i_r}$.
Our first main result is

\begin{thm}[see \cref{thm: brick bands and pcw}]
    A primitive word $w$ on $n$ letters is perfectly clustering if and only if the band $\Lambda_n$-module $B_{\varphi(w),1,\lambda}$ is a brick for some (equivalently any) $\lambda\in k^\times$. 
\end{thm}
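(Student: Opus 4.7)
The plan is to apply the combinatorial description of Hom spaces between band modules of a gentle algebra to reduce the brick condition on $B_{\varphi(w),1,\lambda}$ to a purely combinatorial condition on the cyclic word $\varphi(w)$, and then to recognize this condition as perfectly clustering.

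First I would invoke the standard theory (due to Krause, Crawley--Boevey, and successors) that computes $\Hom$ between band modules of gentle algebras via \emph{graph maps}: each such map is determined by a common substring of the two bands satisfying explicit boundary conditions at its two ends coming from the gentle relations. For a band module of quasi-length $1$, the identity and its scalar multiples are always present in the endomorphism ring, and the module is a brick if and only if no other graph map from the band to itself exists.

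Next I would analyze the band $\varphi(w)$. Each factor $z_{i_j}$ is a \emph{pyramid}: ascending from vertex $1$ through vertices $2, \ldots, i_j$ via the arrows $\alpha_1, \ldots, \alpha_{i_j - 1}$, and descending back to vertex $1$ via $\beta_{i_j - 1}^{-1}, \ldots, \beta_1^{-1}$. So $\varphi(w)$ is a cyclic skyline of $r$ pyramids separated by troughs at vertex $1$ (some pyramids may be degenerate, corresponding to the letter $1$ in $w$). Using the gentle relations, a local analysis at troughs and peaks shows that any self-graph-map of $\varphi(w)$ can be represented by a common substring whose two endpoints lie at troughs; these troughs are in bijection with the cyclic conjugates $w^{(j)} = i_j i_{j+1} \cdots i_{j-1}$ of $w$.

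The main combinatorial step is to show that a non-trivial graph map between the rotations of the band indexed by $j$ and $k$ (with $j \neq k$) exists if and only if both $w^{(j)} < w^{(k)}$ lexicographically and $i_{j-1} < i_{k-1}$. Reading the two rotations forward from their troughs, the maximal common substring agrees pyramid-by-pyramid so long as the corresponding letters of the conjugates agree (this is just lexicographic comparison); it terminates at the first position where $i_{j+\ell} \neq i_{k+\ell}$, and assuming $i_{j+\ell} < i_{k+\ell}$ the termination occurs after the ascent of the shorter pyramid, where rotation $j$ starts descending via $\beta^{-1}$ while rotation $k$ continues ascending via $\alpha$---this direct/inverse mismatch is exactly the right-hand boundary condition for a graph map. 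The analogous backward analysis into the preceding pyramid yields the left-hand boundary condition, which is satisfied precisely when $i_{j-1} < i_{k-1}$: again the asymmetry between ascending $\alpha$'s and descending $\beta^{-1}$'s encodes the correct direction of the inequality on the pair of last letters.

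Combining these, the brick condition is equivalent to: for all distinct pairs $(j,k)$, $w^{(j)} < w^{(k)}$ implies $i_{j-1} \geq i_{k-1}$, which is the definition of perfectly clustering. The main obstacle is the central step, namely the case analysis matching graph-map boundary conditions to the pair of inequalities; one must verify carefully the asymmetry in how the $\alpha$'s (ascent) and $\beta^{-1}$'s (descent) interact with the gentle relations to produce the correct condition on $i_{j-1}$ versus $i_{k-1}$ at the backward endpoint, and also check that no extra graph maps arise from common substrings whose endpoints fall mid-pyramid rather than at a trough.
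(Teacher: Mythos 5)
Your strategy is sound, but it is genuinely different from the one in the paper. The paper never computes $\End(B_{\varphi(w),1,\lambda})$ directly from the combinatorics of the band: it first converts the brick condition into a geometric one (a band module is a brick iff its closed accordion/slalom has no self-intersection, via Krause's theorem plus a crossing--morphism count), then introduces the Dyck path model for multislaloms on $S_n$, reads off the multiset of circular words traced by the multislalom with $g$-vector $(-|w|,|w|_2,\dots,|w|_n)$, and identifies this multiset (after erasing the $1$'s) with the Gessel--Reutenauer multiset $\Phi(n^{|w|_n}\cdots 2^{|w|_2})$; the theorem then follows because $\Phi$ inverts $BW$ on perfectly clustering words, so $w$ is perfectly clustering iff that multiset is a single necklace iff the multislalom is a single simple curve. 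You instead stay entirely on the algebraic side, enumerating self-graph-maps of the ``skyline of pyramids'' band and matching them with violations of the circular-factor characterization of perfectly clustering (which is the paper's Proposition 5.6: no pair of circular factors $aub$, $a'ub'$ with $a<a'$ and $b<b'$). Your route is more elementary and self-contained (no surface model, no $\Phi$), and your case analysis does close up correctly: the common substring must descend all the way to a trough and ascend back out, its right end sits at the peak of one pyramid versus mid-descent of a taller one (giving $i_{j-1}<i_{k-1}$) and its left end at the peak of one pyramid versus mid-ascent of a taller one (giving the first-disagreement inequality, i.e.\ $w^{(j)}<w^{(k)}$). What the paper's longer detour buys is the full identification of the semibrick decomposition with the multiset $\Phi(BW)$, which is exactly what is needed later for \cref{length} and \cref{lengthconj}; your argument proves the brick criterion but not that finer statement.

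Two points need repair before this is a complete proof. First, your sentence claiming the common substring's ``two endpoints lie at troughs'' is false as stated (a substring with both ends at troughs is automatically at the bottom at every occurrence and can never be on top); the correct statement, which your subsequent analysis actually uses, is that the substring passes \emph{through} troughs but terminates mid-pyramid, at the peak of the shorter pyramid in the top occurrence and strictly below the peak of the taller one in the bottom occurrence. Second, Krause's theorem counts pairs with $[v]=[v']$ \emph{or} $[\widetilde v]=[v']$, and you never address the reversed case. It does turn out to be vacuous here, but for a nontrivial reason: reversing a walk exchanges forward $\alpha$'s with inverse $\alpha$'s and likewise for $\beta$, whereas $\varphi(w)$ only ever descends along forward $\alpha$'s and ascends along inverse $\beta$'s, so the reversal of any non-lazy substring of $\varphi(w)$ is not a substring of $\varphi(w)$ at all. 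You should state and check this (or else invoke, as the paper does elsewhere, that perfectly clustering words are conjugate to their reversals). With those two points fixed, and modulo the degenerate letter-$1$ pyramids (which the body of the paper avoids by working over the alphabet $\{2,\dots,n\}$), your proof goes through.
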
 

In the statement above, a representation $M$ of $\Lambda_n$ is called a \newword{brick} if $id_M$ forms a basis of the $k$-vector space of endomorphisms of $M$, i.e. if $\End_{\Lambda_n}(M)\cong k$.
Our proof makes use of topological models \cite{ABCP,BCS,OPS,PPP} describing representations of $\Lambda_n$ in terms of certain arcs on an oriented surface.

\subsection{Consequences for words} The map~$\Phi$ due to Gessel and the fifth author (defined in \cref{ssec:BWandPCW}) is a map $\Phi$ from words in $A^*$ to multiset of conjagacy classes of primitive words. More precisely, for $w \in A^*$, $\Phi(w)$ is the multiset of conjugacy classes of words obtained from the inverse of its standard permutation. The map $\Phi$ gives an inverse map for $BW$ for perfectly clustering words. 

The second author, in her thesis, expressed a conjecture regarding  the number of distinct lengths of perfectly clustering words appearing in $\Phi(n^{\alpha_{n}} \dots 2^{\alpha_2})$. (For compatibility with the rest of the article, we work with the alphabet $\{2,\dots,n\}$.) 

\medskip

\begin{conjecture}[\cite{L2020}]~\label{lengthconj}
  Let $n \geqslant 2$ and  $(\alpha_2, \ldots, \alpha_{n})$ be a $(n-1)$-tuple of nonnegative integers. The number of distinct lengths of conjugacy classes of words appearing in $\Phi(n^{\alpha_{n}} \dots 2^{\alpha_2})$ is at most $\lceil (n-1)/2\rceil$. 
\end{conjecture}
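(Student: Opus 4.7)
My plan is to translate the conjecture into a question about dimension vectors of band bricks over $\Lambda_n$ via \cref{thm: brick bands and pcw}, and to extract the bound from the representation theory and topological model of $\Lambda_n$.

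First, I would show that each conjugacy class appearing in $\Phi(v)$ for $v=n^{\alpha_n}\cdots 2^{\alpha_2}$ is perfectly clustering---a standard consequence of the definition of $\Phi$ on weakly decreasing inputs. By \cref{thm: brick bands and pcw}, each such class $w$ corresponds to a band brick $B_w=B_{\varphi(w),1,\lambda}$ over $\Lambda_n$. A direct analysis of the cyclic walk $\varphi(w)=z_{i_1}\cdots z_{i_r}$, using that each $z_i$ visits vertex $1$ once, vertex $i$ once, and each intermediate vertex $1<k<i$ twice, yields
\[
(\vdim B_w)_1 = |w|, \qquad (\vdim B_w)_k = 2\sum_{\ell>k}\alpha_\ell^{(w)} + \alpha_k^{(w)} \ \text{ for }\ k \geq 2,
\]
where $\alpha_\ell^{(w)}$ denotes the multiplicity of $\ell$ in $w$. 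The number of distinct lengths appearing in $\Phi(v)$ is then the number of distinct first-coordinate dimensions among the bricks $B_j$ obtained from the classes in $\Phi(v)$.

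Since $\Phi$ preserves letter multiplicities, the dimension vector of $\bigoplus_j B_j$ depends only on $(\alpha_2,\ldots,\alpha_n)$. I would then invoke the topological model of $\Lambda_n$ from \cite{ABCP,BCS,OPS,PPP}: each band brick is realized as an essential simple closed curve on an oriented surface $S_n$ associated to $\Lambda_n$, and its dimensions are recorded by intersection numbers with a fixed system of dual arcs. The curves $\{\gamma_j\}$ arising from $\Phi(v)$ then form a multicurve on $S_n$ with prescribed intersection profile.

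The main, and hardest, step is the classification: essential simple closed curves on $S_n$ realizing a multicurve of this form should fall into at most $\lceil (n-1)/2\rceil$ distinct first-coordinate dimension classes. A crude bound of $n-1$ is immediate from the fact that each $B_w$ has support $\{1,\ldots,\max w\}$, leaving $n-1$ possible support intervals. The improvement to $\lceil (n-1)/2\rceil$ is the genuine combinatorial content, and I expect it to come from a symmetry of $\Lambda_n$---most likely the involution swapping $\alpha_i$ with $\beta_i$ at each vertex---which pairs bricks of dual type and forces paired bricks to share their first-coordinate dimension, combined with a parity argument to handle the middle case when $n-1$ is odd. If the topological route proves too indirect, a fallback is a direct combinatorial analysis of the cycle lengths of the inverse standard permutation of $v$, extracting arithmetic constraints on the cycle lengths that yield the asserted bound.
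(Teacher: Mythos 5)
Your reduction of the conjecture to representation theory follows the paper's route up to a point: the observation that every conjugacy class in $\Phi(n^{\alpha_n}\cdots 2^{\alpha_2})$ is perfectly clustering is the paper's \cref{l:multisetClustering}, the passage to band bricks is \cref{thm: brick bands and pcw} together with \cref{prop:curves2words}, and the identification $(\vdim B_w)_1=|w|$ is correct. But the step you yourself flag as ``the main, and hardest, step'' is exactly the content of the theorem, and your proposal does not prove it --- it only conjectures a mechanism (an $\alpha_i\leftrightarrow\beta_i$ involution pairing bricks of equal length plus a parity argument) that does not match how the bound actually arises and that I do not see how to make work. A symmetry pairing bricks of equal first-coordinate dimension could at best halve a count of conjugacy classes that you have not yet bounded; and your ``crude bound of $n-1$'' via supports is unjustified, since nothing prevents two mutually compatible band bricks from having the same support $\{1,\ldots,m\}$. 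The fallback of analyzing cycle lengths of the inverse standard permutation directly is a restatement of the problem rather than an argument.

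The paper's actual mechanism is linear-algebraic, via the Euler form (\cref{maxsize}). Mutually compatible band bricks from distinct one-parameter families have linearly independent $g$-vectors (\cref{basis}, a consequence of the fan structure from \cite{GeissLabardiniSchroer}), these $g$-vectors are pairwise orthogonal for the Euler form (\cref{necc}, using $\la g(X),g(Y)\ra=\dim\Hom(X,Y)-\dim\Hom(Y,X)$), and they all lie in the hyperplane $\sum_i x_i=0$, on which the (globally non-degenerate) Euler form restricts to a skew-symmetric form with radical of dimension at most one. Hence they span an isotropic subspace, whose dimension is at most $\lceil (n-1)/2\rceil$. This bounds the number of distinct conjugacy classes --- not merely the number of distinct lengths --- which is the strengthening the paper records as \cref{length}, with \cref{lengthconj} following immediately. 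If you want to complete your proposal, you need to replace the hoped-for symmetry argument with this orthogonality/isotropy computation (or some genuine substitute for it); as written, the proof has a gap precisely where the $\lceil (n-1)/2\rceil$ is supposed to come from.
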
 

Thanks to a more general result in our setting (\cref{maxsize}),
we prove the following strengthening of the previous conjecture. 

\begin{corollary}[see \cref{length}]
 Let $n \geqslant 1$ and  $(\alpha_2, \ldots, \alpha_{n})$ be a $(n-1)$-tuple of nonnegative integers. The number of distinct conjugacy classes of words appearing in $\Phi(n^{\alpha_{n}} \dots 2^{\alpha_2})$ is at most $\lceil (n-1)/2\rceil$.
\end{corollary}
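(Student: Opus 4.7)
The plan is to translate the statement about words into a statement about band bricks of $\Lambda_n$ via the main theorem, and then to apply \cref{maxsize}.

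First, I would recall from \cref{ssec:BWandPCW} that every conjugacy class appearing in $\Phi(n^{\alpha_n}\cdots 2^{\alpha_2})$ is represented by a primitive perfectly clustering word on $\{2,\ldots,n\}$, and that the multiset of letters occurring across all of those words is exactly $(\alpha_2,\ldots,\alpha_n)$. Invoking \cref{thm: brick bands and pcw}, I would then attach to each distinct conjugacy class $[w]$ the band brick $B_{\varphi(w),1,\lambda}$. Since distinct conjugacy classes of primitive words give cyclically inequivalent bands under $\varphi$, the resulting band bricks are pairwise non-isomorphic, so counting conjugacy classes reduces to counting these band bricks.

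Next, I would observe — by a direct count from the definition $z_i=\alpha_1\cdots\alpha_{i-1}\beta_{i-1}^{-1}\cdots\beta_1^{-1}$ — that the dimension vector of $B_{\varphi(w),1,\lambda}$ depends only on the content of $w$. Consequently, the multiset sum of the dimension vectors of the bricks arising from $\Phi(n^{\alpha_n}\cdots 2^{\alpha_2})$ is determined by $(\alpha_2,\ldots,\alpha_n)$, placing the family squarely within the hypothesis of \cref{maxsize}. The bound $\lceil (n-1)/2\rceil$ produced by that result then gives the desired count on conjugacy classes.

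The main obstacle is not the above translation, which is essentially mechanical once \cref{thm: brick bands and pcw} is available, but rather \cref{maxsize} itself. The striking feature of the bound $\lceil (n-1)/2\rceil$ is its independence from the $\alpha_i$: no matter how large the dimension vectors involved, the number of distinct brick bands that can simultaneously occur in such a family is controlled by $n$ alone. I expect the proof of \cref{maxsize} to rely on the topological/surface model for representations of $\Lambda_n$ from \cite{ABCP,BCS,OPS,PPP}, turning the coexistence of brick bands into a combinatorial constraint on compatible arcs on the underlying surface, from which the bound $\lceil (n-1)/2\rceil$ will emerge.
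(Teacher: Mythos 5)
There is a genuine gap at the point where you claim the family of band bricks is ``placed squarely within the hypothesis of \cref{maxsize}.'' The hypothesis of \cref{maxsize} is not about dimension vectors at all: it bounds the size of a set of band bricks that are \emph{mutually compatible}, i.e.\ pairwise non-isomorphic with $\Hom(B,B')=0=\Hom(B',B)$. Your argument only establishes that the bricks $B_{\psi(w),1,\lambda}$ attached to the distinct conjugacy classes in $\Phi(n^{\alpha_n}\cdots 2^{\alpha_2})$ are pairwise non-isomorphic and that their dimension vectors have a prescribed sum; neither of these facts gives the vanishing of morphisms, and without it \cref{maxsize} simply does not apply. Establishing this compatibility is precisely the non-mechanical content of the paper's proof: \cref{prop:curves2words} identifies $\Phi(n^{\alpha_n}\cdots 2^{\alpha_2})$ (after reinserting the letter $1$) with the multiset of circular words read off from a \emph{single} simple closed multislalom, namely the one with $g$-vector $(-\alpha_2-\cdots-\alpha_n,\alpha_2,\ldots,\alpha_n)$. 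The component curves of a simple closed multislalom are pairwise non-intersecting by definition, so by \cref{prop::briques-courbes} the corresponding bricks are the summands of a band semibrick, hence mutually compatible, and only then does \cref{maxsize} yield the bound.

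Two smaller remarks. First, your expectation about the proof of \cref{maxsize} is off: it is not a topological argument about arcs but a linear-algebraic one, using that the Euler form restricted to the hyperplane $\sum_i x_i=0$ is skew-symmetric, that compatible bricks have orthogonal and linearly independent $g$-vectors (\cref{necc}, \cref{basis}), and that an isotropic subspace there has dimension at most $\lceil (n-1)/2\rceil$. Second, when arguing that distinct conjugacy classes give non-isomorphic band modules, recall that $B_{z,1,\lambda}\cong B_{y,1,\mu}$ also when $y$ is conjugate to the inverse of $z$; this does not hurt the bound (identifications only decrease the count), but it is worth acknowledging since perfectly clustering words are conjugate to their reversals.
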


That is to say, rather than bounding the number of distinct lengths of conjugacy classes of words, we actually bound the number of distinct conjugacy classes.

\subsection{Consequences for \texorpdfstring{$\Lambda_n$}{Lambda} \texorpdfstring{modules}{n} \texorpdfstring{}{modules}}  Let $M$ be a $\Lambda_n$ module. The \newword{$g$-vector} of $M$ is an integer vector which records the multiplicities of indecomposable summands of the minimal projective presentation of $M$. It is defined more precisely in the next section. 

Two brick modules $B$ and $B'$ are \newword{mutually compatible} if $$\Hom_{\Lambda_n}(B,B') = 0 = \Hom_{\Lambda_n}(B',B).$$ A module $M$ is called a  \newword{semibrick} if it decomposes as a direct sum of mutually compatible brick modules. 

In this article, we focus on band modules that are also bricks, which we refer to as \newword{band bricks}.  We also consider direct sums of mutually compatible band bricks, which we refer to as \newword{band semibricks}.

We recall some important results of \cite{GeissLabardiniSchroer}, specialized to our setting.
Band semibricks of $\Lambda_n$ are uniquely determined by their $g$-vectors (see \cref{corr:semibricks-gvec}) up to choices of the parameters $\lambda$ of the brick modules.
Denote by  $\Sigma_n$ the set of cones in the space of $g$-vectors which are spanned by collections of $g$-vectors corresponding to mutually compatible band bricks. Then $\Sigma_n$ is a fan, and the $g$-vectors corresponding to the band bricks which are the direct summands of a band semibrick $M$ are the rays of the minimal cone of $\Sigma_n$ containing the $g$-vector of $M$.

In our setting, we explicitly describe the support of the fan $\Sigma_n$ (see \cref{corr:semibricks-gvec}).
For $n=4$, we explicitly describe the rays of the fan (\cref{thm:bricksfour}).
This recovers and extends results of Coll, Giaquinto, and Magnant \cite{CGM}.

\section{Gentle algebras}

\subsection{Definition of gentle algebras and their representations} \label{ssec:defgentle}
A \newword{quiver} is a $4$-tuple $Q = (Q_0,Q_1,s,t)$ where $Q_0$ and $Q_1$ are (finite) sets called respectively the \newword{vertices} and \newword{arrows}, and $s,t : Q_1 \longrightarrow Q_0$ are the \newword{source} and \newword{target} maps. We think of a quiver as a directed graph.
Let $Q$ be a quiver. A \newword{path} in $Q$ is either
a sequence of arrows $\gamma_r \dots \gamma_1$ such that  $t(\gamma_i) = s(\gamma_{i+1})$ for $1\leq i \leq r-1$, or a formal element $e_i$ at each vertex $i$ called a \newword{lazy path} of length zero. For any path $\gamma = \gamma_r \ldots \gamma_1$, we define $s(\gamma) = s(\gamma_1)$ to be the source of $\gamma$ and $t(\gamma) = t(\gamma_r)$ the target of $\gamma$. We further define $s(e_i)=t(e_i)=i$. 

Let $k$ be an
algebraically closed field. We write $kQ$ for the \newword{path algebra}
of $Q$ over $k$. It is a vector space over $k$ with as a basis the paths of $Q$, equipped with a multiplication: we define the product of two paths $\sigma$ and $\tau$ as their concatenation $\sigma\tau$ if this is a path;
otherwise the product is zero.  The product is then defined on linear combination of paths by bilinearity.  An \newword{oriented cycle} is a path of length at least one whose starting and ending points coincide. 

Assume that for each vertex of $Q$ there are at most two outgoing arrows and at most two incoming arrows. A quotient of a path algebra $kQ/I$ is called \newword{gentle} if
$I$ is an ideal generated by a collection~$R$ of paths of length two such that, at
every vertex~$i$,
\begin{itemize}
  \item for any arrow $\alpha$ with target $i$ there is at most one
     arrow $\beta$ with source $i$ and $\beta\alpha \not\in R$, and at most one
     arrow $\gamma$ with source $i$ and $\gamma\alpha\in R$,
  \item dually, for any arrow $\delta$ with source $i$, there is at most one
     arrow $\beta$ with target $i$ such that $\delta\beta\not\in R$,
    and at most one  arrow $\gamma$ with target $i$  such that $\delta\gamma \in R$,
  \item for any oriented cycle $\alpha_r\cdots\alpha_1$ of $Q$, there is some
    pair of consecutive arrows in $R$ (i.e., some $i$ with $\alpha_{i+1}\alpha_i \in R$ or $\alpha_1\alpha_r \in R$).
    \end{itemize}

A pair $(Q,R)$ is called a \newword{gentle quiver} if $Q$ is a quiver and $R$ is a collection of paths satisfying the above conditions.
All the above conditions except the one requiring some relations in each cycle
can be summarized
as saying that the local configuration at any vertex $v$ can be obtained from
the figure below by removing any number of arrows (and possibly identifying some vertices). 

$$\begin{tikzpicture} 
  \node (a) at (-1,1) {$\bullet$};
  \node (b) at (-1,-1) {$\bullet$};
  \node (c) at (0,0) {$v$};
  \node (d) at (1,1) {$\bullet$};
  \node (e) at (1,-1) {$\bullet$};
  \draw[-stealth] (a)--(c); \draw[-stealth] (b) -- (c);
  \draw[-stealth] (c) --(d); \draw[-stealth] (c) --(e);
  \draw [dashed] (-.5,.65) arc (225:315:.7);
  \draw [dashed] (-.5,-.65) arc (135:45:.7);
  \end{tikzpicture}$$
  
Here $v$ is the central vertex; the dashed arcs designate relations. If an arrow is removed, the
relation containing it is also removed.

A \newword{representation}~$V$ of a gentle quiver $(Q,R)$ over $k$ is an assignment of a
$k$-vector space~$V_i$ to each vertex~$i$ of $Q$ and a linear map~$V_\gamma:V_i\to V_j$ for each arrow~$i \xrightarrow{\gamma} j$  of $Q$, such that the linear combinations of compositions
corresponding to elements of $R$ are zero.

A \newword{morphism} between two representations $V,W$ is a collection of
linear maps $f_i:V_i\rightarrow W_i$ for each vertex $i\in Q$, such that
for each arrow $i \xrightarrow{\gamma} j$ of $Q$, we have $f_j\circ V_\gamma = W_\gamma \circ f_i$.
An \newword{isomorphism} is an invertible morphism; a morphism $(f_i)_{i \in Q_0}$ is an
isomorphism if and only if $f_i$ is an isomorphism for every vertex $i$ of $Q$. 

The category of left $kQ/I$-modules is equivalent to the category of representations of $(Q,R)$ over $k$. Hence any time we talk about a representation, one can have in mind a module, and vice versa. For this reason we may freely use either terminology depending on the context. 

Given two representations $X$ and $Y$,
their direct sum is defined by $(X\oplus Y)_i = X_i \oplus Y_i$ and~$(X\oplus Y)_\gamma = X_\gamma \oplus Y_\gamma$ for each vertex~$i$ and each arrow~$\gamma$.
A non-zero representation is called \newword{indecomposable} if it
is not isomorphic to the direct sum of two non-zero representations.

Let~$\Lambda = kQ/I$.  For each~$i\in\{1, \ldots, n\}$, we denote by~$P_i = \Lambda e_i$ the indecomposable \newword{projective representation} at vertex~$i$.  Any representation~$X$ is part of an exact sequence
\[
 \bigoplus_{i=1}^n P_i^{\oplus b_i} \to \bigoplus_{i=1}^n P_i^{\oplus a_i} \to X \to 0
\]
called a \newword{projective presentation}.  Among all projective presentations of~$X$, there is a unique one (up to isomorphism) for which the~$a_i$ and~$b_i$ are minimal; it is called the \newword{minimal projective presentation} of~$X$.  The \newword{$g$-vector} of~$X$ is the integer vector~$g(X) = (a_1-b_1, \ldots, a_n-b_n)$, where the~$a_i$ and~$b_i$ are those of the minimal projective presentation.  Note that~$g(X\oplus Y) = g(X) + g(Y)$.

For a general quiver $Q$ and a general ideal $I$, it is not feasible to classify the indecomposable
representations. However, if $kQ/I$ is gentle, a classification of the
indecomposable modules was obtained by Butler and Ringel \cite{BR}. We now recall
this classification, which divides indecomposable modules into two types,
\newword{strings} and \newword{bands}.

\subsection{String representations} \label{ssec:stringrep}
 
To begin with, it is convenient to introduce a set of \newword{inverse arrows} of the quiver
$Q$. The set of inverse arrows of $Q$ is just the set of formal objects
$\alpha^{-1}$ for $\alpha\in Q_1$. We define $s(\alpha^{-1})=t(\alpha)$ and $t(\alpha^{-1})=s(\alpha)$.

A \newword{walk} in $Q$
is a sequence $(v_{r}, \alpha_r, v_{r-1}, \dots, v_1,\alpha_1,v_{0})$,
whose elements
alternate between the set of vertices and the set of
arrows and inverse arrows, beginning and ending with a vertex,
and such that
$s(\alpha_i)=v_{i-1}$, $t(\alpha_i)=v_{i}$. For any walk $w = (v_{r}, \alpha_r, v_{r-1}, \dots, v_1,\alpha_1,v_{0})$, we define $s(w) = v_0$ as the \textit{source} of $w$, and $t(w) = v_r$ as the \textit{target} of $w$. The vertices carry no information
except in the case of walks of length zero, so we usually write $w=\alpha_r\cdots\alpha_1$ instead. However, it is important to
remember that zero-length walks exist, and that they specify a (single) vertex.

A \newword{string walk} in a gentle quiver $(Q,R)$ is a walk~$w=\alpha_r\cdots\alpha_1$ in $Q$ such that for all~$i\in\{1, \ldots, r-1\}$, $\alpha_{i+1}\neq \alpha_i^{-1}$, and neither of~$\alpha_{i+1}\alpha_i$ and~$(\alpha_{i+1}\alpha_i)^{-1}$ is a path in~$R$.

  Every string walk $z$ defines a \newword{string representation} $M_z$, as follows.
  Let $z_r,z_{r-1}, \ldots, z_0$ be the vertices of~$Q$ successively visited by the walk~$z$, where~$z_r$ is the target of~$z$ and~$z_0$ its source. 

  Define the vector space at vertex $i$ by
  $$(M_z)_i = \bigoplus_{z_s=i} k\epsilon_s$$
  That is to say, the vector space at vertex $i$ has a basis consisting of
  vectors $\epsilon_s$ with $z_s=i$.

  Now, for each arrow $\gamma$ in $Q$, from, say, vertex $i$ to vertex $j$,
  we must define a linear map $(M_z)_\gamma:(M_z)_i\rightarrow (M_z)_j$. It
  suffices to define $(M_z)_\gamma (\epsilon_s)$, for $z_s=i$.

  We define it as:\begin{itemize}
  \item $\epsilon_{s+1}$ if $z$ travels from $z_s$ to $z_{s+1}$ forwards along
    $\gamma$, 
  \item $\epsilon_{s-1}$ if $z$ travels from $z_{s-1}$ to $z_s$ backwards along
    $\gamma$,
  \item zero, otherwise.
  \end{itemize}
  Note that the first two possibilities are disjoint by the first point in the
  definition of a string walk. 

  Then $M_z$ does indeed define a representation of $kQ/I$ where we recall that $I$ is the ideal generated by $R$; the
  composition of the linear maps corresponding to any pair of arrows in $R$ is
  zero by the second point in the definition of a string walk.

  It turns out that the representations $M_z$ are indecomposable. A walk and
  its inverse walk define isomorphic representations, but otherwise,
  different string walks yield non-isomorphic representations.

 \begin{example} \label{ex::stringrep} 
 Let us consider the following quiver $Q$
 $$\begin{tikzpicture}[->]
  \node (a) at (0,0) {$1$};
  \node (b) at (2,0) {$2$};
  \node (c) at (4,0) {$3$};
  \draw ([yshift=1mm]b.west)--node[above]{$\beta_{1}$}([yshift=1mm]a.east);
  \draw  ([yshift=-1mm]b.west)--node[below]{$\alpha_{1}$}([yshift=-1mm]a.east);
    \draw ([yshift=1mm]c.west)--node[above]{$\alpha_{2}$}([yshift=1mm]b.east);
  \draw  ([yshift=-1mm]c.west)--node[below]{$\beta_{2}$}([yshift=-1mm]b.east);
  \draw[dashed,-] ([xshift=.4cm]b.north) arc[start angle = 0, end angle = 180, x radius=.4cm, y radius =.2cm];
  \draw[dashed,-] ([xshift=-.4cm]b.south) arc[start angle = 180, end angle = 360, x radius=.4cm, y radius =.2cm];
  
\end {tikzpicture}
 $$ equipped with the ideal $I$ generated by $R = \{\alpha_1 \beta_2, \beta_1 \alpha_2\}$. For example $$\rho= (1, \alpha_1,2,\alpha_2,3,\beta_2^{-1},2,\alpha_2,3,\beta_2^{-1},2,\beta_1^{-1},1,\alpha_1,2,\alpha_2,3)$$
 is a string walk of $(Q,R)$. More visually, we can draw the same string walk as follows.

\begin{center}
$\displaystyle \xymatrix@R=1em@C=1em{
 & & 3 \ar[ld]_{\alpha_2} \ar[rd]_{\beta_2}   & & 3 \ar[ld]_{\alpha_2} \ar[rd]_{\beta_2} & & & &  3 \ar[ld]_{\alpha_2} \\
 & 2 \ar[ld]_{\alpha_1} & & 2& & 2  \ar[rd]_{\beta_1} & & 2 \ar[ld]_{\alpha_1}& \\
 1 & & & & & & 1 & & }$
\end{center}

By convention, arrows always point downwards. To get the string representation $M_\rho$ associated to a string $\rho$ combinatorially, we can replace each vertex of the string in the drawing above by the corresponding $\epsilon_i$. Each arrow defines the direction of the linear map that we will get. 

$$\displaystyle \xymatrix@R=1em@C=1em{
 & & \epsilon_6 \ar[ld]_{\alpha_2} \ar[rd]_{\beta_2}   & & \epsilon_4 \ar[ld]_{\alpha_2} \ar[rd]_{\beta_2} & & & &  \epsilon_0 \ar[ld]_{\alpha_2} \\
 & \epsilon_7 \ar[ld]_{\alpha_1} & & \epsilon_5& & \epsilon_3  \ar[rd]_{\beta_1} & & \epsilon_1 \ar[ld]_{\alpha_1}& \\
 \epsilon_8 & & & & & & \epsilon_2 & & }$$

Then by squashing the string, we get the representation below.

$$\begin{tikzpicture}[->,scale=0.9]
  \node (a) at (-3,0) {$\langle \epsilon_2,\epsilon_8 \rangle$};
  \node (b) at (2,0) {$\langle \epsilon_1, \epsilon_3, \epsilon_5, \epsilon_7 \rangle$};
  \node (c) at (7,0) {$\langle \epsilon_0, \epsilon_4,\epsilon_6 \rangle$};
  \draw ([yshift=1mm]b.west)--node[above]{$\begin{matrix}\epsilon_2 \longmapsfrom \epsilon_3\end{matrix}$}([yshift=1mm]a.east);
  \draw  ([yshift=-1mm]b.west)--node[below]{$\begin{matrix}
  \epsilon_8 \longmapsfrom \epsilon_7 \\
  \epsilon_2 \longmapsfrom \epsilon_1
\end{matrix}   $}([yshift=-1mm]a.east);
    \draw ([yshift=1mm]c.west)--node[above]{$\begin{matrix}
  \epsilon_7 \longmapsfrom \epsilon_6 \\
  \epsilon_5 \longmapsfrom \epsilon_4 \\
  \epsilon_1 \longmapsfrom \epsilon_0
\end{matrix}   $}([yshift=1mm]b.east);
  \draw  ([yshift=-1mm]c.west)--node[below]{$\begin{matrix}
  \epsilon_5 \longmapsfrom \epsilon_6 \\
  \epsilon_3 \longmapsfrom \epsilon_4 
\end{matrix}   $}([yshift=-1mm]b.east);
  \draw[dashed,-] ([xshift=1.5cm]b.north) arc[start angle = 0, end angle = 180, x radius=1.5cm, y radius =.5cm];
  \draw[dashed,-] ([xshift=-1.5cm]b.south) arc[start angle = 180, end angle = 360, x radius=1.5cm, y radius =.5cm];
  
\end {tikzpicture}
$$

\begin{center}
$ \cong \vcenter{\hbox{ \begin{tikzpicture}[->,scale=0.9]
  \node (a) at (-2.5,0) {$k^2$};
  \node (b) at (2,0) {$k^4$};
  \node (c) at (6.5,0) {$k^3$};
  \draw ([yshift=1mm]b.west)--node[above]{$\left[\begin{matrix}
  0&1&0&0 \\
  0&0& 0 & 0
\end{matrix} \right]  $}([yshift=1mm]a.east);
  \draw  ([yshift=-1mm]b.west)--node[below]{$ \left[\begin{matrix}
 1 & 0 & 0 & 0\\
 0 & 0 & 0 & 1
\end{matrix} \right]$}([yshift=-1mm]a.east);
    \draw ([yshift=1mm]c.west)--node[above]{$\left[\begin{matrix}
 1&0&0 \\
 0&0&0 \\
 0&1&0 \\
 0&0&1 
\end{matrix} \right]  $}([yshift=1mm]b.east);
  \draw  ([yshift=-1mm]c.west)--node[below]{$\left[\begin{matrix}
 0&0&0 \\
 0&1&0 \\
 0&0&1 \\
 0&0&0 
\end{matrix} \right]  $}([yshift=-1mm]b.east);
  \draw[dashed,-] ([xshift=1cm]b.north) arc[start angle = 0, end angle = 180, x radius=1cm, y radius =.5cm];
  \draw[dashed,-] ([xshift=-1cm]b.south) arc[start angle = 180, end angle = 360, x radius=1cm, y radius =.5cm];
  
\end {tikzpicture}}}
$
\end{center} 

\end{example}

\subsection{Band representations}\label{ssec::band}
The definition of bands is slightly more complicated than
the definition of strings. A
  \newword{band walk} is a non-lazy string walk subject to the additional conditions:
  \begin{itemize}
   \item its starting and ending points coincide,
   \item it is primitive (i.e., not a power of a smaller walk),
   \item all of its powers are strings.
  \end{itemize}

  Given a band walk $z$, a positive integer $m$, and $\lambda\in k^\times = k\setminus\{0\}$, we define a representation $B_{z,m,\lambda}$. Let $r$ be the length of $z$.

  We introduce a collection of linearly independent vectors $\epsilon_i^p$ for
  $0\leq i\leq r-1$ and $1\leq p \leq m$. We define the vector spaces of the representation by
  $$(B_{z,m,\lambda})_i = \bigoplus_{z_s=i, 0\leq s \leq r-1, 1\leq p\leq m}
    k\epsilon^p_{s}.$$
    This has the effect that each vertex visited by $z$ except the last one
    contributes $m$ basis elements to the vector space over the corresponding
    vertex.  We fix the convention that $\epsilon^p_{r}=\epsilon^p_0$.

    Now, let $\gamma$ be an arrow of $Q$ from vertex $i$ to vertex $j$. We
    define $(B_{z,m,\lambda})_\gamma$ almost in the same way as for a string walk.
    Specifically, $(B_{z,m,\lambda})_\gamma(\epsilon^p_s)$ is defined as follows
    unless $\gamma$ is the arrow which $z$ takes between $z_{r-1}$ and
    $z_r$:
\begin{itemize}
  \item $\epsilon^p_{s+1}$ if $z$ travels from $z_s$ to $z_{s+1}$ forwards along
    $\gamma$, 
  \item $\epsilon^p_{s-1}$ if $z$ travels from $z_{s-1}$ to $z_s$ backwards along
    $\gamma$,
  \item zero, otherwise
    \end{itemize}

    The remaining case splits into two possibilities. The
    first is when $z_{r-1}=i$, $z_r=j$, and $\gamma$ is the arrow which
    $z$ takes from $z_{r-1}$ to $z_r$. In this case, we
     define $(B_{z,m,\lambda})_\gamma$ restricted to $\bigoplus_{p=1}^m  k\epsilon^p_{r-1}$
    as a linear map
    to $\bigoplus_{p=1}^m k\epsilon^p_{r}$ given, with respect to these bases,
    by a single Jordan block with
    eigenvalue $\lambda$. 

    The second possibility is when $z_{r-1}=j$, $z_r=i$, and $z$ follows
    $\gamma$ in the reverse direction from $z_{r-1}$ to $z_r$. 
    In this case, we define $(B_{z,m,\lambda})_\gamma$ restricted to
    $\bigoplus_{p=1}^m k\epsilon^p_{r}$ as a linear map to
    $\bigoplus_{p=1}^m k\epsilon^p_{r-1}$ as a linear map given, with respect
     to these bases, by a single Jordan block of eigenvalue $\lambda^{-1}$.

This completes the definition of the band representation $B_{z,m,\lambda}$.

We note the two following points: 
\begin{itemize}
\item the band representation $B_{z,m,\lambda}$ is indecomposable;

\item two band representations $B_{z,m,\lambda}$ and $B_{y,n,\mu}$ are isomorphic if, and only if, $m = n$ and :
\begin{itemize}
\item $z$ is (cyclically) conjugate to $y$ and $\lambda = \mu$; or
\item $z$ is conjugate to the inverse of $y$ and $\lambda = \mu^{-1}$.
\end{itemize}
\end{itemize}

 \begin{example} \label{ex::bandrep} 
 Let us consider the same gentle quiver $(Q,R)$ as Example \ref{ex::stringrep}.
 $$\begin{tikzpicture}[->]
  \node (a) at (0,0) {$1$};
  \node (b) at (2,0) {$2$};
  \node (c) at (4,0) {$3$};
  \draw ([yshift=1mm]b.west)--node[above]{$\beta_{1}$}([yshift=1mm]a.east);
  \draw  ([yshift=-1mm]b.west)--node[below]{$\alpha_{1}$}([yshift=-1mm]a.east);
    \draw ([yshift=1mm]c.west)--node[above]{$\alpha_{2}$}([yshift=1mm]b.east);
  \draw  ([yshift=-1mm]c.west)--node[below]{$\beta_{2}$}([yshift=-1mm]b.east);
  \draw[dashed,-] ([xshift=.4cm]b.north) arc[start angle = 0, end angle = 180, x radius=.4cm, y radius =.2cm];
  \draw[dashed,-] ([xshift=-.4cm]b.south) arc[start angle = 180, end angle = 360, x radius=.4cm, y radius =.2cm];
  
\end {tikzpicture}
 $$  We can check that $\omega =  \alpha_1 \alpha_2 \beta_2^{-1} \beta_1^{-1}$ is a band walk of $(Q,R)$. More visually, we can draw a band the same way as
 a string. Note that we circle the first and the last vertices because we will identify them.

\begin{center}
$\displaystyle \xymatrix@R=1em@C=1em{
 & & 3 \ar[ld]_{\alpha_2} \ar[rd]_{\beta_2}  & & \\
 & 2 \ar[ld]_{\alpha_1} & & 2  \ar[rd]_{\beta_1} &  \\
*+[o][F-]{1} & & & & *+[o][F-]{1}  }$
\end{center}

Let $m > 0$ and $\lambda \in k^\times$. To get the band representation $B_{\omega,m,\lambda}$ associated to a band $\omega$ combinatorially, we can replace each vertex of the band by a copy of the vector space $k^m$. Each arrow defines an identity map between vector spaces, except the last arrow which defines a map given by a Jordan block matrix.  

\begin{center}
$\displaystyle \vcenter{\xymatrix@R=1em@C=1em{
 & & k^m \ar[ld]_{I_m} \ar[rd]_{I_m}  & & \\
 & k^m \ar[ld]_{J_m(\lambda)} & & k^m \ar[rd]_{I_m} &  \\
 *+[o][F-]{k^m} & & & & *+[o][F-]{k^m }}}$ where $\displaystyle J_m(\lambda) = \left( \begin{matrix}
 \lambda & 1 & & \\
 & \lambda & \ddots & \\
 & & \ddots & 1 \\
 & & & \lambda
 \end{matrix} \right)$.
\end{center}

Then by squashing the band as we do for strings to get string representations, except that we identify the first and last copies of $k^m$ (the circled ones), we get the following representation. 
$$ \begin{tikzpicture}[->]
  \node (a) at (-2,0) {$k^m$};
  \node (b) at (2,0) {$k^{2m}$};
  \node (c) at (6,0) {$k^m$};
  \draw ([yshift=1mm]b.west)--node[above]{$ \left[\begin{matrix}
I_m&0
\end{matrix} \right]$}([yshift=1mm]a.east);
  \draw  ([yshift=-1mm]b.west)--node[below]{$\left[\begin{matrix}
0&J_m(\lambda)
\end{matrix} \right]  $}([yshift=-1mm]a.east);
    \draw ([yshift=1mm]c.west)--node[above]{$\left[\begin{matrix}
 0 \\
 I_m
\end{matrix} \right]  $}([yshift=1mm]b.east);
  \draw  ([yshift=-1mm]c.west)--node[below]{$\left[\begin{matrix}
 I_m \\
 0
\end{matrix} \right]  $}([yshift=-1mm]b.east);
  \draw[dashed,-] ([xshift=1cm]b.north) arc[start angle = 0, end angle = 180, x radius=1cm, y radius =.5cm];
  \draw[dashed,-] ([xshift=-1cm]b.south) arc[start angle = 180, end angle = 360, x radius=1cm, y radius =.5cm];
  
\end {tikzpicture}
$$
 \end{example}

       Every indecomposable representation of $kQ/I$ belongs to exactly
       one isomorphism class of string or band representations, as defined
       above.
       We are using the fact that $k$ is algebraically closed here:
       over a non-algebraically closed field, we would need to use a
       more general definition of band representation.
       For a band~$z$ and an integer~$m$, we call the family~$\{B_{z,m,\lambda} \ | \ \lambda \in k^\times\}$ a \newword{one-parameter family of bands}. 
       
\subsection{Brick modules}\label{ssec:brick}

A module $M$ is called a \newword{brick} if the ring $\End(M):=\Hom(M,M)$ is a skew field.
  Since $\End(M)$ is a finite-dimensional algebra over the algebraically
  closed field $k$,  this condition is
  equivalent to saying that $\End(M)\simeq k$;  equivalently, 
  a vector space basis for $\End(M)$ is given by the identity map. 
  
   \begin{example} \label{ex::brickrep} 
Let us consider the same gentle quiver $(Q,R)$ as in previous examples.
 $$\begin{tikzpicture}[->]
  \node (a) at (0,0) {$1$};
  \node (b) at (2,0) {$2$};
  \node (c) at (4,0) {$3$};
  \draw ([yshift=1mm]b.west)--node[above]{$\beta_{1}$}([yshift=1mm]a.east);
  \draw  ([yshift=-1mm]b.west)--node[below]{$\alpha_{1}$}([yshift=-1mm]a.east);
    \draw ([yshift=1mm]c.west)--node[above]{$\alpha_{2}$}([yshift=1mm]b.east);
  \draw  ([yshift=-1mm]c.west)--node[below]{$\beta_{2}$}([yshift=-1mm]b.east);
  \draw[dashed,-] ([xshift=.4cm]b.north) arc[start angle = 0, end angle = 180, x radius=.4cm, y radius =.2cm];
  \draw[dashed,-] ([xshift=-.4cm]b.south) arc[start angle = 180, end angle = 360, x radius=.4cm, y radius =.2cm];
  
\end {tikzpicture}
$$ 

Let us consider $M = B_{\omega,1,\lambda}$  with $\omega$ being the band defined in Example \ref{ex::bandrep}. 

$$ \begin{tikzpicture}[->]
  \node (a) at (-1,0) {$k$};
  \node (b) at (2,0) {$k^2$};
  \node (c) at (5,0) {$k$};
  \draw ([yshift=1mm]b.west)--node[above]{$ \left[\begin{matrix}
1&0
\end{matrix} \right]$}([yshift=1mm]a.east);
  \draw  ([yshift=-1mm]b.west)--node[below]{$\left[\begin{matrix}
0& \lambda
\end{matrix} \right]  $}([yshift=-1mm]a.east);
    \draw ([yshift=1mm]c.west)--node[above]{$\left[\begin{matrix}
 0 \\
 1
\end{matrix} \right]  $}([yshift=1mm]b.east);
  \draw  ([yshift=-1mm]c.west)--node[below]{$\left[\begin{matrix}
 1 \\
 0
\end{matrix} \right]  $}([yshift=-1mm]b.east);
  \draw[dashed,-] ([xshift=1cm]b.north) arc[start angle = 0, end angle = 180, x radius=1cm, y radius =.5cm];
  \draw[dashed,-] ([xshift=-1cm]b.south) arc[start angle = 180, end angle = 360, x radius=1cm, y radius =.5cm];
  
\end {tikzpicture}
$$

Let $f = (f_1, f_2,f_3) \in \Hom(M,M)$. Then we know that $f_3 = a \cdot \mathsf{Id}_k$ for some $a \in k$ as $f_3$ has to be a linear endomorphism of the field $k$. Because of the commutative squares that $f$ must satisfy, we can easly deduce that $f_2 = a \cdot \mathsf{Id}_{k^2}$ and $f_1 = a \cdot \mathsf{Id}_k$. Hence we conclude that $M$ is a brick module.
 \end{example}

\section{Representations of gentle algebras and curves on surfaces}\label{sec:gentle-curves}

We will be using a topological model developed in \cite{ABCP,BCS,OPS,PPP} to study gentle algebras and their representations.  Although we will mainly be interested in the family of gentle algebras $\Lambda_n$ defined in the introduction, we will devote this section to the description of the general case.

\subsection{Surfaces and dissections} \label{ssec:surfdissec}

By \newword{surface} we will always mean an oriented compact surface~$S$ with a finite number (which may be zero) of open discs removed.  We denote the boundary of~$S$ by~$\partial S$.  Such a surface is determined by its genus and by the number of connected components of~$\partial S$; we will refer to these connected components as the \newword{boundary components} of~$S$.

A \newword{marked surface} is a pair~$(S,M)$, where~$S$ is a surface and~$M$ is a finite set of \newword{marked points} of~$S$ satisfying the following: 
\begin{itemize}
 \item $M$ is the disjoint union of two subsets~$M_{\gpoint}$ and~$M_{\rpoint}$;
 \item on each boundary component of~$S$, marked points in~$M_{\gpoint}$ and~$M_{\rpoint}$ alternate, and there is at least one of each;
 \item marked points are also allowed in the interior of the surface.
\end{itemize}

A \newword{$\gpoint$-arc} is a curve on~$(S,M)$ joining two points in~$M_{\gpoint}$; formally, it is a continuous map from the closed interval~$[0,1]$ to~$S$ with endpoints in~$M_{\gpoint}$ and with the image of its interior is disjoint from $M$. A \newword{$\rpoint$-arc} is defined similarly.

A $\gpoint$-arc is \newword{simple} if it does not intersect itself (except perhaps at its endpoints).

A~\newword{$\gpoint$-dissection} of~$(S,M)$ is a collection of pairwise non-intersecting simple~$\gpoint$-arcs which cut the surface into polygons, called the \newword{cells} of the dissection, each of which contains exactly one marked point in~$M_{\rpoint}$. We define a $\rpoint$-dissection similarly.

\begin{example}\label{exam::three-surfaces}
Here are examples of~$\gpoint$-dissections of a disc (that is, a sphere with one boundary component), an annulus (or a sphere with two boundary components), and a torus with one boundary component.

\begin{figure}[h!]
\begin{center}
\begin{tikzpicture}[mydot/.style={
    circle,
    thick,
    fill=white,
    draw,
    outer sep=0.5pt,
    inner sep=1pt
  }]
\tkzDefPoint(0,0){O}\tkzDefPoint(1.5,0){12}
\tkzDefPointsBy[rotation=center O angle 360/12](12,1,2,3,4,5,6,7,8,9,10){1,2,3,4,5,6,7,8,9,10,11}
\tkzDrawPoints[fill =red,size=4,color=red](2,4,6,8,10,12)
\tkzDrawCircle[line width=0.5mm,black](O,7)

\tkzDrawPoints[size=4,color=dark-green,mydot](O,1,3,5,7,9,11)

\draw[line width=0.4mm,dark-green](1) edge (11);
\draw[line width=0.4mm,dark-green](11) edge (O);
\draw[line width=0.4mm ,dark-green](O) edge (3);
\draw[line width=0.4mm,dark-green](3) edge (7);
\draw[line width=0.4mm,dark-green](7) edge (5);
\draw[line width=0.4mm,dark-green](7) edge (9);
\end{tikzpicture} \quad  \begin{tikzpicture}[mydot/.style={
    circle,
    thick,
    fill=white,
    draw,
    outer sep=0.5pt,
    inner sep=1pt
  }]
\tkzDefPoint(0,0){O}\tkzDefPoint(0,1.5){1}
\tkzDefPointsBy[rotation=center O angle 180](1){2}
\tkzDefPoint(0.7,0){3} \tkzDefPoint(1.1,0){8} \tkzDefPoint(-1.1,0){9}
\tkzDefPointsBy[rotation=center O angle 90](3,4,5){4,5,6}
\tkzDrawPoints[fill =red,size=4,color=red](2,3,5)
\tkzDrawCircle[line width=0.5mm,black](O,1)
\tkzDrawCircle[line width=0.5mm,black,fill=gray!40](O,3)
\tkzDrawPoints[size=4,color=dark-green,mydot](1,4,6)

\draw[line width=0.4mm,dark-green](1) edge (4);
\draw[line width=0.4mm,bend left = 60,dark-green](4) edge (8);
\draw[line width=0.4mm,bend left = 60,dark-green](8) edge (6);
\draw[line width=0.4mm,bend right = 60,dark-green](4) edge (9);
\draw[line width=0.4mm ,bend right =60,dark-green](9) edge (6);
\end{tikzpicture}
\quad \begin{tikzpicture}[mydot/.style={
    circle,
    thick,
    fill=white,
    draw,
    outer sep=0.5pt,
    inner sep=1pt
  }]
\tkzDefPoint(0,0){O}\tkzDefPoint(1.4,1.4){1}
\tkzDefPointsBy[rotation=center O angle 90](1,2,3){2,3,4}
\tkzDefPoint(0.9,0.9){5}
\tkzDefPointsBy[rotation=center O angle 90](5,6,7){6,7,8}

\tkzDrawSquare[line width=0.5mm](1,2)
\tkzDefPointsBy[rotation=center 1 angle 45](5){9}
\tkzDrawSector[rotate,line width=0.5mm, black, fill = gray!40](1,9)(-90)
\tkzDefPointsBy[rotation=center 2 angle 45](6){10}
\tkzDrawSector[rotate,line width=0.5mm, black, fill = gray!40](2,10)(-90)
\tkzDefPointsBy[rotation=center 3 angle 45](7){11}
\tkzDrawSector[rotate,line width=0.5mm, black, fill = gray!40](3,11)(-90)
\tkzDefPointsBy[rotation=center 4 angle 45](8){12}
\tkzDrawSector[rotate,line width=0.5mm, black, fill = gray!40](4,12)(-90)

\tkzDefPoint(1.4,0){13}
\tkzDefPoint(-1.4,0){14}
\tkzDefPoint(0,1.4){15}
\tkzDefPoint(0,-1.4){16}

\tkzDrawPoints[fill =red,size=4,color=red](5,7)
\tkzDrawPoints[size=4,color=dark-green,mydot](6,8)
\draw[line width=0.4mm,dark-green](6) edge (8);
\draw[line width=0.4mm,bend left=40,dark-green](6) edge (14);
\draw[line width=0.4mm,bend right=40,dark-green](6) edge (15);
\draw[line width=0.4mm,bend left=40,dark-green](8) edge (13);
\draw[line width=0.4mm,bend right=40,dark-green](8) edge (16);
\end{tikzpicture}
\caption{\label{fig:dissections} $\gpoint$-dissections of a disc (left), an annulus (middle), and a torus with one boundary component (right).}
\end{center}
\end{figure}
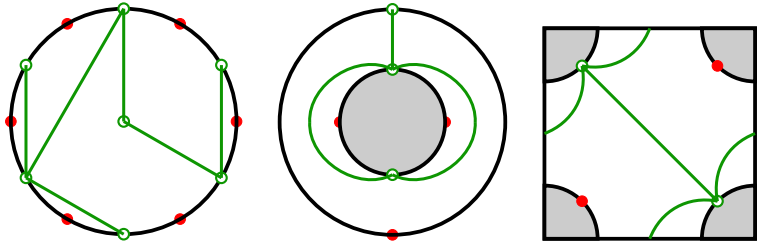

\end{example}

\subsection{The gentle quiver of a dissection} \label{ssec:gentlesurf}
Let~$\Delta^{\gpoint}$ be a~$\gpoint$-dissection of~$(S,M)$.  We define a quiver~$Q(\Delta^{\gpoint})$ and a set of relation~$R(\Delta^{\gpoint})$ as follows:
\begin{enumerate}
 \item the vertices of~$Q(\Delta^{\gpoint})$ are in bijection with the arcs in~$\Delta^{\gpoint}$; 
 \item for each configuration as drawn on the left in \cref{fig:rules_quiver_dissections} in~$\Delta^{\gpoint}$ (that is to say, for each common endpoint of a pair of arcs~$i$ and~$j$ around which~$j$ comes immediately after~$i$ in counter-clockwise orientation), there is an arrow~$i\to j$ in~$Q(\Delta^{\gpoint})$;
 \item for each configuration as drawn on the right in \cref{fig:rules_quiver_dissections} in~$\Delta^{\gpoint}$ with corresponding arrows~$i\xrightarrow{\alpha} j$ and~$j\xrightarrow{\beta} k$, the path~$\beta\alpha$ is a relation in~$R(\Delta^{\gpoint})$.
\end{enumerate}
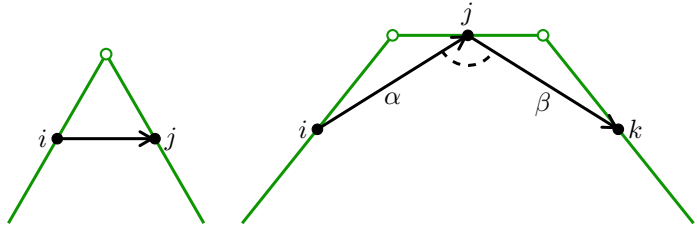
\begin{figure}[h!] 
\begin{center}
\begin{tikzpicture}[mydot/.style={
    circle,
    thick,
    fill=white,
    draw,
    outer sep=0.5pt,
    inner sep=1pt
  }, fl/.style={->,>=latex}]
\tkzDefPoint(0,0){O}\tkzDefPoint(0,1.5){1}
\tkzDefPointsBy[rotation=center O angle 120](1,2){2,3}
\tkzDefMidPoint(1,2)
\tkzGetPoint{i}
\tkzDefMidPoint(1,3)
\tkzGetPoint{j}

\draw[line width=0.4mm,dark-green](1) edge (2);
\draw[line width=0.4mm,dark-green](1) edge (3);
\draw[->, >= angle 60,line width=0.4mm,black] (i) -- (j);
\tkzDrawPoints[size=4,color=black](i,j);\tkzDrawPoints[size=4,color=dark-green,mydot](1);
\tkzLabelPoints[left](i)
\tkzLabelPoints[right](j)
\end{tikzpicture} \quad
\begin{tikzpicture}[mydot/.style={
    circle,
    thick,
    fill=white,
    draw,
    outer sep=0.5pt,
    inner sep=1pt
  }]
\tkzDefPoint(0,0){O}\tkzDefPoint(-1,1.5){1} \tkzDefPoint(1,1.5){2} \tkzDefPoint(-3,-1){3} \tkzDefPoint(3,-1){4}
\tkzDefPoint(0,1.1){5}

\tkzDefMidPoint(1,3)
\tkzGetPoint{i}
\tkzDefMidPoint(1,2)
\tkzGetPoint{j}
\tkzDefMidPoint(2,4)
\tkzGetPoint{k}
\tkzDefMidPoint(i,j)
\tkzGetPoint{a}
\tkzDefMidPoint(j,k)
\tkzGetPoint{b}
\draw[line width=0.4mm,dark-green](1) edge (2);
\draw[line width=0.4mm,dark-green](1) edge (3);
\draw[line width=0.4mm,dark-green](2) edge (4);
\draw[->, >= angle 60,line width=0.4mm,black] (i) -- (j);
\draw[->, >= angle 60,line width=0.4mm,black] (j) -- (k);
\tkzDrawArc[angles,line width=0.4mm,black,dashed](j,5)(-150,-30)
\tkzDrawPoints[size=4,color=black](i,j,k);\tkzDrawPoints[size=4,color=dark-green,mydot](1,2);
\tkzLabelPoints[left](i)
\tkzLabelPoints[above](j)
\tkzLabelPoints[right](k)
\tkzLabelPoint[below](a){$\alpha$}
\tkzLabelPoint[below](b){$\beta$}
\end{tikzpicture} \caption{\label{fig:rules_quiver_dissections} Drawings representing the construction rules of $Q(\Delta^{\gpoint})$ and $R(\Delta^\gpoint)$ described above; (2) on the left and (3) on the right.} \end{center} \end{figure}
\begin{example}\label{ex:quiverofsurfaces}
See in \cref{fig:twosurfquiver} the quivers with relations associated with the dissection drawn in  \cref{fig:dissections}.
 
 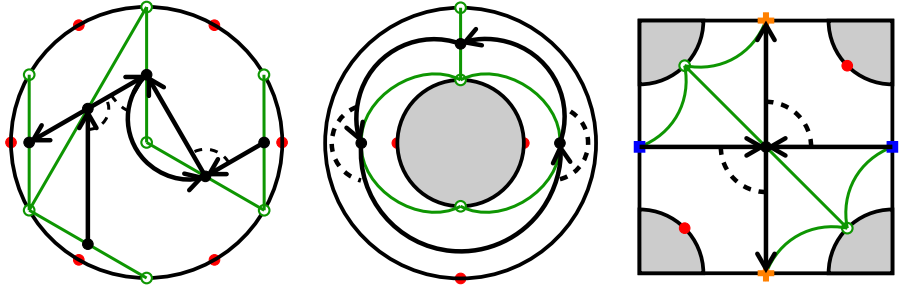
\begin{figure}[h!]
 \centering 
 \begin{tikzpicture}[mydot/.style={
    circle,
    thick,
    fill=white,
    draw,
    outer sep=0.5pt,
    inner sep=1pt
  },scale=1.2]
\tkzDefPoint(0,0){O}\tkzDefPoint(1.5,0){12}
\tkzDefPointsBy[rotation=center O angle 360/12](12,1,2,3,4,5,6,7,8,9,10){1,2,3,4,5,6,7,8,9,10,11}
\tkzDrawPoints[fill =red,size=4,color=red](2,4,6,8,10,12)
\tkzDrawCircle[line width=0.5mm,black](O,7)

\tkzDrawPoints[size=4,color=dark-green,mydot](O,1,3,5,7,9,11)

\draw[line width=0.4mm,dark-green](1) edge (11);
\draw[line width=0.4mm,dark-green](11) edge (O);
\draw[line width=0.4mm ,dark-green](O) edge (3);
\draw[line width=0.4mm,dark-green](3) edge (7);
\draw[line width=0.4mm,dark-green](7) edge (5);
\draw[line width=0.4mm,dark-green](7) edge (9);

%Quiver vertices
\tkzDefMidPoint(1,11) \tkzGetPoint{a}
\tkzDefMidPoint(11,O) \tkzGetPoint{b}
\tkzDefMidPoint(O,3) \tkzGetPoint{c}
\tkzDefMidPoint(3,7) \tkzGetPoint{d}
\tkzDefMidPoint(7,5) \tkzGetPoint{e}
\tkzDefMidPoint(7,9) \tkzGetPoint{f}
\tkzDrawPoints[size=4,fill=black,color=black](a,b,c,d,e,f)

%Quiver arrows
\draw[->, >= angle 60,line width=0.6mm,black](a) -- (b);
\draw[->, >= angle 60, line width=0.6mm,black](b) -- (c);
\draw[->, >= angle 60,line width=0.6mm,black](d) -- (c);
\draw[->, >= angle 60,line width=0.6mm,black](f) -- (d);
\draw[->, >= angle 60,line width=0.6mm,black](d) -- (e);
\tkzDefPoint(-0.1,-0.1){D}
\tkzCircumCenter(b,c,D)\tkzGetPoint{O1}
\tkzDrawArc[->,>= angle 60,line width=0.6mm,black](O1,c)(b)

%Quiver relations
\tkzDefBarycentricPoint(a=2,b=3) \tkzGetPoint{M}
\tkzDefBarycentricPoint(c=2,b=3) \tkzGetPoint{N}
\tkzDrawArc[line width=0.4mm,black,dashed](b,M)(N)
\tkzDefBarycentricPoint(f=2,d=11) \tkzGetPoint{P}
\tkzDefBarycentricPoint(c=2,d=3) \tkzGetPoint{Q}
\tkzDrawArc[line width=0.4mm,black,dashed](d,P)(Q)

\tkzDefBarycentricPoint(c=2,d=3) \tkzGetPoint{R}
\tkzDefPoint(-0.2,0.2){S}
\tkzDrawArc[line width=0.4mm,black,dashed](c,R)(S)
\end{tikzpicture} \quad
\begin{tikzpicture}[mydot/.style={
    circle,
    thick,
    fill=white,
    draw,
    outer sep=0.5pt,
    inner sep=1pt,
    },scale = 1.2]
\tkzDefPoint(0,0){O}\tkzDefPoint(0,1.5){1}
\tkzDefPointsBy[rotation=center O angle 180](1){2}
\tkzDefPoint(0.7,0){3} \tkzDefPoint(1.1,0){8} \tkzDefPoint(-1.1,0){9}
\tkzDefPointsBy[rotation=center O angle 90](3,4,5){4,5,6}
\tkzDrawPoints[fill =red,size=4,color=red](2,3,5)
\tkzDrawCircle[line width=0.5mm,black](O,1)
\tkzDrawCircle[line width=0.5mm,black,fill=gray!40](O,3)
\tkzDrawPoints[size=4,color=dark-green,mydot](1,4,6)

\draw[line width=0.4mm,dark-green](1) edge (4);
\draw[line width=0.4mm,bend left = 60,dark-green](4) edge (8);
\draw[line width=0.4mm,bend left = 60,dark-green](8) edge (6);
\draw[line width=0.4mm,bend right = 60,dark-green](4) edge (9);
\draw[line width=0.4mm ,bend right =60,dark-green](9) edge (6);

%Quiver vertices
\tkzDefPoint(0,1.1){a}
\tkzDefPoint(1.1,0){b}
\tkzDefPoint(-1.1,0){c}
\tkzDrawPoints[size=4,fill=black,color=black](a,b,c)

%Quiver arrows
\tkzDefPoint(0,-1.2){D}
\tkzCircumCenter(b,c,D)\tkzGetPoint{O1}
\tkzDrawArc[->, >= angle 60,line width=0.6mm,black](O1,c)(b)
\tkzDefPoint(0.9,0.9){E}
\tkzCircumCenter(a,b,E)\tkzGetPoint{O1}
\tkzDrawArc[->, >= angle 60,line width=0.6mm,black](O1,b)(a)
\tkzDefPoint(-0.9,0.9){F}
\tkzCircumCenter(a,c,F)\tkzGetPoint{O1}
\tkzDrawArc[->, >= angle 60,line width=0.6mm,black](O1,a)(c)

%Quiver relations
\tkzDefPoint(-1.15,0.4){A}
\tkzDefPoint(-1.1,-0.4){B}
\tkzDefPoint(-1,0){C}
\tkzDrawArc[line width=0.6mm,black,dashed](C,A)(B)
\tkzDefPoint(1.15,0.4){A}
\tkzDefPoint(1.1,-0.4){B}
\tkzDefPoint(1,0){C}
\tkzDrawArc[line width=0.6mm,black,dashed](C,B)(A)

\end{tikzpicture} \quad \begin{tikzpicture}[mydot/.style={
    circle,
    thick,
    fill=white,
    draw,
    outer sep=0.5pt,
    inner sep=1pt
  }, scale = 1.2]
\tkzDefPoint(0,0){O}\tkzDefPoint(1.4,1.4){1}
\tkzDefPointsBy[rotation=center O angle 90](1,2,3){2,3,4}
\tkzDefPoint(0.9,0.9){5}
\tkzDefPointsBy[rotation=center O angle 90](5,6,7){6,7,8}

\tkzDrawSquare[line width=0.5mm](1,2)
\tkzDefPointsBy[rotation=center 1 angle 45](5){9}
\tkzDrawSector[rotate,line width=0.5mm, black, fill = gray!40](1,9)(-90)
\tkzDefPointsBy[rotation=center 2 angle 45](6){10}
\tkzDrawSector[rotate,line width=0.5mm, black, fill = gray!40](2,10)(-90)
\tkzDefPointsBy[rotation=center 3 angle 45](7){11}
\tkzDrawSector[rotate,line width=0.5mm, black, fill = gray!40](3,11)(-90)
\tkzDefPointsBy[rotation=center 4 angle 45](8){12}
\tkzDrawSector[rotate,line width=0.5mm, black, fill = gray!40](4,12)(-90)

\tkzDefPoint(1.4,0){13}
\tkzDefPoint(-1.4,0){14}
\tkzDefPoint(0,1.4){15}
\tkzDefPoint(0,-1.4){16}

\tkzDrawPoints[fill =red,size=4,color=red](5,7)
\tkzDrawPoints[size=4,color=dark-green,mydot](6,8)
\draw[line width=0.4mm,dark-green](6) edge (8);
\draw[line width=0.4mm,bend left=40,dark-green](6) edge (14);
\draw[line width=0.4mm,bend right=40,dark-green](6) edge (15);
\draw[line width=0.4mm,bend left=40,dark-green](8) edge (13);
\draw[line width=0.4mm,bend right=40,dark-green](8) edge (16);

%Quiver vertices
\tkzDefPoint(0,0){a}
\tkzDefPoint(1.4,0){b}
\tkzDefPoint(-1.4,0){c}
\tkzDefPoint(0,1.4){d}
\tkzDefPoint(0,-1.4){e}
\tkzDrawPoint[size=4,fill=black,color=black](a)
\tkzDrawPoints[rectangle,size=4,fill=blue,color=blue](b,c)
\tkzDrawPoints[cross,size=4,fill=orange,color=orange,line width=0.8mm](d,e)

%Quiver arrows
\draw[->, >= angle 60,line width=0.6mm,black](b) -- (a);
\draw[->, >= angle 60,line width=0.6mm,black](c) -- (a);
\draw[->, >= angle 60,line width=0.6mm,black](a) -- (d);
\draw[->, >= angle 60,line width=0.6mm,black](a) -- (e);

%Quiver relations
\tkzDefPoint(0.5,0){B}
\tkzDefPoint(0,0.5){D}
\tkzDefPoint(-0.5,0){C}
\tkzDefPoint(0,-0.5){E}
\tkzDrawArc[line width=0.6mm,black,dashed](a,B)(D)
\tkzDrawArc[line width=0.6mm,black,dashed](a,C)(E)

\end{tikzpicture}

\caption{\label{fig:twosurfquiver} Quivers with relations arising from surface in \cref{fig:dissections}}
\end{figure} 

Firstly, in the rightmost example, we have to identify the blue square points and the orange cross points. This implies that the quiver associated to this dissection is exactly the gentle quiver $(Q_3,R_3)$.
$$\begin{tikzpicture}[->]
  \node (a) at (0,0) {${\color{orange}{1}}$};
  \node (b) at (2,0) {$2$};
  \node (c) at (4,0) {${\color{blue}{3}}$};
  \draw ([yshift=1mm]b.west)--node[above]{$\beta_{1}$}([yshift=1mm]a.east);
  \draw  ([yshift=-1mm]b.west)--node[below]{$\alpha_{1}$}([yshift=-1mm]a.east);
    \draw ([yshift=1mm]c.west)--node[above]{$\alpha_{2}$}([yshift=1mm]b.east);
  \draw  ([yshift=-1mm]c.west)--node[below]{$\beta_{2}$}([yshift=-1mm]b.east);
  \draw[dashed,-] ([xshift=.4cm]b.north) arc[start angle = 0, end angle = 180, x radius=.4cm, y radius =.2cm];
  \draw[dashed,-] ([xshift=-.4cm]b.south) arc[start angle = 180, end angle = 360, x radius=.4cm, y radius =.2cm];
  
\end {tikzpicture}
 $$

Secondly, note that we can have configurations which do not give us a gentle algebra, like for the leftmost example. In fact, any quiver with relations $(Q(\Delta^{\gpoint}), R(\Delta^{\gpoint}))$ containing an oriented cycle in $Q(\Delta^{\gpoint})$ without any of its length two paths in $R(\Delta^{\gpoint})$ defines an infinite-dimensional algebra, which by definition is not gentle.
This case arises precisely when there is a green marked point in the interior. The following result confirms that, if we consider surfaces with dissection $\Delta^{\gpoint}$ with no green marked points in the interior, then we produce all gentle algebras, and only gentle algebras.
\end{example}

\begin{thm}[\cite{BCS, OPS, PPP}] $ $

 \begin{enumerate}
  \item If~$\Delta^{\gpoint}$ is a~$\gpoint$-dissection of the marked surface~$(S,M)$ such that there are no green marked points in the interior of $S$, then~$\left(Q(\Delta^{\gpoint}), R(\Delta^{\gpoint})\right)$ is a gentle quiver.
  
  \item For any gentle quiver ~$(Q,R)$, there exists a marked surface~$(S,M)$ without green marked point in the interior of $S$ and with a~$\gpoint$-dissection~$\Delta^{\gpoint}$ such that~$(Q,R)$ is isomorphic to~$\left(Q(\Delta^{\gpoint}), R(\Delta^{\gpoint})\right)$.  Moreover,~$(S,M)$ and~$\Delta^{\gpoint}$ are unique up to oriented homeomorphism of surfaces and homotopy of arcs.
 \end{enumerate}
\end{thm}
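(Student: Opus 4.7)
The plan is to handle the two parts by local and combinatorial arguments, then (for uniqueness) to propagate the local data to a global homeomorphism.

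For part~(1), I would proceed by a purely local verification of the axioms of gentleness. Fix an arc $i \in \Delta^{\gpoint}$. Its two endpoints lie in $M_{\gpoint}$, which by hypothesis all lie on $\partial S$. At such an endpoint~$v$, the boundary of $S$ splits the arcs ending at~$v$ into a \emph{linear} (rather than cyclic) sequence in the counterclockwise order. Consequently $i$ has at most one arc immediately before and one immediately after it at~$v$, yielding at most two incoming and two outgoing arrows at the vertex $i$ of $Q(\Delta^{\gpoint})$. To verify the refined gentle conditions, note that consecutive arcs around~$v$ either bound a cell containing a red point (in which case the construction declares the corresponding composition to be a relation) or bound a region that abuts a boundary segment $\gpoint\!-\!\rpoint$ on $\partial S$ (in which case no relation is imposed). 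Running through the four local cases shows the ``at most one with / at most one without a relation'' conditions hold. The oriented-cycle condition requires slightly more work: any oriented cycle $\alpha_r\cdots \alpha_1$ in $Q(\Delta^{\gpoint})$ corresponds to a closed counterclockwise walk around cells; if no relation appeared among its consecutive arrows, this walk would loop around a common green vertex in $S^{\circ}$, contradicting the hypothesis.

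For part~(2), existence, I would build $(S,M)$ and $\Delta^{\gpoint}$ by gluing cells. First, define a \emph{forbidden thread} in $(Q,R)$ to be a maximal (possibly closed) path all of whose consecutive pairs of arrows lie in~$R$; the gentle axioms ensure these threads are well-defined and partition a neighborhood of each vertex into a finite number of ``red sectors''. For each forbidden thread of length $\ell$ I would take a topological polygon with $\ell+1$ sides, mark one point of $M_{\rpoint}$ in its interior, and label the sides by the vertices of $Q$ visited by the thread. Next I would glue these polygons together: for every vertex $i$ of $Q$, identify the two sides labeled~$i$ in an orientation-reversing manner consistent with the direction of the corresponding arrows. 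The second and third gentle axioms guarantee that at each common endpoint of identified sides, the identifications match up to give a well-defined marked surface with $\gpoint$-dissection. Finally, reconstructing $(Q(\Delta^{\gpoint}), R(\Delta^{\gpoint}))$ from this data recovers $(Q,R)$ by design: outgoing arcs around each green point correspond to the alternation of forbidden threads, and relations correspond to consecutive sides of the same polygon.

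For uniqueness, suppose $(S,M,\Delta^{\gpoint})$ and $(S',M',\Delta'^{\gpoint})$ both give rise to $(Q,R)$. The quiver isomorphism induces a bijection between the arcs of the two dissections, and then a bijection between cells (since cells correspond intrinsically to the forbidden threads of $(Q,R)$). Each cell is a polygon with one marked red point in the interior, hence determined up to orientation-preserving homeomorphism. Glue these homeomorphisms across shared arcs; the compatibility conditions at each green marked point follow from the local matching of counterclockwise orderings already used in part~(1). The homotopy class of each arc is preserved by construction, completing the uniqueness statement.

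The main obstacle is the gluing step in part~(2): showing the result is a \emph{bona fide} oriented surface rather than a more singular complex, and verifying that the no-interior-$\gpoint$-marked-points condition is automatic. For the former, one must check that the link of every identified vertex is a single interval or circle (not several), which comes down to the bijection between ``rotation classes'' of arrows at a vertex of $Q$ and the sides meeting at a given green marked point—precisely what the gentle conditions were designed to encode. For the latter, the absence of interior green points is enforced by the third gentle axiom (every oriented cycle contains a relation), since an interior green point would produce a boundary-free cyclic rotation of arrows with no relations, contradicting the axiom.
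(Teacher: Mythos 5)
This theorem is not proved in the paper at all: it is quoted from \cite{BCS,OPS,PPP}, and the only in-paper content is a pointer to \cite[Section 4.2]{PPP} for the explicit recipe realizing part (2). So there is no internal proof to compare against; your sketch is a reconstruction of the argument from those references, and its architecture (local verification of the gentleness axioms for (1); one polygon per maximal forbidden thread, glued along matching arc-labels, for (2); uniqueness via the intrinsic identification of cells with forbidden threads) is indeed the standard route, including the two checks you correctly single out as the delicate points (that the glued complex is a surface, and that interior $\gpoint$-points are excluded by the no-relation-free-cycle axiom).

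There is, however, one genuine error in the write-up: your criterion for when $\beta\alpha\in R(\Delta^{\gpoint})$. You say that consecutive arcs around a green endpoint $v$ yield a relation when they ``bound a cell containing a red point''; but every cell of a dissection contains exactly one point of $M_{\rpoint}$ by definition, so this criterion is vacuous, and read literally it would make every composition of two arrows based at the same green point a relation. The actual rule, visible in \cref{fig:rules_quiver_dissections}, is the opposite: if $\alpha\colon i\to j$ arises at the endpoint $v$ of the arc $j$ and $\beta\colon j\to k$ arises at the endpoint $w$ of $j$, then $\beta\alpha\in R(\Delta^{\gpoint})$ precisely when $v\neq w$, i.e.\ when $i,j,k$ are three consecutive sides of a single cell; two arrows turning around the \emph{same} green point compose to a nonzero path. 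This matters because the rest of your own argument depends on the correct version. Your treatment of the oriented-cycle axiom uses exactly the fact that a relation-free pair of consecutive arrows shares its green basepoint, so that a relation-free cycle winds once around a single green point, which is impossible at a boundary point (where the arcs are linearly, not cyclically, ordered) and hence forces an interior $\gpoint$-point; under your stated rule this implication reverses and the argument collapses. Once corrected, the local count is clean: the arrow $\alpha$ admits at most one continuation of $j$ at the same endpoint $v$ (the unique $\beta$ with $\beta\alpha\notin R$) and at most one at the other endpoint $w$ (the unique $\gamma$ with $\gamma\alpha\in R$), which is precisely the gentleness axiom, and the same dichotomy is what makes the cells of your glued surface in part (2) correspond to forbidden threads. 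A smaller fix there: a non-cyclic forbidden thread visiting $\ell+1$ vertices should give a polygon with $\ell+2$ sides, the extra side being a boundary segment carrying the $\rpoint$-point of that cell (cyclic forbidden threads give closed cells with an interior red puncture); without that boundary edge the green points of the glued surface would not all land on $\partial S$.
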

An explicit recipe to construct~$(S,M)$ and~$\Delta^{\gpoint}$ from the gentle quiver~$(Q,R)$ is given in~\cite[Section 4.2]{PPP}. 

\subsection{Accordions and bands}
The representations of gentle quivers are related to certain arcs on the associated surface: these are the so-called \newword{accordions}.

Fix a marked surface~$(S,M)$, endowed with a~$\gpoint$-dissection~$\Delta^{\gpoint}$, and let~$(Q(\Delta^{\gpoint})$, $R(\Delta^{\gpoint}))$ be the associated gentle quiver.

A \newword{closed curve} in $S$ is a non-contractible curve contained in the interior of~$S$ avoiding the marked points of $M$; formally, it is a non-contractible continuous map from the circle~$S^1$ to~$S\setminus (\partial S \cup M)$. 
We say that a closed curve
is \newword{simple} if it does not intersect itself.
A closed curve~$\gamma$ is \newword{primitive} if there does not exist a closed curve~$\gamma_0$ such that~$\gamma$ is the concatenation of~$\gamma_0$ with itself~$m\geq 2$ times (up to homotopy).

\begin{definition}\label{def:closedAccordion}
A \newword{closed accordion} for~$(S,M,\Delta^{\gpoint})$ is a primitive closed curve~$\gamma$ on~$S$ which satisfies the following restrictions: Whenever~$\gamma$ enters a cell of~$\Delta^{\gpoint}$ by crossing an arc~$\alpha$,
\begin{enumerate}[(a)]
 \item it leaves the cell by crossing an arc~$\beta$ adjacent to~$\alpha$;
 \item the relevant segments of the arcs~$\alpha$,~$\beta$ and~$\gamma$ bound a disk that does not contain the unique marked point in~$M_{\rpoint}$ belonging to the cell.
\end{enumerate}
See \cref{fig:rules_closedaccord} for a picture illustrating the rules $(a)$ and $(b)$. 
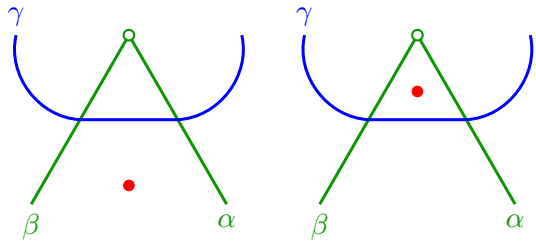
\begin{figure}[h!] 
\begin{center}
\begin{tikzpicture}[mydot/.style={
    circle,
    thick,
    fill=white,
    draw,
    outer sep=0.5pt,
    inner sep=1pt
  }]
\tkzDefPoint(0,0){O}\tkzDefPoint(0,1.5){1}
\tkzDefPointsBy[rotation=center O angle 120](1,2){2,3}
\tkzDefMidPoint(1,2)
\tkzGetPoint{i}
\tkzDefMidPoint(1,3)
\tkzGetPoint{j}
\tkzDefPoint(-1.5,1.5){4}
\tkzDefPoint(1.5,1.5){5}

\draw[line width=0.4mm,dark-green](1) edge (2);
\draw[line width=0.4mm,dark-green](1) edge (3);
\draw[line width=0.4mm,blue] (i) edge (j);
\draw[line width=0.4mm, bend right=50,blue] (4) edge (i);
\draw[line width=0.4mm,bend left=50,blue] (5) edge (j);

\tkzDrawPoints[size=4,color=dark-green,mydot](1);

\tkzDefPoint(0,-0.5){6}
\tkzDrawPoints[size=4,color=red](6);

\tkzLabelPoint[above](4){{\large $\color{blue}{\gamma}$}}
\tkzLabelPoint[below](3){{\large $\color{dark-green}{\alpha}$}}
\tkzLabelPoint[below](2){{\large $\color{dark-green}{\beta}$}}
\end{tikzpicture} \quad
\begin{tikzpicture}[mydot/.style={
    circle,
    thick,
    fill=white,
    draw,
    outer sep=0.5pt,
    inner sep=1pt
  }]
\tkzDefPoint(0,0){O}\tkzDefPoint(0,1.5){1}
\tkzDefPointsBy[rotation=center O angle 120](1,2){2,3}
\tkzDefMidPoint(1,2)
\tkzGetPoint{i}
\tkzDefMidPoint(1,3)
\tkzGetPoint{j}
\tkzDefPoint(-1.5,1.5){4}
\tkzDefPoint(1.5,1.5){5}

\draw[line width=0.4mm,dark-green](1) edge (2);
\draw[line width=0.4mm,dark-green](1) edge (3);
\draw[line width=0.4mm,blue] (i) edge (j);
\draw[line width=0.4mm, bend right=50,blue] (4) edge (i);
\draw[line width=0.4mm,bend left=50,blue] (5) edge (j);

\tkzDrawPoints[size=4,color=dark-green,mydot](1);

\tkzDefPoint(0,0.75){6}
\tkzDrawPoints[size=4,color=red](6);

\tkzLabelPoint[above](4){{\large $\color{blue}{\gamma}$}}
\tkzLabelPoint[below](3){{\large $\color{dark-green}{\alpha}$}}
\tkzLabelPoint[below](2){{\large $\color{dark-green}{\beta}$}}
\end{tikzpicture} \caption{\label{fig:rules_closedaccord} Drawings representing the rules that an accordion must satisfy: on the left, $\gamma$ satisfies the rule $(a)$ and $(b)$; on the right, $\gamma$ does not satisfy the rule $(b)$.} \end{center} \end{figure}
\end{definition}

\begin{example} \label{exam:closedaccord} See \cref{fig:closedaccord} for an example of a closed accordion over a torus. 
\begin{figure}[h!]
\centering
\begin{tikzpicture}[mydot/.style={
    circle,
    thick,
    fill=white,
    draw,
    outer sep=0.5pt,
    inner sep=1pt
  }, scale = 1]
\tkzDefPoint(0,0){O}\tkzDefPoint(1.4,1.4){1}
\tkzDefPointsBy[rotation=center O angle 90](1,2,3){2,3,4}
\tkzDefPoint(0.9,0.9){5}
\tkzDefPointsBy[rotation=center O angle 90](5,6,7){6,7,8}

\tkzDrawSquare[line width=0.5mm](1,2)
\tkzDefPointsBy[rotation=center 1 angle 45](5){9}
\tkzDrawSector[rotate,line width=0.5mm, black, fill = gray!40](1,9)(-90)
\tkzDefPointsBy[rotation=center 2 angle 45](6){10}
\tkzDrawSector[rotate,line width=0.5mm, black, fill = gray!40](2,10)(-90)
\tkzDefPointsBy[rotation=center 3 angle 45](7){11}
\tkzDrawSector[rotate,line width=0.5mm, black, fill = gray!40](3,11)(-90)
\tkzDefPointsBy[rotation=center 4 angle 45](8){12}
\tkzDrawSector[rotate,line width=0.5mm, black, fill = gray!40](4,12)(-90)

\tkzDefPoint(1.4,0){13}
\tkzDefPoint(-1.4,0){14}
\tkzDefPoint(0,1.4){15}
\tkzDefPoint(0,-1.4){16}

\tkzDrawPoints[fill =red,size=4,color=red](5,7)
\tkzDrawPoints[size=4,color=dark-green,mydot](6,8)
\draw[line width=0.4mm,dark-green](6) edge (8);
\draw[line width=0.4mm,bend left=40,dark-green](6) edge (14);
\draw[line width=0.4mm,bend right=40,dark-green](6) edge (15);
\draw[line width=0.4mm,bend left=40,dark-green](8) edge (13);
\draw[line width=0.4mm,bend right=40,dark-green](8) edge (16);

\draw[line width=0.6mm,blue](-1.4,-0.5) edge (1.4,0.5);
\draw[line width=0.6mm,bend right=40,blue](-1.4,0.5) edge (0,1.4);
\draw[line width=0.6mm,bend left=40,blue](0,-1.4) edge (1.4,-0.5);
\end{tikzpicture}

\caption{\label{fig:closedaccord} An example of a closed accordion (blue) over a \mbox{$\gpoint$-dissection} of the torus, see \cref{exam::three-surfaces}.}
\end{figure}
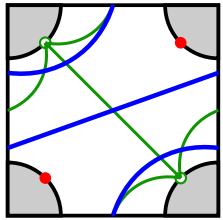
\end{example}

\begin{proposition}
 The map sending a band to the closed curve on~$S$ that follows the arrows of~$Q(\Delta^{\gpoint})$ drawn on~$S$ induces a bijection: 
 \begin{center}
 $\xymatrix{\{\text{\emph{bands for }} (Q(\Delta^{\gpoint}),R(\Delta^{\gpoint}))\} \ar@{<->}[d]^{1:1}\\
 \{\text{\emph{homotopy classes of simple closed accordions for}}~(S,M,\Delta^{\gpoint})\} }$
 \end{center} 
\end{proposition}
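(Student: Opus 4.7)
The plan is to describe explicit mutually inverse maps between the two sets and verify that they are well defined. Given a band walk $z = \alpha_r\cdots\alpha_1$ with vertex sequence $z_0,z_1,\ldots,z_r=z_0$, each vertex $z_s$ corresponds to an arc in $\Delta^\gpoint$, and each letter $\alpha_s$ (or $\alpha_s^{-1}$) is an arrow of $Q(\Delta^\gpoint)$ between the arcs $z_{s-1}$ and $z_s$; by the construction of $Q(\Delta^\gpoint)$, these two arcs share a green endpoint and bound a common subregion of a cell of $\Delta^\gpoint$. Inside each such cell I would draw a segment of a curve entering across $z_{s-1}$ and leaving across $z_s$, placed on the side of the shared green endpoint that does not contain the unique red marked point of the cell. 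Concatenating these segments cyclically produces a closed curve $\gamma_z$ on $S$.

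Next I would verify that $\gamma_z$ is a closed accordion. Rule~$(a)$ (the curve enters and leaves each cell through adjacent arcs) is exactly the fact that $z_{s-1}$ and $z_s$ are adjacent at a green endpoint, which is built into the existence of the arrow $\alpha_s$ (or its inverse). Rule~$(b)$ (the red point of the cell is not enclosed by the segment together with the two arcs) is precisely the string-walk requirement that neither $\alpha_{s+1}\alpha_s$ nor its inverse lies in $R(\Delta^\gpoint)$: inspection of the local model in \cref{fig:rules_quiver_dissections} shows that relations in $R(\Delta^\gpoint)$ are exactly the two-step subwalks whose cell-segments would be forced to wind around the red marked point. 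Primitivity of the band walk directly corresponds to primitivity of $\gamma_z$ up to homotopy, since a curve factoring as a concatenation would produce a walk factoring as a power. The simplicity of $\gamma_z$ is where the condition that all powers of $z$ are strings enters: any self-intersection of $\gamma_z$ would, after sufficiently many revolutions, translate into a subword of some power $z^N$ containing either a return $\alpha\alpha^{-1}$ or a relation of $R(\Delta^\gpoint)$, contradicting that assumption.

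For the inverse, given a simple closed accordion $\gamma$, I would record the cyclic sequence of arcs it crosses and translate each consecutive crossing into an arrow or inverse arrow of $Q(\Delta^\gpoint)$ via rule~$(a)$; the resulting closed walk is a string walk by rule~$(b)$, primitive because $\gamma$ is primitive, and all of its powers remain strings because the simplicity of $\gamma$ ensures that any finite concatenation still traverses each cell in a disk-bounded way avoiding the red point. To show that this is a two-sided inverse of the first map, I would use that within a cell the topological choice of segment is unique up to isotopy rel boundary once the two arcs of entry and exit, together with the side of the red marked point, are fixed; this reduces both compositions to combinatorial identities between walks and arc-crossing sequences.

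The step I expect to be the main obstacle is the precise equivalence between the algebraic condition that every power of the band walk $z$ is a string and the geometric condition that $\gamma_z$ is simple, and dually the claim that every simple closed accordion lifts to an actual band walk rather than merely a closed primitive string walk. Handling this cleanly requires a careful local topological argument in a neighborhood of a hypothetical self-intersection (or of a cell crossed more than once by $\gamma^N$), tracking which arcs and which sides of which green endpoints are involved, and matching them with a forbidden two-step subword or with a subwalk of the form $\alpha\alpha^{-1}$ in a sufficiently high power of the candidate band.
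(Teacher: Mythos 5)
Your overall strategy --- building the closed curve cell by cell from the band walk, matching the accordion rules $(a)$ and $(b)$ with the string-walk conditions, and inverting by reading off the cyclic sequence of crossings --- is the same as the paper's, which simply cites the string-case argument of~\cite[Proposition 4.23]{PPP} and isolates as the key point that a closed accordion is determined up to homotopy by its sequence of intersections with~$\Delta^{\gpoint}$; your observation that the segment in each cell is unique up to isotopy rel boundary is exactly that point.

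However, your treatment of simplicity contains a genuine error, precisely at the step you yourself flag as the main obstacle. You claim that the band axiom ``all powers of $z$ are strings'' forces $\gamma_z$ to be simple, arguing that a self-intersection would translate into a forbidden subword of some power $z^N$. This is false: that axiom is a purely local condition at the junction where the end of $z$ meets its beginning (it says that the bi-infinite word $z^\infty_\infty$ contains no backtrack $\alpha\alpha^{-1}$ and no relation of $R(\Delta^{\gpoint})$ across the seam), and it imposes no constraint on whether the resulting closed curve globally self-intersects. The paper's own \cref{cor : bands and curves} states that $B_{z,1,\lambda}$ is a brick if and only if the associated closed accordion does not self-intersect; since not every band module with $m=1$ is a brick, there exist bands whose closed accordions cannot be drawn without self-intersection, so the equivalence you assert would contradict the rest of the paper. (The same misattribution appears in your inverse direction: that all powers of the read-off walk are strings follows from the accordion condition holding cyclically at every crossing, including the seam, and has nothing to do with simplicity.) The substantive content of the proposition is the bijection between bands and homotopy classes of primitive closed accordions, determined by their crossing sequences with the dissection; simplicity of the curve is an additional property that characterizes band bricks among bands and should not be derived from the band axioms.
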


\begin{proof}
 The proof is essentially the same as that of~\cite[Proposition 4.23]{PPP}, where the case of strings is considered. The key point is that a closed accordion is uniquely determined up to homotopy by its sequence of points of intersection with the dissection~$\Delta^{\gpoint}$.
\end{proof}

\subsection{The dual dissection} \label{sub::dual}

For any $\gpoint$-dissection~$\Delta^{\gpoint}$, there exists a unique~$\rpoint$-dissection~$\Delta^{\rpoint}$ such that each arc of~$\Delta^{\gpoint}$ intersects exacly one arc of~$\Delta^{\rpoint}$ and vice versa. Specifically, each~$\gpoint$-arc~$\gamma$ in~$\Delta^{\gpoint}$ borders two cells, each of which contains precisely one marked point in~$M_{\rpoint}$. Draw a~$\rpoint$-arc joining those two marked points and crossing~$\gamma$.
The dissections~$\Delta^{\gpoint}$ and~$\Delta^{\rpoint}$ are said to be \newword{dual dissections}.  

As shown in~\cite[Proposition 3.13]{PPP}, closed accordions for~$(S,M,\Delta^{\gpoint})$ can equivalently be viewed as closed slaloms for~$\Delta^{\rpoint}$.

\begin{definition}
 A \newword{closed slalom} is a primitive closed curve~$\gamma$ on~$(S,M)$ such that, for any two cells of~$\Delta^{\rpoint}$ that~$\gamma$ intersects consecutively, their $\gpoint$-marked points in~$M_{\rpoint}$ are on opposite sides of~$\gamma$. 
\end{definition}

We say that curves~$C_1, \ldots, C_r$ are in \newword{minimal position} if they are chosen in their homotopy classes in such a way that~$C_i$ and~$C_j$ intersect transversally and a minimal number of times.  This choice is always possible, as explained for instance in the section {\it multicurves} before Section 1.2.5 of \cite{FM}.
Thus we will always assume that slaloms with respect to a dissection $\Delta^{\rpoint}$ are always drawn so as to intersect $\Delta^{\rpoint}$ as few times as
possible. 

\begin{example}\label{ex:closedslalom}
We can check that the closed accordion given in \cref{exam:closedaccord} for $\Delta^{\gpoint}$ is a closed slalom for $\Delta^{\rpoint}$. 
\begin{figure}[h!]
\centering
\begin{tikzpicture}[mydot/.style={
    circle,
    thick,
    fill=white,
    draw,
    outer sep=0.5pt,
    inner sep=1pt
  }, scale = 1]
\tkzDefPoint(0,0){O}\tkzDefPoint(1.4,1.4){1}
\tkzDefPointsBy[rotation=center O angle 90](1,2,3){2,3,4}
\tkzDefPoint(0.9,0.9){5}
\tkzDefPointsBy[rotation=center O angle 90](5,6,7){6,7,8}

\tkzDrawSquare[line width=0.5mm](1,2)
\tkzDefPointsBy[rotation=center 1 angle 45](5){9}
\tkzDrawSector[rotate,line width=0.5mm, black, fill = gray!40](1,9)(-90)
\tkzDefPointsBy[rotation=center 2 angle 45](6){10}
\tkzDrawSector[rotate,line width=0.5mm, black, fill = gray!40](2,10)(-90)
\tkzDefPointsBy[rotation=center 3 angle 45](7){11}
\tkzDrawSector[rotate,line width=0.5mm, black, fill = gray!40](3,11)(-90)
\tkzDefPointsBy[rotation=center 4 angle 45](8){12}
\tkzDrawSector[rotate,line width=0.5mm, black, fill = gray!40](4,12)(-90)

\tkzDefPoint(1.4,0){13}
\tkzDefPoint(-1.4,0){14}
\tkzDefPoint(0,1.4){15}
\tkzDefPoint(0,-1.4){16}

\tkzDrawPoints[fill =red,size=4,color=red](5,7)
\tkzDrawPoints[size=4,color=dark-green,mydot](6,8)
\draw[line width=0.4mm,dark-green](6) edge (8);
\draw[line width=0.4mm,bend left=40,dark-green](6) edge (14);
\draw[line width=0.4mm,bend right=40,dark-green](6) edge (15);
\draw[line width=0.4mm,bend left=40,dark-green](8) edge (13);
\draw[line width=0.4mm,bend right=40,dark-green](8) edge (16);

\draw[line width=0.4mm,red](5) edge (7);
\draw[line width=0.4mm,bend left=40,red](5) edge (15);
\draw[line width=0.4mm,bend right=40,red](5) edge (13);
\draw[line width=0.4mm,bend right=40,red](7) edge (14);
\draw[line width=0.4mm,bend left=40,red](7) edge (16);

\draw[line width=0.6mm,blue](-1.4,-0.5) edge (1.4,0.5);
\draw[line width=0.6mm,bend right=40,blue](-1.4,0.5) edge (0,1.4);
\draw[line width=0.6mm,bend left=40,blue](0,-1.4) edge (1.4,-0.5);
\end{tikzpicture}

\caption{\label{fig:closedslalom} An example of a closed slalom (blue) over a \mbox{$\rpoint$-dissection} of the torus as already seen in \cref{exam::three-surfaces}.}
\end{figure}
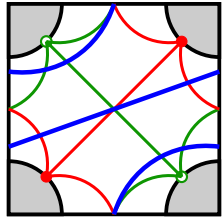
\end{example}

\begin{proposition}[Proposition 3.13 in \cite{PPP}]
 A closed curve on~$(S,M)$ is a closed accordion for~$\Delta^{\gpoint}$ if and only if it is a closed slalom for~$\Delta^{\rpoint}$.
\end{proposition}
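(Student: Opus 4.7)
The strategy is to carry out a local analysis at each intersection of the closed curve with the dissection, and translate between the two dual viewpoints. This follows the same blueprint as \cite[Proposition 3.13]{PPP}, where the analogous statement is proved for arcs rather than closed curves; the closed-curve case requires essentially no new ideas, only that the curve closes up consistently.

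First, I would fix $\gamma$ in its homotopy class so that it is in minimal position with respect to both $\Delta^{\gpoint}$ and $\Delta^{\rpoint}$ simultaneously. Because each arc of $\Delta^{\gpoint}$ meets exactly one arc of $\Delta^{\rpoint}$, and at a single point, one can further isotope $\gamma$ so that each of its crossings with the dissection takes place in a small disc neighborhood of the corresponding green--red intersection point. Consequently the crossings of $\gamma$ with $\Delta^{\gpoint}$ are in natural bijection with its crossings with $\Delta^{\rpoint}$, and a local disc around each such intersection is divided by segments of the green arc $e$ and its dual $e^*$ into four quadrants: two of them correspond to the two green cells sharing $e$ (separated by $e^*$), the other two correspond to the two red cells sharing $e^*$ (separated by $e$).

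Next, I would analyze what happens between two consecutive crossings. Between them, $\gamma$ lies in a single green cell $C$, and I claim that the accordion conditions $(a)$ and $(b)$ are together equivalent to the statement that $\gamma$ cuts off the corner of $C$ at a single green vertex $v$, stopping short of the red marked point of $C$. Since the red arcs emanating from that red point split $C$ into quadrilateral regions indexed by the green vertices of $C$, each lying inside the red cell $C_r(v)$ around the corresponding vertex $v$, this is equivalent to the dual statement that the segment of $\gamma$ inside $C$ is contained in a unique red cell $C_r(v)$. Hence, under the accordion hypothesis, the sequence of red cells that $\gamma$ visits is $C_r(v^{(0)}), C_r(v^{(1)}), \ldots$, where $v^{(j)}$ is the common green vertex of the two consecutive green arcs $e^{(j)}$ and $e^{(j+1)}$ crossed by $\gamma$.

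To conclude, I would check the slalom condition at each transition: the marked points $v^{(j-1)}$ and $v^{(j)}$ of two consecutively visited red cells lie on opposite sides of $\gamma$. This is automatic because these two points are precisely the endpoints of the green arc $e^{(j)}$ that $\gamma$ crosses transversally at the $j$-th crossing, so they lie on opposite sides of $\gamma$ locally. The same local picture, read in the reverse direction, shows that a closed slalom for $\Delta^{\rpoint}$ automatically satisfies both accordion conditions: the slalom constraint forces the two green arcs crossed on entry into and exit from $C_r(v)$ to be consecutive at $v$, hence to border a common green cell whose red marked point sits on the side of $\gamma$ opposite to $v$. The main obstacle I expect is purely bookkeeping: matching up the four quadrants and the ``side of $\gamma$'' conventions consistently so that ``the bounded disc avoids the red point'' in the accordion definition translates exactly to ``the green marked points are on opposite sides of $\gamma$'' in the slalom definition. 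Once this identification is made precise, the equivalence is immediate.
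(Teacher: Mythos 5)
The paper does not actually prove this proposition: it is imported verbatim as \cite[Proposition~3.13]{PPP}, so there is no internal argument to compare yours against. Your sketch is a reasonable reconstruction of the local duality argument that \cite{PPP} uses for arcs, and the adaptation to closed curves is indeed routine, so the overall strategy is sound. One step deserves more care, though: you assert, for an \emph{arbitrary} closed curve in minimal position, that its crossings with~$\Delta^{\gpoint}$ are in natural bijection with its crossings with~$\Delta^{\rpoint}$, with each pair localized near the corresponding green--red intersection point. That is not a property of general curves --- a curve can cross a green arc~$\alpha$ without crossing the dual red arc~$\alpha^*$ nearby (or at all), e.g.\ by entering and leaving a green cell through non-adjacent arcs --- and it is precisely the accordion condition (resp.\ the slalom condition) that forces each green crossing to be matched with a red one. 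Since your subsequent cell-by-cell analysis derives exactly this matching from whichever hypothesis is in force, the argument goes through once you restructure it so that the bijection is a \emph{consequence} of the hypothesis in each direction rather than a preliminary normalization; as you note yourself, the remaining work is the bookkeeping that identifies ``the cut-off disk avoids the $\rpoint$-point'' with ``the $\gpoint$-points of consecutive $\rpoint$-cells lie on opposite sides of~$\gamma$,'' which is the content of the proof in \cite{PPP}.
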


\begin{remark}\label{rema::closed-slalom}
Note that a closed slalom crosses a~$\rpoint$-arc in~$\Delta^{\rpoint}$ precisely when the associated band reaches a deep (i.e. a subword of form~$\beta^{-1}\alpha$) or a peak (i.e. a subword of form~$\alpha\beta^{-1}$).
\end{remark}

\subsection{Band modules and morphisms}\label{ssec:band-morphisms}

In what follows, we will be interested in band modules which are also bricks.
We refer to these as \newword{band bricks}. Before looking at them, we need to recall a result of H.~Krause on morphisms between band modules \cite{Krause} and discuss its interpretation in terms of intersections of curves.

Let~$(Q,R)$ be a gentle quiver.  Let~$w = \alpha_r^{\varepsilon_r} \cdots \alpha_1^{\varepsilon_1}$ be a string  on~$(Q,R)$. A \newword{substring}~$\rho$ of~$w$ is either a string of the form $\displaystyle \rho = \alpha_b^{\varepsilon_b} \cdots \alpha_a^{\varepsilon_a}$ for $0 \leq a \leq  b\leq r$ or a lazy path $\rho = e_{v}$ where $v$ is a vertex in the support of $w$.  We say that~$\rho$ is \newword{at the bottom of~$w$} if
\begin{itemize}
 \item either~$a=1$ or $\varepsilon_{a-1} = 1$, and 
 \item either~$b=r$ or $\varepsilon_{b+1} = -1$.
\end{itemize}
Dually, we say that~$\rho$ is \newword{on top of~$w$} if
\begin{itemize}
 \item either~$a=1$ or~$\varepsilon_{a-1} = -1$, and 
 \item either~$b=r$ or~$\varepsilon_{b+1} = 1$.
\end{itemize}
The terminology is explained by the following diagrams; on the left we represent the cases where $\rho$ is on the top of $w$ and and on the right we show the cases where $\rho$ is at the bottom of $w$. The parts of the pictures enclosed in parentheses are optional.  
\begin{center}
$\displaystyle \xymatrix@C=1em @R=1em{
& & \ar[ld]_{\alpha_{b+1}}  \ar@{~}[rr]^{\mathbf{\rho}} \ar@{~} @<-0.2mm>[rr] \ar@{~} @<+0.2mm>[rr]& &  \ar[rd]^{\alpha_{a}} & & \\
v_r \ar@{~}[r]  & v_{b+1} & & & & v_{a-1}\ar@{~}[r] & v_0 
\save "2,1"."1,3"\POS!C*\frm{(},+L*++!R{}
\restore
\save "2,1"."1,3"\POS!C*\frm{)},+R*++!L{}
\restore
\save "1,5"."2,7"\POS!C*\frm{(},+L*+!R{}
\restore
\save "1,5"."2,7"\POS!C*\frm{)},+R*++!L{}
\restore}  \displaystyle \xymatrix@C=1em @R=1em{
v_r \ar@{~}[r] & v_{b+1} \ar[rd]_{\alpha_{b+1}} & & & & \ar[ld]^{\alpha_{a}} v_{a-1} \ar@{~}[r] & v_0 \\
& &  \ar@{~}[rr]_{\mathbf{\rho}} \ar@{~} @<-0.2mm>[rr] \ar@{~} @<+0.2mm>[rr]& &   & &
\save "1,1"."2,3"\POS!C*\frm{(},+L*++!R{}
\restore
\save "1,1"."2,3"\POS!C*\frm{)},+R*++!L{}
\restore
\save "2,5"."1,7"\POS!C*\frm{(},+L*+!R{}
\restore
\save "2,5"."1,7"\POS!C*\frm{)},+R*++!L{}
\restore}$
\end{center}

\begin{example}\label{exam::topbot} 
 Take the same gentle quiver as in \cref{ex::stringrep}:
 
 $$\begin{tikzpicture}[->]
  \node (a) at (0,0) {$1$};
  \node (b) at (2,0) {$2$};
  \node (c) at (4,0) {$3$};
  \draw ([yshift=1mm]b.west)--node[above]{$\beta_{1}$}([yshift=1mm]a.east);
  \draw  ([yshift=-1mm]b.west)--node[below]{$\alpha_{1}$}([yshift=-1mm]a.east);
    \draw ([yshift=1mm]c.west)--node[above]{$\alpha_{2}$}([yshift=1mm]b.east);
  \draw  ([yshift=-1mm]c.west)--node[below]{$\beta_{2}$}([yshift=-1mm]b.east);
  \draw[dashed,-] ([xshift=.4cm]b.north) arc[start angle = 0, end angle = 180, x radius=.4cm, y radius =.2cm];
  \draw[dashed,-] ([xshift=-.4cm]b.south) arc[start angle = 180, end angle = 360, x radius=.4cm, y radius =.2cm];
  
\end {tikzpicture}
 $$

Consider the string~$w=\alpha_1\alpha_2\beta_2^{-1}\alpha_2\beta_2^{-1}\alpha_2$, as shown in Figure \ref{fig:strins_bot_top}. The left-most occurrence of the substring~$\alpha_2\beta_2^{-1}\alpha_2$ can easily be seen to be on top of~$w$. Similarly, the rightmost occurrence of~$\alpha_2\beta_2^{-1}\alpha_2$ can be seen to be at the bottom of~$w$.

\begin{figure}[h!]
\begin{center} 
$\displaystyle \xymatrix@R=1em@C=1em{
 & & {\color{red}{\mathbf{3}}} \ar@<-0.1ex>@[red][ld] \ar@<0.1ex>@[red][ld] \ar@[red][ld]_{\color{red}{\alpha_2}} \ar@<-0.1ex>@[red][rd] \ar@<0.1ex>@[red][rd] \ar@[red][rd]_{\color{red}{\beta_2}} & & {\color{red}{\mathbf{3}}} \ar@<-0.1ex>@[red][ld] \ar@<0.1ex>@[red][ld] \ar@[red][ld]_{\color{red}{\alpha_2}} \ar[rd]_{\beta_2} &  &  3 \ar[ld]_{\alpha_2} \\
 & {\color{red}{\mathbf{2}}} \ar[ld]_{\alpha_1} & & {\color{red}{\mathbf{2}}}& & 2   & \\
 1 & & & & & &  }$ \qquad
$\displaystyle \xymatrix@R=1em@C=1em{
 & & 3 \ar[ld]_{\alpha_2} \ar[rd]_{\beta_2}   & & {\color{red}{\mathbf{3}}} \ar@<-0.1ex>@[red][ld] \ar@<0.1ex>@[red][ld] \ar@[red][ld]_{\color{red}{\alpha_2}} \ar@<-0.1ex>@[red][rd] \ar@<0.1ex>@[red][rd] \ar@[red][rd]_{\color{red}{\beta_2}} &  &  {\color{red}{\mathbf{3}}} \ar@<-0.1ex>@[red][ld] \ar@<0.1ex>@[red][ld] \ar@[red][ld]_{\color{red}{\alpha_2}} \\
 & 2 \ar[ld]_{\alpha_1} & & {\color{red}{\mathbf{2}}}& & {\color{red}{\mathbf{2}}}   & \\
 1 & & & & & &  }$
 \caption{\label{fig:strins_bot_top} A representation of the string $\rho$ and of the string $\sigma = \alpha_2 \beta_2^{-1} \alpha_2$ appearing as a substring on the top (left) and at the bottom (right) of $\rho$.}
 \end{center}
\end{figure}
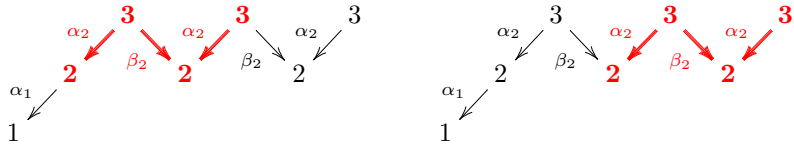

\end{example}

Substrings allow us to visualize the \newword{socle} of a representation.  Recall that the socle of~$M$ is the sum of all simple submodules of~$M$; if~$M$ is a string or band module, then its socle is obtained by considering all the substrings of length zero at the bottom of the defining string or band of~$M$. 

Substrings are also very useful in determining morphisms between string modules and band modules. The case of string modules was determined in \cite{CB}; in this paper, we will only need the result below, which applies to band modules\footnote{The theorem in \cite{Krause} is formulated in terms of ``admissible triples'' which involve morphisms from quivers of type~$A$ or~$\widetilde A$ into~$Q$; in the special case of band modules, we will give an equivalent formulation in terms of substrings of the universal cover of bands.}.  Before stating the theorem, some notations are needed.

Let~$z$ be a band walk.  Let~$z^\infty_\infty$ be the word, infinite on the left and on the right, obtained by concatenation of countably many copies of~$z$.  We view~$z^\infty_\infty$ as a ``universal cover'' of~$z$; in particular, there is an action of~$\bZ$ on the set of finite subwords of $z^\infty_\infty$ by translation.  We will consider subwords of~$z^\infty_\infty$ only up to this~$\bZ$-action.  Define~$\Sigma_{\bt}^\bZ(z)$ and~$\Sigma_{\tp}^\bZ(z)$ to be the set of substrings (up to~$\bZ$-action) at the bottom of~$z^\infty_\infty$ and on top of~$z^\infty_\infty$, respectively.

\begin{thm}[\cite{Krause}]\label{theo::band-morphisms}
 Let~$z,z'$ be two band walks,~$m,m'$ be positive integers and~$\lambda, \lambda'$ be two elements in~$k$.  Consider the band modules~$B_{z,m,\lambda}$ and~$B_{z',m',\lambda'}$ (see Section \ref{ssec::band} for definitions). 
 \begin{enumerate}
  \item\label{non-iso} If~$B_{z,1,\lambda}$ and~$B_{z',1,\lambda'}$ are not isomorphic, then a basis of the space of morphisms~$\Hom\left(B_{z,m,\lambda}, B_{z',m',\lambda'}\right)$ is in bijection with the set
  \[
   \left\{ (v,v') \in \Sigma_{\tp}^\bZ(z) \times \Sigma_{\bt}^\bZ(z') \ \big| \ [v]=[v'] \textrm{ or } [\widetilde v] = [v'] \right\} \times V_{m,m'},
  \]
  where~$V_{m,m'}$ is a basis of the space~$\Hom_k(k^{m}, k^{m'})$, whose dimension is~$mm'$, and $\widetilde v$ is the reversal of $v$.
  
  \item If~$B_{z,1,\lambda}$ and~$B_{z',1,\lambda'}$ are isomorphic, then a basis of the space of morphisms~$\Hom\left(B_{z,m,\lambda}, B_{z',m',\lambda'}\right)$ is in bijection to the set in (\ref{non-iso}) union a basis of the space~$\Hom_{k[t]}\left(k[t]/(t^m), k[t]/(t^{m'}) \right)$, whose dimension is~$\min(m,m')$.

 \end{enumerate}
\end{thm}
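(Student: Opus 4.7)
The plan is to deduce this statement from Krause's original result in \cite{Krause} by translating the language of admissible triples into the combinatorial language of substrings of the universal cover. Since the theorem is announced as a reformulation of a known theorem, the proof should be a careful dictionary between the two frameworks, so I would first recall Krause's formulation: a basis of $\Hom(B_{z,m,\lambda}, B_{z',m',\lambda'})$ is indexed by admissible triples, which consist of a common string (or, in the isomorphic case, band) $A$ together with a surjection from $B_{z,m,\lambda}$ onto a string module $M_A$ sitting at the top, and an injection from $M_A$ into $B_{z',m',\lambda'}$ sitting at the bottom, plus multiplicity data compatible with the Jordan block structures.

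Next, I would establish the bijection between admissible triples and pairs of matching substrings $(v,v')$. The key dictionary is: quotient string modules of $B_{z,m,\lambda}$ correspond precisely to substrings on top of the universal cover $z^\infty_\infty$, because such substrings describe exactly those pieces that can be "peeled off" as a factor (the conditions at the two ends of a top substring translate into the fact that the relevant basis vectors indexed by $\epsilon^p_s$ are not hit by any arrow inside the chosen piece). Dually, string submodules of $B_{z',m',\lambda'}$ correspond to substrings at the bottom of $(z')^\infty_\infty$, the bottom condition encoding that each basis vector generates its own simple summand in the socle of the piece. The $\bZ$-action on substrings is exactly the ambiguity coming from the cyclicity of the band: translating the chosen piece by one period of $z$ or $z'$ produces the same quotient or submodule of the band module. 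The reversal condition $[\widetilde v] = [v']$ reflects the isomorphism $B_{z,m,\lambda} \cong B_{z^{-1},m,\lambda^{-1}}$ recorded in Section \ref{ssec::band}, so matching a top substring of $z$ with the reversal of a bottom substring of $z'$ gives the same admissible triple.

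For each matching pair $(v,v')$, I would then identify the contribution to $\Hom$. After fixing such a common string piece, the remaining freedom consists of a linear map between the multiplicity vector spaces $k^m$ and $k^{m'}$ attached uniformly along the bands, yielding the factor $V_{m,m'}$ of dimension $mm'$. This accounts for the whole of the $\Hom$-space in case \eqref{non-iso}, since Krause's theorem forbids any further morphism when $B_{z,1,\lambda} \not\cong B_{z',1,\lambda'}$: every morphism must factor through a finite-dimensional string module. In case (2), when $B_{z,1,\lambda} \cong B_{z',1,\lambda'}$, one obtains additional morphisms that do not factor through a proper string module; these come from the endomorphism ring of the tube (equivalently, the $k[t]$-module structure induced by the Jordan blocks) and form a subspace naturally identified with $\Hom_{k[t]}(k[t]/(t^m), k[t]/(t^{m'}))$, of dimension $\min(m,m')$.

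The main obstacle is verifying the bijection between admissible triples and substring pairs with full rigor, and in particular keeping track of the correct equivalence relations: the $\bZ$-action from the universal cover, the involution from reversing a walk, and the subtle case when $z$ is conjugate to its own inverse (where some substrings may be counted once rather than twice). A secondary care point is excluding spurious matches that would correspond to morphisms mapping into the radical and then being killed, which the top/bottom conditions on $v$ and $v'$ are precisely designed to prevent.
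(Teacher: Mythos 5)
The paper does not actually prove this statement: it is attributed directly to \cite{Krause}, and the only justification offered is the footnote explaining that Krause's theorem is phrased in terms of ``admissible triples'' and that the version stated here is an equivalent reformulation specialized to band modules. Your proposal is, in effect, an expansion of that footnote into a proof sketch, so there is no conflict of approach --- you are supplying the translation that the authors take for granted. Your dictionary is the right one: top substrings of $z^\infty_\infty$ correspond to string quotients of $B_{z,m,\lambda}$, bottom substrings of $(z')^\infty_\infty$ to string submodules of $B_{z',m',\lambda'}$, the $\bZ$-action absorbs the periodicity of the band, the reversal condition $[\widetilde v]=[v']$ accounts for $M_v\cong M_{\widetilde v}$, the factor $V_{m,m'}$ records the freedom in the map between multiplicity spaces, and the extra $\min(m,m')$-dimensional summand in the isomorphic case is exactly the $\Hom$-space inside the homogeneous tube, i.e.\ $\Hom_{k[t]}\bigl(k[t]/(t^m),k[t]/(t^{m'})\bigr)$.

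That said, be aware that what you have written is a plan rather than a proof: the central claim --- that admissible triples in Krause's sense are in bijection with pairs $(v,v')$ of matching top/bottom substrings up to the $\bZ$-action and reversal, with no over- or under-counting --- is asserted and then listed as ``the main obstacle'' rather than verified. The delicate points you would need to check are precisely the ones you flag (the equivalence relations, the case where $z$ is conjugate to its inverse), plus one you mention only implicitly: for $m,m'>1$ a lift of $v$ in the universal cover may cross the ``seam'' where the Jordan block sits, and one must confirm that the corresponding quotient of $B_{z,m,\lambda}$ is still $M_v\otimes k^m$ so that the contribution is genuinely $mm'$-dimensional. Since the paper itself defers all of this to \cite{Krause}, your sketch is at an appropriate level of detail for the role the statement plays here, but it should be presented as a reduction to Krause's theorem, not as a self-contained argument.
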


The set appearing in Theorem~\ref{theo::band-morphisms} has a nice interpretation in terms of crossings of curves.  

\begin{lemma}\label{lem::crossings-are-morphisms}
 Let~$(Q,R)$ be a gentle quiver with corresponding dissected surface~$(S,M,\Delta^{\gpoint})$.  Let~$z$ and~$z'$ be bands on~$(Q,R)$ with corresponding closed accordions~$\gamma$ and~$\gamma'$ on~$(S,M,\Delta^{\gpoint})$.  Assume that~$\gamma$ and~$\gamma'$ are in a minimal position. Let
  \[
   K(z,z') := \left\{ (v,v') \in \Sigma_{\tp}^\bZ(z) \times \Sigma_{\bt}^\bZ(z') \ \big| \ [v]=[v'] \textrm{ or } [\widetilde v] = [v'] \right\}.
  \]
  Then the cardinality of the set~$K(z,z') \sqcup K(z',z)$ is the number of crossings between the curves~$\gamma$ and~$\gamma'$.
\end{lemma}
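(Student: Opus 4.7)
The plan is to interpret both sides cell-by-cell using the dual slalom picture of \cref{sub::dual}, and to set up an explicit bijection between crossings of $\gamma$ and $\gamma'$ and elements of $K(z,z') \sqcup K(z',z)$. First I would translate the problem to the $\rpoint$-dissection: by \cref{rema::closed-slalom}, a closed slalom crosses a $\rpoint$-arc exactly when the underlying band reaches a peak $\alpha\beta^{-1}$ or a deep $\beta^{-1}\alpha$, and, reading the accordion $\gamma$ through $\Delta^{\gpoint}$, its sequence of crossings with $\Delta^{\gpoint}$ records $z$ up to cyclic rotation. A crossing of a $\rpoint$-arc of $\Delta^{\rpoint}$ then corresponds to a change of side with respect to the red marked point of the neighbouring cells.

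Next, I would analyze crossings locally. Since $\gamma$ and $\gamma'$ are in minimal position, every intersection point is transverse, lies inside some cell $C$ of $\Delta^{\gpoint}$, and cannot be removed by homotopy. Inside such a cell (a polygon with a single red marked point $\rpoint_C$), rule (b) in \cref{def:closedAccordion} forces each curve to stay on a prescribed side of $\rpoint_C$, so two curves can cross inside $C$ only if they traverse $C$ on opposite sides. Starting from a transverse crossing inside $C$, I would follow $\gamma$ and $\gamma'$ forwards and backwards simultaneously: as long as they both cross the same $\gpoint$-arc with the opposite-side configuration, they are forced to continue travelling through the same sequence of cells. The process terminates at a first $\gpoint$-arc where only one of the two curves turns, yielding a maximal common segment. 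This segment corresponds to a substring $v$ of $z$ that is on top (bounded by arrows of the walk leaving it upward toward the red points) and a substring $v'$ of $z'$ that is at the bottom, with $[v]=[v']$ or $[\widetilde v]=[v']$ depending on whether the curves traverse the common arcs in the same or opposite direction.

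Conversely, given $(v,v') \in K(z,z') \sqcup K(z',z)$, the top/bottom condition on $v$ and $v'$ guarantees that the corresponding segments of $\gamma$ and $\gamma'$ lie on opposite sides of the red points along the common subsequence of cells. The boundary conditions in the definitions of top and bottom (the extra arrows $\alpha_{a-1}^{\pm 1}$ and $\alpha_{b+1}^{\pm 1}$) force the two segments to fall off the common region on opposite sides at both endpoints, so that they must cross exactly once in the corresponding strip by an elementary transversality argument. Swapping the roles of $z$ and $z'$ accounts for the two possible choices of which of $\gamma, \gamma'$ lies ``above'' the other along the common segment, which is precisely why $K(z,z')$ and $K(z',z)$ appear in a disjoint union.

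The main obstacle I expect is the careful bookkeeping with the $\bZ$-action on the universal cover $z^\infty_\infty$ and the reversal involution $v \mapsto \widetilde v$: one must check that each crossing produces exactly one $\bZ$-orbit of pairs, and conversely that distinct orbits produce distinct crossings, so that the disjointness of the union $K(z,z') \sqcup K(z',z)$ matches the transversality of the crossing points. Minimality of the position of $\gamma$ and $\gamma'$ is what rules out spurious bigons and guarantees that the common segment extracted from a crossing is both maximal and uniquely determined by that crossing.
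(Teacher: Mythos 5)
Your outline is a correct reconstruction of the crossings-versus-substrings bijection, and it is essentially the argument the paper relies on: the paper's own proof is a one-sentence citation of \cite[Lemma 4.24]{PPP}, where exactly this local analysis (maximal common segments of accordions through cells, matched with substrings on top of one band and at the bottom of the other) is carried out for arcs, together with the remark that it applies without change to closed curves because a closed accordion is determined up to homotopy by its sequence of intersections with~$\Delta^{\gpoint}$. So you are taking the same route; you are simply writing out the details (including the $\bZ$-action bookkeeping needed for the periodic word $z^\infty_\infty$) that the paper outsources to the cited reference.
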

\begin{proof}
 The proof of~\cite[Lemma 4.24]{PPP}, done therein for arcs in~$S$ but applicable without change to closed curves, provides a bijection between the set of crossings between~$\gamma$ and~$\gamma'$ and the set~$K(z,z')\sqcup K(z',z)$.
\end{proof}

\begin{corollary} \label{cor : bands and curves}
 Let~$z$ be a band with a corresponding closed accordion~$\gamma$.  The band module~$B_{z,m,\lambda}$ is a brick if and only if~$m=1$ and~$\gamma$ does not intersect itself.
\end{corollary}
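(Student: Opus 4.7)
The plan is to reduce the algebraic condition $\End(B_{z,m,\lambda})\cong k$ to a counting formula via \cref{theo::band-morphisms}, and then translate that count into the geometric self-intersection statement via \cref{lem::crossings-are-morphisms}.

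First, I would apply part (2) of \cref{theo::band-morphisms} with $z'=z$, $m'=m$ and $\lambda'=\lambda$, since trivially $B_{z,1,\lambda}\cong B_{z,1,\lambda}$. Reading off the cardinality of the indexing set on the right-hand side, and using that $\dim_k V_{m,m}=m^2$ and $\dim_k\Hom_{k[t]}(k[t]/(t^m),k[t]/(t^m))=m$, this gives the dimension formula
\[
\dim_k\End(B_{z,m,\lambda})\;=\;m^2\,|K(z,z)|\;+\;m.
\]
The brick condition is that this dimension equals $1$. Since both summands are non-negative integers and the second is $m\geq 1$, equality forces $m=1$ and $|K(z,z)|=0$; conversely these two conditions together yield dimension $1$. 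Hence $B_{z,m,\lambda}$ is a brick if and only if $m=1$ and $K(z,z)=\emptyset$.

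Second, I would translate $K(z,z)=\emptyset$ into a geometric condition on $\gamma$. By \cref{lem::crossings-are-morphisms}, applied with $z'=z$ and with $\gamma$ chosen in its free homotopy class so as to minimize self-intersections, the disjoint union $K(z,z)\sqcup K(z,z)$ is in bijection with the set of crossings of $\gamma$ with itself. In particular $K(z,z)=\emptyset$ if and only if $\gamma$ has no self-intersection, which combined with the previous paragraph yields the desired equivalence.

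The step that requires the most care is the self-intersection case of \cref{lem::crossings-are-morphisms}: the lemma as stated applies to two curves $\gamma,\gamma'$ placed in minimal position, whereas here we are specializing to $\gamma=\gamma'$. I would check that the bijection from \cite[Lemma 4.24]{PPP}, which constructs each crossing from a pair $(v,v')$ of top/bottom substrings of the universal cover, remains valid when the two bands coincide, provided one chooses a representative of the free homotopy class of $\gamma$ with a minimal number of transverse self-intersections; the two copies of $K(z,z)$ then correspond to the two local branches at each self-crossing. Once this is settled, the chain of equivalences $B_{z,m,\lambda}\text{ is a brick}\iff m=1\text{ and }|K(z,z)|=0\iff m=1\text{ and }\gamma\text{ is simple}$ closes the proof.
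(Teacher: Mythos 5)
Your argument is correct and is exactly the route the paper intends: the corollary is stated there without proof, as an immediate consequence of combining \cref{theo::band-morphisms}(2) (giving $\dim_k\End(B_{z,m,\lambda})=m^2|K(z,z)|+m$) with \cref{lem::crossings-are-morphisms} applied to $z'=z$. Your extra care about the self-intersection case of \cref{lem::crossings-are-morphisms} is a reasonable point to flag, but it does not change the approach.
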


\begin{example}\label{ex::Corbandscurves} Let us consider again the following gentle quiver.

 $$\begin{tikzpicture}[->]
  \node (a) at (0,0) {$1$};
  \node (b) at (2,0) {$2$};
  \node (c) at (4,0) {$3$};
  \draw ([yshift=1mm]b.west)--node[above]{$\beta_{1}$}([yshift=1mm]a.east);
  \draw  ([yshift=-1mm]b.west)--node[below]{$\alpha_{1}$}([yshift=-1mm]a.east);
    \draw ([yshift=1mm]c.west)--node[above]{$\alpha_{2}$}([yshift=1mm]b.east);
  \draw  ([yshift=-1mm]c.west)--node[below]{$\beta_{2}$}([yshift=-1mm]b.east);
  \draw[dashed,-] ([xshift=.4cm]b.north) arc[start angle = 0, end angle = 180, x radius=.4cm, y radius =.2cm];
  \draw[dashed,-] ([xshift=-.4cm]b.south) arc[start angle = 180, end angle = 360, x radius=.4cm, y radius =.2cm];
  
\end {tikzpicture}
 $$
 
Its associated marked surface, equipped with a dualizable $\gpoint$-dissection $(S,M,\Delta^\gpoint)$ is  the torus dissection we already saw in \cref{exam::three-surfaces}.

Now consider the band $\displaystyle z =  \beta_2^{-1}\alpha_2\beta_2^{-1}\beta_1^{-1}\alpha_1\alpha_2$. The associated closed accordion $\gamma$ corresponds to the one drawn in \cref{fig:closedaccord}. Note that $\gamma$ does not intersect itself. Hence following \cref{cor : bands and curves}, $B_{z,1,\lambda}$ is a band brick over $\Lambda_3$ for $\lambda \in k^\times$.

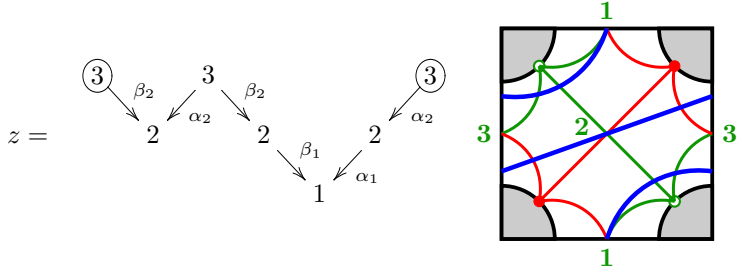
\begin{figure}[h!]
\centering
\begin{minipage}{6cm}
\begin{center}
$\displaystyle \xymatrix@C=1em @R=1em{ & *+[o][F-]{3} \ar[rd]^{\beta_2} & & 3 \ar[dl]^{\alpha_2}\ar[rd]^{\beta_2} & &  & & *+[o][F-]{3} \ar[ld]^{\alpha_2} \\
z = & & 2  & & 2 \ar[rd]^{\beta_1} & & 2\ar[dl]^{\alpha_1} & \\
& & & & & 1 & &} $ 
\end{center}
\end{minipage}
\begin{minipage}{4cm}
\begin{center}
\begin{tikzpicture}[mydot/.style={
    circle,
    thick,
    fill=white,
    draw,
    outer sep=0.5pt,
    inner sep=1pt
  }, scale = 1]
\tkzDefPoint(0,0){O}\tkzDefPoint(1.4,1.4){1}
\tkzDefPointsBy[rotation=center O angle 90](1,2,3){2,3,4}
\tkzDefPoint(0.9,0.9){5}
\tkzDefPointsBy[rotation=center O angle 90](5,6,7){6,7,8}

\tkzDrawSquare[line width=0.5mm](1,2)
\tkzDefPointsBy[rotation=center 1 angle 45](5){9}
\tkzDrawSector[rotate,line width=0.5mm, black, fill = gray!40](1,9)(-90)
\tkzDefPointsBy[rotation=center 2 angle 45](6){10}
\tkzDrawSector[rotate,line width=0.5mm, black, fill = gray!40](2,10)(-90)
\tkzDefPointsBy[rotation=center 3 angle 45](7){11}
\tkzDrawSector[rotate,line width=0.5mm, black, fill = gray!40](3,11)(-90)
\tkzDefPointsBy[rotation=center 4 angle 45](8){12}
\tkzDrawSector[rotate,line width=0.5mm, black, fill = gray!40](4,12)(-90)

\tkzDefPoint(1.4,0){13}
\tkzDefPoint(-1.4,0){14}
\tkzDefPoint(0,1.4){15}
\tkzDefPoint(0,-1.4){16}

\tkzDrawPoints[fill =red,size=4,color=red](5,7)
\tkzDrawPoints[size=4,color=dark-green,mydot](6,8)
\draw[line width=0.4mm,dark-green](6) edge (8);
\draw[line width=0.4mm,bend left=40,dark-green](6) edge (14);
\draw[line width=0.4mm,bend right=40,dark-green](6) edge (15);
\draw[line width=0.4mm,bend left=40,dark-green](8) edge (13);
\draw[line width=0.4mm,bend right=40,dark-green](8) edge (16);

\draw[line width=0.4mm,red](5) edge (7);
\draw[line width=0.4mm,bend left=40,red](5) edge (15);
\draw[line width=0.4mm,bend right=40,red](5) edge (13);
\draw[line width=0.4mm,bend right=40,red](7) edge (14);
\draw[line width=0.4mm,bend left=40,red](7) edge (16);

\draw[line width=0.6mm,blue](-1.4,-0.5) edge (1.4,0.5);
\draw[line width=0.6mm,bend right=40,blue](-1.4,0.5) edge (0,1.4);
\draw[line width=0.6mm,bend left=40,blue](0,-1.4) edge (1.4,-0.5);

\draw node[right] at (1.4,0) {\color{dark-green}{$\mathbf 3$}};
\draw node[left] at (-1.4,0) {\color{dark-green}{$\mathbf 3$}};

\draw node[above] at (0,1.4) {\color{dark-green}{$\mathbf 1$}};
\draw node[below] at (0,-1.4) {\color{dark-green}{$\mathbf 1$}};

\draw node[left] at (-0.1,0.1) {\color{dark-green}{$\mathbf 2$}};

\end{tikzpicture} 
\end{center} 
\end{minipage}
\caption{\label{fig:bandandaccordion} The band walk $z$ (on the left) and the closed accordion $\gamma$ over $(S,M,\Delta^\gpoint)$ (on the right) associated to $z$.}
\end{figure}
\end{example}

\subsection{Euler form}\label{euler}
Let $\Lambda$ be an algebra of finite global dimension $d$ with $n$ iso-classes of simple modules. There is a bilinear form $e( \cdot,\cdot)$
on $\mathbb Z^n$ called the \newword{Euler form} with the
property that for any two $\Lambda$-modules $X,Y$, we have

$$e( \vdim(X),\vdim(Y)) = \sum_{i=0}^d \dim \Ext^i(X,Y).$$ 
Here, $\vdim(X)$ and $\vdim(Y)$ denote the dimension vectors  of $X$ and $Y$.
(Note that it is not obvious that the righthand side of this formula depends only on $\vdim(X)$ and $\vdim(Y)$ rather than on $X$ and $Y$.) See \cite[Section III.3]{ASS} for background on the Euler form.

Since~$\Lambda$ has finite global dimension, one can show that the morphism of free abelian groups
\[
 \vdim : \bigoplus_{i=1}^n \bZ [P_i] \longrightarrow \bZ^n : [P_i] \longmapsto \vdim P_i
\]
is an isomorphism.  We define the bilinear form~$\la \cdot, \cdot \ra$ on~$\bigoplus_{i=1}^n \bZ[P_i] = \bZ^n$ by 
\[
 \la \mathbf{x}, \mathbf{y} \ra = e(\vdim (\mathbf{x}), \vdim (\mathbf{y})).
\]

\begin{lemma}
 If~$X$ and~$Y$ are~$\Lambda$-modules with projective dimension at most~$1$, then~$\la g(X), g(Y) \ra = e(\vdim(X), \vdim(Y))$.
\end{lemma}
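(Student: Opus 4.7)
The plan is to reduce the identity to the observation that, when $X$ has projective dimension at most one, its minimal projective presentation is in fact a short exact sequence, hence the $g$-vector ``recovers'' the dimension vector.

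First, I would unwind the definitions. By hypothesis the minimal projective presentation
\[
 P^{-1}(X) = \bigoplus_{i=1}^n P_i^{\oplus b_i} \longrightarrow P^{0}(X) = \bigoplus_{i=1}^n P_i^{\oplus a_i} \longrightarrow X \longrightarrow 0
\]
actually extends to a short exact sequence $0 \to P^{-1}(X) \to P^{0}(X) \to X \to 0$, because the kernel of $P^{0}(X)\to X$ is projective and its inclusion into $P^{-1}(X)$ is a split mono out of a minimal projective cover of that kernel, forcing equality. Applying this to both $X$ and $Y$, I get analogous resolutions of length at most one.

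Second, I would use the additivity of dimension vectors on short exact sequences. This gives
\[
\vdim(X) = \vdim(P^{0}(X)) - \vdim(P^{-1}(X)) = \sum_{i=1}^n (a_i - b_i)\vdim(P_i).
\]
Viewing $g(X) = \sum_i(a_i - b_i)[P_i]$ as an element of $\bigoplus_i \bZ[P_i]$, this is exactly the statement $\vdim(g(X)) = \vdim(X)$, where on the left $\vdim$ refers to the isomorphism $\bigoplus_i \bZ[P_i] \to \bZ^n$ introduced just before the lemma. The same equality holds for $Y$.

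Finally, plugging these identifications into the definition of the bilinear form yields
\[
 \la g(X), g(Y) \ra = e\bigl(\vdim(g(X)),\vdim(g(Y))\bigr) = e(\vdim X, \vdim Y),
\]
which is the desired equality. There is no real obstacle here: the only place the hypothesis $\mathrm{pd}\leq 1$ enters is in turning the minimal presentation into an exact sequence so that $\vdim$ is additive, and everything else is a formal consequence of the definitions of $g(X)$, of the isomorphism $\vdim$, and of $\la\cdot,\cdot\ra$.
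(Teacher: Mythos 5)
Your proof is correct and follows essentially the same route as the paper: use projective dimension at most one to get a length-one projective resolution, deduce $\vdim(X)=\vdim(g(X))$ by additivity, and conclude from the definition of $\la\cdot,\cdot\ra$. The only difference is that you additionally justify why the minimal presentation is exact, a point the paper leaves implicit.
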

\begin{proof}
 If~$0\to P_1^X \to P_0^X \to X \to 0$ is a projective resolution of~$X$, then $$\vdim(X) = \vdim(P_0^X) - \vdim(P_1^X) = \vdim \left(g(X)\right).$$  The same is true for~$Y$.  So $$\la g(X), g(Y) \ra = e\left(\vdim \left(g(X)\right), \vdim \left(g(Y)\right)\right) = e\left(\vdim(X), \vdim(Y)\right).$$
\end{proof}

Write $e_i$ for the standard basis vector having a 1 in the $i$-th position and zeros elsewhere.
To calculate the Euler form explicitly, it is convenient to note that
$$ \la e_i,e_j\ra = \la g(P_i),g(P_j)\ra = \dim \Hom(P_i,P_j)$$
This last quantity equals the dimension at vertex $i$ of the representation
$P_j$. 

Let us now specialize in the case that $\Lambda$ is a gentle algebra of
finite global dimension. The condition that $\Lambda$ has finite global dimension precisely means that the associated marked surface has no~$\rpoint$-punctures (we continue to assume that it has no~$\gpoint$-punctures).

Moreover, the next result gives a nice description of $\Ext^i(X,Y)$ for $i\geq 1$ whenever $X$ and $Y$ are band modules.

\begin{lemma}\label{lem::ext-is-hom} Let $X$ and $Y$ be band modules. Then $$\dim \Ext^1(X,Y) = \dim \Hom(Y,X)$$ and $\Ext^i(X,Y)=0$ for $i>1$. \end{lemma}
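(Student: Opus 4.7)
The plan is to invoke Auslander--Reiten duality and to exploit two features specific to band modules over gentle algebras coming from unpunctured surfaces: their projective dimension is at most one, and the Auslander--Reiten translate preserves the isomorphism class of a band module.

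First I would show that every band module has projective dimension at most one. Since the marked surface associated with $\Lambda$ has no $\rpoint$-punctures (equivalently, $\Lambda$ has finite global dimension), the minimal projective resolution of a band module $B_{z,m,\lambda}$ coming from the topological model is two-term: the indecomposable projectives appearing in homological degrees $0$ and $1$ are indexed by the arcs of the $\gpoint$-dissection crossed by the corresponding closed accordion, and no further terms arise. This immediately gives $\Ext^i(X,Y)=0$ for all $i\ge 2$.

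For the first Ext, Auslander--Reiten duality provides a functorial isomorphism
\[
\Ext^1(X,Y)\;\cong\;D\,\overline{\Hom}(Y,\tau X),
\]
where $D=\Hom_k(-,k)$ and $\overline{\Hom}$ denotes $\Hom$ modulo morphisms factoring through an injective module. I would then use that, for a gentle algebra associated with an unpunctured surface, band modules lie in homogeneous tubes of the Auslander--Reiten quiver, so that $\tau B_{z,m,\lambda}\cong B_{z,m,\lambda}$; this can be verified directly from the explicit two-term projective resolution constructed above. The problem then reduces to showing $\overline{\Hom}(Y,X)=\Hom(Y,X)$ for band modules $X,Y$.

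The delicate step, and the main obstacle, is this last assertion: no nonzero morphism between band modules factors through an injective. Since the indecomposable injectives of $\Lambda$ are string modules, I would argue combinatorially using the basis of $\Hom(Y,X)$ provided by \cref{theo::band-morphisms}: each basis morphism is indexed by a matching pair of substrings of the universal covers of the two bands, and its image exhibits a ``wrap-around'' feature incompatible with embedding into any finite-dimensional string module. An alternative route that bypasses Auslander--Reiten duality entirely would be to compute $\dim\Ext^1(X,Y)$ directly from the two-term projective resolution, and match the resulting integer with the basis count of $\Hom(Y,X)$ coming from \cref{theo::band-morphisms}, including the isomorphism correction $\min(m,m')$ when $X\cong Y$; this direct comparison is essentially a bookkeeping exercise on matched substrings of $z^\infty_\infty$ and $(z')^\infty_\infty$, and would yield the claimed identity without appealing to the subtleties of $\overline{\Hom}$.
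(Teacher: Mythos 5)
Your overall route coincides with the paper's: projective dimension at most one for band modules (hence vanishing of higher Ext), the Auslander--Reiten formula $\Ext^1(X,Y)\cong D\,\overline{\Hom}(Y,\tau X)$, and $\tau X\cong X$ for a band module. The gap is in the step you yourself flag as ``the delicate step'': passing from $\overline{\Hom}(Y,\tau X)$ to $\Hom(Y,\tau X)$. This requires no analysis of morphisms between band modules at all. It is a completely general homological fact, cited in the paper as \cite[Corollary IV.2.14]{ASS}, that if $\operatorname{pd} X\leq 1$ then $\Hom(I,\tau X)=0$ for every injective module $I$, so that $\overline{\Hom}(Y,\tau X)=\Hom(Y,\tau X)$ for \emph{every} module $Y$. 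Once you have established $\operatorname{pd} X\leq 1$ (which you do, and which in fact holds for band modules over any gentle algebra, independently of the absence of punctures), this step is free.

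The substitute argument you sketch for that step is not only unnecessary but also off-target in its details. The basis morphisms of \cref{theo::band-morphisms} between non-isomorphic band modules are graph maps factoring through the string module $M_v$ attached to the common substring $v$; in particular their images are string modules, so the claim that the image ``exhibits a wrap-around feature incompatible with embedding into any finite-dimensional string module'' is false for exactly the morphisms you need to control. Moreover, a morphism can factor through an injective even when its image is small, so controlling images is not the right invariant; the correct statement is $\Hom(I,\tau X)=0$ for $I$ injective, which is what the cited corollary provides. Your alternative route (computing $\Ext^1$ directly from the two-term presentation and matching the count against Krause's basis of $\Hom(Y,X)$) could in principle be carried out, but it is far from a ``bookkeeping exercise'' and is much heavier than the one-line citation that closes the argument.
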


\begin{proof} The Auslander--Reiten formula says that
  $\Ext^1(X,Y)$ is dual to $\overline{\Hom}(Y,\tau X)$, where
  $\overline{\Hom}(Y,\tau X)$ is the space of morphisms from
  $Y$ to $\tau X$ quotiented by those morphisms which factor through
  an injective module. However, since $X$ is a band module, it is of
  projective dimension 1, and thus $\overline{\Hom}(Y,\tau X)\simeq
  \Hom(Y,\tau X)$ \cite[Corollary IV.2.14]{ASS}.
  Finally, since $X$ is a band module, $\tau X \simeq X$.

  The statement for $\Ext^i(X,Y)$ with $i>1$ is immediate from the fact that $X$ is of projective dimension 1.
  \end{proof}

From Lemma \ref{lem::ext-is-hom}, we deduce the following proposition:

\begin{proposition}\label{prop:euler-hom-hom}  If $\Lambda$ is a gentle algebra of finite global dimension and $X$ and $Y$ are band modules for $\Lambda$, then:

  $$\la g(X),g(Y)\ra = \dim \Hom(X,Y) - \dim \Hom(Y,X).$$
\end{proposition}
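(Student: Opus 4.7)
The plan is to assemble the proposition directly from the two preceding lemmas together with the definition of the Euler form, so no genuinely new argument is required; the task is to check that band modules satisfy the hypotheses needed for each ingredient to apply.

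First I would invoke the lemma just above this proposition: since $X$ and $Y$ are band modules over a gentle algebra, a fundamental fact is that band modules have projective dimension exactly $1$ (the minimal projective presentation of a band module is in fact a projective resolution). Hence the hypothesis ``projective dimension at most $1$'' is satisfied for both $X$ and $Y$, and the lemma yields
\[
\la g(X), g(Y)\ra \;=\; e(\vdim X,\vdim Y).
\]

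Next I would expand the right-hand side using the defining property of the Euler form (taken in its standard alternating-sum formulation):
\[
e(\vdim X,\vdim Y) \;=\; \sum_{i\geq 0}(-1)^i\dim \Ext^i(X,Y).
\]
Because $X$ has projective dimension $1$, all $\Ext^i(X,Y)$ with $i\geq 2$ vanish, so only the $i=0$ and $i=1$ terms survive, giving
\[
e(\vdim X,\vdim Y) \;=\; \dim\Hom(X,Y) - \dim\Ext^1(X,Y).
\]

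Finally I would apply \cref{lem::ext-is-hom}, which identifies $\dim \Ext^1(X,Y)$ with $\dim\Hom(Y,X)$ for band modules $X,Y$ (this is where the special structure of band modules, specifically $\tau X \cong X$, is used). Substituting, we obtain
\[
\la g(X), g(Y)\ra \;=\; \dim\Hom(X,Y) - \dim\Hom(Y,X),
\]
as required. I do not expect any real obstacle here: the proposition is essentially a three-line corollary of the structural results already established, and the only point that deserves a word of justification is the invocation of projective dimension $1$ for band modules, which is classical for gentle algebras of finite global dimension (equivalently, under the standing assumption that the associated surface has no $\rpoint$-punctures).
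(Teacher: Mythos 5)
Your proof is correct and follows exactly the route the paper intends: the paper states the proposition as an immediate consequence of the preceding lemma on $\la g(X),g(Y)\ra = e(\vdim X,\vdim Y)$ for modules of projective dimension at most $1$ together with \cref{lem::ext-is-hom}, which is precisely your assembly. (You also correctly use the alternating-sum form of the Euler characteristic, supplying the sign $(-1)^i$ that is inadvertently omitted in the paper's displayed definition of $e(\cdot,\cdot)$.)
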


Combining the previous proposition with Lemma \ref{lem::crossings-are-morphisms}, we obtain a link between the dimension of $\Hom(X,Y)$ and the geometric model previously introduced.

\begin{corollary} Let $\Lambda$ be a gentle algebra 
corresponding to a marked surface with neither~$\gpoint$-punctures nor~$\rpoint$-punctures and with $\gpoint$-dissection $(S,M,\Delta^{\gpoint})$.  Let $X$ and $Y$ be two non-isomorphic band bricks. The dimension of $\Hom(X,Y)$ is one half of the sum of
  $\la g(X),g(Y) \ra$ and the number of crossings of the curves corresponding to $X$ and $Y$. \end{corollary}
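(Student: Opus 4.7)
The plan is to combine Proposition~\ref{prop:euler-hom-hom} and Lemma~\ref{lem::crossings-are-morphisms} in a very direct way: both give expressions involving $\dim\Hom(X,Y)$ and $\dim\Hom(Y,X)$, and solving the resulting $2\times 2$ linear system yields the desired formula.

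First, I would apply Proposition~\ref{prop:euler-hom-hom}, valid here because $\Lambda$ has finite global dimension (the surface has no $\rpoint$-punctures, hence no $\gpoint$-punctures are assumed either) and $X$, $Y$ are band modules, to obtain
\[
\la g(X),g(Y)\ra \;=\; \dim \Hom(X,Y)-\dim \Hom(Y,X).
\]
Next, since $X$ and $Y$ are band bricks, write $X\cong B_{z,1,\lambda}$ and $Y\cong B_{z',1,\lambda'}$ with $z$, $z'$ bands and $m=m'=1$, and with $X\not\cong Y$. Then Theorem~\ref{theo::band-morphisms}(1) applies: since $V_{1,1}$ is one-dimensional, a basis of $\Hom(X,Y)$ is in bijection with $K(z,z')$, and similarly a basis of $\Hom(Y,X)$ is in bijection with $K(z',z)$. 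Hence
\[
\dim \Hom(X,Y)+\dim \Hom(Y,X) \;=\; |K(z,z')\sqcup K(z',z)|.
\]

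Then I would invoke Lemma~\ref{lem::crossings-are-morphisms}, after fixing the closed accordions $\gamma$, $\gamma'$ associated with $z$, $z'$ in minimal position, to identify $|K(z,z')\sqcup K(z',z)|$ with the number of crossings between $\gamma$ and $\gamma'$. Adding the two displayed equations and dividing by $2$ gives exactly
\[
\dim \Hom(X,Y)\;=\;\tfrac12\!\left(\la g(X),g(Y)\ra \,+\, \#\{\text{crossings of }\gamma \text{ and } \gamma'\}\right),
\]
as required.

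There is no real obstacle; the only points to double-check are that the hypotheses for each cited result are satisfied (finite global dimension so that Proposition~\ref{prop:euler-hom-hom} applies; $m=m'=1$ and $X\not\cong Y$ so that Theorem~\ref{theo::band-morphisms}(1) contributes no extra $k[t]$-term; and minimal position so that Lemma~\ref{lem::crossings-are-morphisms} counts geometric crossings). Once these are in place the argument is just two lines of arithmetic.
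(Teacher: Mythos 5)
Your proof is correct and is exactly the argument the paper intends: the paper states the corollary as obtained by "combining" Proposition~\ref{prop:euler-hom-hom} with Lemma~\ref{lem::crossings-are-morphisms}, and your write-up simply fills in the two-line arithmetic (difference of Homs from the Euler form, sum of Homs from Krause's theorem via the crossing count, then add and halve). The hypothesis checks you flag (finite global dimension from the absence of punctures, $m=m'=1$ and $X\not\cong Y$ to avoid the extra $k[t]$-term, minimal position) are precisely the right ones.
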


\begin{example} Let us take up again the algebra $\Lambda_n$ from
  the introduction.
  For this algebra, we have 
$$\langle e_i,e_j\rangle = \langle \dim P_i,\dim P_j \rangle = \left \{ \begin{array}{ll} 0 &\textrm{if $i>j$},\\
  1 & \textrm{if $j=i$},\\
  2& \textrm {if $j>i$}\end{array}\right.$$

  Thus, $$\la (a_1,\dots,a_n),(b_1,\dots,b_n)\ra= \sum_{i=1}^n a_ib_i + \sum_{1\leq i<j\leq n} 2a_ib_j.$$
  Note that $\langle \cdot, \cdot \rangle$ is skew symmetric over the hyperplane given by $x_1+ \cdots+x_n = 0$, and any $g$-vector of a band brick is in this hyperplane.  
  \end{example}
\subsection{Band bricks and band semibricks}

A \newword{semibrick module} is a direct sum of finitely many bricks such that
there are no non-zero morphisms between distinct summands.
We will specifically be interested in semibrick modules which are built as a
direct sum of band bricks. We refer to these as \newword{band semibricks}.

Given a surface with
$\gpoint$-dissection, 
$(S,M,\Delta^{\gpoint})$, and dual $\rpoint$-dissection $\Delta^{\rpoint}$, there is a corresponding gentle quiver $(Q(\Delta^{\gpoint}),R(\Delta^{\gpoint}))$.
As we have seen, band brick for $(Q(\Delta^{\gpoint}),R(\Delta^{\gpoint}))$
come in one-parameter families,
and these one-parameter families correspond to simple, primitive, closed curves on $S$. 

We will define a \newword{simple closed multicurve on $S$}
to be a finite collection of primitive closed curves on $S$ which have no self-intersections, and which do not intersect each other. A \newword{simple closed
  multislalom on $S$} is a simple closed multicurve on $S$ each of whose
components is a slalom with respect to $\Delta^{\rpoint}$.
(Equivalently, each of the components is an accordion with respect to
$\Delta^{\gpoint}$, but it is the former perspective which will be more relevant for us.) 

For $\C$ such a simple closed multislalom, 
we say that a module
corresponds to $\C$ if it is a direct sum of one brick for each closed curve,
where the brick is chosen from the one-parameter family corresponding to
the closed curve, and if multiple curves appear corresponding to the same
family of bricks, then the corresponding bricks are chosen so as to be
non-isomorphic.

\begin{proposition}\label{prop::briques-courbes}  Let $(S,M,\Delta^{\gpoint})$ be a surface with a 
  $\gpoint$-dissection. Every band semibrick for $(Q(\Delta^{\gpoint}),R(\Delta^{\gpoint}))$
  corresponds to a simple closed multislalom on $S$. Conversely, any
  module corresponding to such a simple closed multislalom is a band semibrick.
  
  In this correspondence, band bricks correspond to individual slaloms.
  \end{proposition}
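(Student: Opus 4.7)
The plan is to combine the brick criterion of \cref{cor : bands and curves} with Krause's morphism theorem (\cref{theo::band-morphisms}) and its geometric translation (\cref{lem::crossings-are-morphisms}), applied pair by pair to the summands of a semibrick.

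First I would handle the forward direction. Start with a band semibrick $M = \bigoplus_{i=1}^r B_{z_i,m_i,\lambda_i}$. Each summand is a brick, so \cref{cor : bands and curves} forces $m_i=1$ and supplies a primitive simple closed accordion $\gamma_i$ (equivalently, a slalom for $\Delta^{\rpoint}$) associated to $z_i$. It then remains to show that whenever $i\neq j$ one can choose representatives so that $\gamma_i$ and $\gamma_j$ do not intersect. Since the summands of a semibrick are mutually compatible, they are in particular pairwise non-isomorphic, so \cref{theo::band-morphisms}(1) applies and gives
\[
\dim\Hom(B_{z_i,1,\lambda_i},B_{z_j,1,\lambda_j}) = |K(z_i,z_j)|
\]
(since $V_{1,1}$ is one-dimensional), and similarly in the other direction. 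The vanishing of both Hom-spaces forces $|K(z_i,z_j)|+|K(z_j,z_i)|=0$, which by \cref{lem::crossings-are-morphisms} is exactly the number of crossings of $\gamma_i$ and $\gamma_j$ in minimal position. Hence $\{\gamma_1,\ldots,\gamma_r\}$ is a simple closed multislalom.

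For the converse direction, I would start from a simple closed multislalom $\mathcal{C}$ and a module corresponding to it: $M=\bigoplus_{i=1}^r B_{z_i,1,\lambda_i}$, where the $\lambda_i$ are chosen distinct whenever several curves in $\mathcal{C}$ lie in the same homotopy class (i.e.\ share the same band up to conjugacy and inversion). By \cref{cor : bands and curves}, each summand is a brick. For compatibility between summands, I split into two cases for $i\neq j$. If $z_i$ and $z_j$ are not equivalent as bands, the two summands are non-isomorphic and \cref{theo::band-morphisms}(1) combined with \cref{lem::crossings-are-morphisms} identifies $\dim\Hom(B_{z_i,1,\lambda_i},B_{z_j,1,\lambda_j})+\dim\Hom(B_{z_j,1,\lambda_j},B_{z_i,1,\lambda_i})$ with the number of crossings of $\gamma_i$ and $\gamma_j$, which is zero by hypothesis. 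If $z_i$ and $z_j$ are equivalent as bands, the summands are still non-isomorphic because $\lambda_i\neq\lambda_j$, so again \cref{theo::band-morphisms}(1) applies and gives $\dim\Hom=|K(z_i,z_j)|=|K(z_i,z_i)|$; but this last set is empty because $B_{z_i,1,\lambda_i}$ is itself a brick (using \cref{theo::band-morphisms}(2): $\dim\End = |K(z_i,z_i)| + 1$). So all cross Homs vanish and $M$ is a semibrick. The final sentence of the statement is immediate from \cref{cor : bands and curves}.

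The only delicate point is the bookkeeping when several summands come from the same one-parameter family: one must verify that distinct $\lambda$'s really kill the Hom-spaces, and that the corresponding (homotopic but disjoint) representatives of the same closed curve cause no issue in the multicurve. This is precisely what the self-Hom computation $|K(z_i,z_i)|=0$ provides, so no further argument is needed.
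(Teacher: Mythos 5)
Your proposal is correct and follows essentially the same route as the paper's proof: it combines \cref{cor : bands and curves} for the individual summands with \cref{lem::crossings-are-morphisms} (via \cref{theo::band-morphisms}) for pairwise compatibility, and treats the case of non-isomorphic bricks from the same one-parameter family separately, exactly as the paper does. The only difference is that you spell out the Hom-space computations (in particular the justification that $|K(z_i,z_i)|=0$ for a brick) which the paper leaves as a one-line observation.
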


\begin{proof}
  We already know that each band brick defines a primitive closed slalom without
  self-intersections. Given a  band semibrick, the corresponding curves necessarily
  either coincide up to homotopy or can be drawn so as not to intersect,
  by Lemma \ref{lem::crossings-are-morphisms}. This shows that any band semibrick
  corresponds to a simple closed multislalom as stated.

  For the converse, we must also observe that if $B$ and $B'$ are
  two non-isomorphic band bricks corresponding to homotopic curves, then
  $\Hom(B,B')=0$.
  \end{proof}

The previous proposition means that understanding band semibricks for the gentle quiver $(Q(\Delta^{\gpoint}),R(\Delta^{\gpoint}))$ amounts to understanding the simple closed multislaloms on $S$.

\section{Combinatorics of band semibricks of \texorpdfstring{$\Lambda_n$}{Lambda} \texorpdfstring{}{n}}\label{sec:band-semi}
In this section, we specialize the considerations of the previous section
to analyze the band semibricks for $\Lambda_n$. 

\subsection{The surface model for \texorpdfstring{$\Lambda_n$}{Lambda} \texorpdfstring{}{n}}\label{sssection:surfaceModelLambda_n} 

As already mentioned, \cite{PPP} provides an explicit procedure to pass from
a gentle algebra to a surface with dual dissections from which the algebra can be recovered.
We know from \cite[Example 4.16]{PPP} that the surface~$S_n$ for the algebra~$\Lambda_n$ has~$1$ or~$2$ boundary components if~$n$ is odd or even, that is has genus $\lfloor (n-1)/2 \rfloor$, and that is has exactly two~$\gpoint$-marked points and two $\rpoint$-marked points, both on the boundary.  We will give a description of its dissection that is suited to our purpose. 

The dissection~$\Delta^{\rpoint}$ of the surface~$S_n$ consists of two $(n+1)$-gons,
which for convenience we number $\mathsf{P}_1$ and $\mathsf{P}_2$.
We number the edges of $\mathsf{P}_i$ clockwise
as $E_0^{(i)}, E_1^{(i)}, \dots, E_n^{(i)}$ for $i=1,2$.

The two polygons are glued together by identifying $E_j^{(1)}$ with
$E_j^{(2)}$ for $1\leq j\leq n$.  Proceeding clockwise along the edge $E_j^{(1)}$ in $\mathsf{P}_1$ is
identified with proceeding counter-clockwise along the edge
$E_j^{(2)}$ in $\mathsf{P}_2$. Note that the edges $E_0^{(1)}$ and
$E_0^{(2)}$ are not identified. We refer to the identified edges as
$E_1,\dots,E_n$.

For convenience, we draw the polygons  $\mathsf{P}_1$ and $\mathsf{P}_2$
with a long top edge corresponding to the boundary, so that proceeding clockwise along
the edges $E^{(i)}_1,\dots,E_n^{(i)}$ amounts to travelling from right to left as shown in \Cref{fig:polygone_dissection}

\begin{figure}[h!]
    \begin{tikzpicture}
        \begin{scope}[thick, decoration={markings, mark=at position 0.6 with {\arrow{Stealth[length=3mm]}}},mydot/.style={
    circle,
    thick,
    fill=white,
    draw,
    outer sep=0.7pt,
    inner sep=1.5pt
  }, scale = 1]
            \draw (0,0) -- node[above] {$E^{(1)}_0$} (4,0);
            \draw[line width=0.4mm,red,postaction={decorate}] (4,0) -- node[right] {$E^{(1)}_1$} (4,-1.5);
            \draw[line width=0.4mm,red,postaction={decorate}] (4,-1.5) -- node[red,below right] {$E^{(1)}_2$} (3,-2.5);
        \draw[line width=0.4mm,red,postaction={decorate}]  (3,-2.5) -- node[below] {$E^{(1)}_3$} (1,-2.5);
        \draw[line width=0.4mm,red,postaction={decorate}] (1,-2.5) -- node[below left] {$E^{(1)}_4$} (0,-1.5) ;
        \draw[line width=0.4mm,red,postaction={decorate}](0,-1.5) -- node[left] {$E^{(1)}_5$} (0,0) ;
        \draw[dark-green,mydot] (2,0) circle (2.5pt);
        \draw[red,fill] (4,0) circle (3pt);
        \draw[red,fill] (0,0) circle (3pt);
        \draw[red,fill] (0,-1.5) circle (3pt);
        \draw[red,fill] (4,-1.5) circle (3pt);
        \draw[red,fill] (3,-2.5) circle (3pt);
        \draw[red,fill] (1,-2.5) circle (3pt);
        \draw[red] node at (2,-1.25) {$\mathsf{P}_1$};
        \end{scope}
        \begin{scope}[xscale=-1,shift={(-10,0)},thick, decoration={markings, mark=at position 0.6 with {\arrow{Stealth[length=3mm]}}},mydot/.style={
    circle,
    thick,
    fill=white,
    draw,
    outer sep=0.4pt,
    inner sep=1pt
  }]
        \draw (0,0) -- node[above] {$E^{(2)}_0$} (4,0);
        \draw[line width=0.4mm,red,postaction={decorate}] (4,0) -- node[left] {$E^{(2)}_5$} (4,-1.5);
        \draw[line width=0.4mm,red,postaction={decorate}] (4,-1.5) -- node[below left] {$E^{(2)}_4$} (3,-2.5);
        \draw[line width=0.4mm,red,postaction={decorate}]  (3,-2.5) -- node[below] {$E^{(2)}_3$} (1,-2.5);
        \draw[line width=0.4mm,red,postaction={decorate}] (1,-2.5) -- node[below right] {$E^{(2)}_2$} (0,-1.5) ;
        \draw[line width=0.4mm,red,postaction={decorate}](0,-1.5) -- node[right] {$E^{(2)}_1$} (0,0) ;
        \draw[line width=0.4mm,red,dark-green,mydot] (2,0) circle (2.5pt);
        \draw[red,fill] (4,0) circle (3pt);
        \draw[red,fill] (0,0) circle (3pt);
        \draw[red,fill] (0,-1.5) circle (3pt);
        \draw[red,fill] (4,-1.5) circle (3pt);
        \draw[red,fill] (3,-2.5) circle (3pt);
        \draw[red,fill] (1,-2.5) circle (3pt);
        \draw[red] node at (2,-1.25) {$\mathsf{P}_2$};
        \end{scope}
    \end{tikzpicture}
    \caption{The two hexagons of the dissection~$\Delta^{\rpoint}$ of the surface~$S_5$ for the algebra $\Lambda_5$}
    \label{fig:polygone_dissection}
\end{figure}
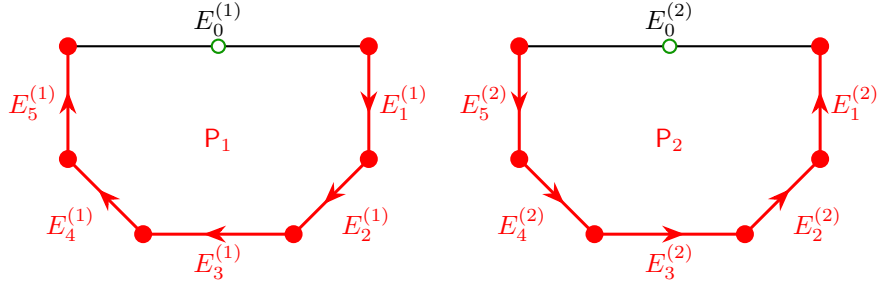

\begin{lemma}\label{slalom} Let $\mathcal{C}$ be a simple closed multislalom on $S_n$.
    Let $x$ be a point where $\mathcal{C}$ crosses $E_i$. On each of
    $\mathsf{P}_1$ and $\mathsf{P}_2$, we know that $\mathcal{C}$ connects $x$ to another crossing point.
  Either both these crossing points are to the left of $x$, or they are both to the right of $x$.
\end{lemma}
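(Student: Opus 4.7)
My plan is to use the slalom condition at $x$ to pin down on which side of $\gamma$ the $\gpoint$-marked point of each of $\mathsf{P}_1$ and $\mathsf{P}_2$ lies, and then translate this into the relative positions of $y_1, y_2$ with respect to $x$. Let $\sigma_k$ denote the segment of (the component of $\mathcal{C}$ through $x$) inside $\mathsf{P}_k$ joining $x$ to $y_k$; it divides $\mathsf{P}_k$ into two regions, one bounded by the clockwise boundary arc of $\partial \mathsf{P}_k$ from one crossing to the other, the other by the counter-clockwise arc.

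First I would establish the key equivalence for $\mathsf{P}_1$: walking along $\sigma_1$ from $y_1$ to $x$, the $\gpoint$-marked point of $\mathsf{P}_1$ (which sits on the top edge $E_0^{(1)}$) lies on the left of $\gamma$ if and only if $y_1$ is to the left of $x$. This combines two ingredients. First, the clockwise labelling $E_0, E_1, \ldots, E_n$ implies, by direct inspection, that $E_0^{(1)}$ belongs to the clockwise arc from $E_{j_1}$ to $E_i$ precisely when $j_1 > i$. Second, in a convex polygon oriented counter-clockwise in the plane, a chord from $a$ to $b$ has its clockwise-arc region on the left of the traversal $a \to b$ (a short direct check, e.g.\ via the signed area of the bounding loop). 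Combining these shows that the $\gpoint$-point of $\mathsf{P}_1$ is on the left side of $\gamma$ exactly when $y_1$ lies on an edge further along the clockwise labelling than $x$, i.e., to the left of $x$.

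An entirely parallel analysis for $\mathsf{P}_2$ along $\sigma_2$ yields a mirrored statement: walking from $x$ to $y_2$, the $\gpoint$-point of $\mathsf{P}_2$ lies on the left of $\gamma$ if and only if $y_2$ is to the right of $x$. The sign flip comes from the reversed direction of travel at $x$: here the walking direction leaves $E_i$, whereas in $\mathsf{P}_1$ it arrived at $E_i$. Since both polygons are positively oriented and the edge identifications $v_{i-1}^{(1)} \sim v_i^{(2)}$ and $v_i^{(1)} \sim v_{i-1}^{(2)}$ make $S_n$ orientable, the notion of ``left of $\gamma$'' is consistent across the crossing at $x$. Invoking the slalom condition---which forces the two $\gpoint$-points to lie on opposite sides of $\gamma$---the two equivalences together rule out one $y_k$ being left and the other right, so either both $y_1, y_2$ lie to the left of $x$, or both lie to the right.

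The main obstacle will be the degenerate situation where $y_1$ or $y_2$ itself lies on $E_i$, since then ``left/right of $x$'' along $E_i$ does not follow from the higher/lower-index dichotomy and must be interpreted via the endpoints of $E_i$. I expect this to be dispatched either by redoing the clockwise/counter-clockwise bookkeeping directly along $E_i$, or, more cleanly, by observing that a same-edge pair $(y_1, x)$ bounds a bigon in $\mathsf{P}_1$ together with a subarc of $E_i$, so such a configuration does not arise once the multislalom is drawn in minimal position with respect to $\Delta^{\rpoint}$.
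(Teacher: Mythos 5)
Your proof is correct and takes essentially the same route as the paper, whose entire argument for this lemma is the single sentence ``This follows from the slalom condition'': you have simply made explicit the left/right bookkeeping in each of $\mathsf{P}_1$ and $\mathsf{P}_2$ and the opposite-sides requirement on the two $\gpoint$-marked points that the paper leaves implicit. The degenerate case where $y_1$ or $y_2$ lies on $E_i$ itself is indeed ruled out by the standing minimal-position assumption (such a segment would cut off a bigon with $E_i$), exactly as you suggest.
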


\begin{proof} This follows from the slalom condition.
  \end{proof}

\begin{lemma}\label{lem::good-signs} Let $\C$ be a simple closed multislalom on $S_n$. 
  Let $E_i$ be an edge which $\C$ crosses.
  Restrict attention to one of the two $(n+1)$-gons of $S_n$. Then
  either $\C$ connects every point of $\C\cap E_i$ to points on 
  edges to the left, or it connects every point of $\C\cap E_i$ to points
  on edges to the right. Which of these holds does not depend on which
  of the $(n+1)$-gons we were looking at.
\end{lemma}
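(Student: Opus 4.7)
The plan is to argue by contradiction. Suppose two crossings $x, y$ of $\C$ on $E_i$ have opposite signs in the sense of \cref{slalom}. By choosing a closest such pair, I may assume $x$ and $y$ are consecutive crossings on $E_i$, with no other crossing of $\C$ with $E_i$ between them. Observe that the ``independence from polygon'' clause of the statement is immediate from \cref{slalom}, so the only content is to show that all crossings on $E_i$ share a common sign within a single polygon.

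Next, I would examine the four arcs of $\C$ emanating from $x$ and $y$: two in $\mathsf{P}_1$ and two in $\mathsf{P}_2$. The opposite-sign hypothesis, combined with \cref{slalom}, means that in each of the two polygons, one arc goes to a crossing on an edge $E_a$ with $a > i$ while the other goes to a crossing on an edge $E_b$ with $b < i$. Since the two arcs in $\mathsf{P}_\ell$ cannot cross each other (the multislalom being simple), they occupy disjoint regions. The key geometric observation is that in each polygon $\mathsf{P}_\ell$, the unique green marked point lies on the boundary edge $E_0^{(\ell)}$, and for every such arc the ``long'' side of the chord, i.e.\ the side containing the entire edge $E_0^{(\ell)}$, is the side containing the green marked point.

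The contradiction will arise from the slalom condition at $x$ and $y$ read together. The slalom condition at a crossing $z$ says that the green marked points of $\mathsf{P}_1$ and $\mathsf{P}_2$ lie on opposite local sides of $\C$ at $z$. Tracing the sides of $\C$ across the segment of $E_i$ joining $x$ to $y$ and through the four arcs into the two polygons, I would show that the opposite-sign assumption pushes both green marked points onto the same side of $\C$ near this segment, violating the slalom condition at $x$ (or $y$). Concretely, I plan to realize this as a bigon-type argument on $S_n$: the segment of $E_i$ between $x$ and $y$, together with appropriate subarcs of the four arcs above, bounds an immersed disk whose combinatorial type is inconsistent with the slalom property and with the simplicity/minimal position of the multislalom.

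The main obstacle I expect is the careful bookkeeping of the local sides of $\C$ near $x$ and $y$. Since $S_n$ is not simply connected, the ``sides'' of $\C$ are only defined locally, so the argument must track how the ``above/below'' labels on $E_i$ at $x$ propagate through the arcs in $\mathsf{P}_1$ and $\mathsf{P}_2$ to the corresponding labels at $y$. Producing the disk bounded by the segment of $E_i$ and the four arcs, and arguing that its topology is forced to contradict either the slalom condition or the non-self-intersection of $\C$, is the heart of the proof.
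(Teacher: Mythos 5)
Your setup matches the paper's: reduce to two crossings $x,y$ on $E_i$ of opposite sign, and observe that the final clause (independence of the polygon) is immediate from \cref{slalom}. But the core of the argument is missing. The decisive geometric fact is that $E_i^{(1)}$ and $E_i^{(2)}$ are glued with \emph{opposite} orientations: travelling clockwise along $E_i^{(1)}$ is identified with travelling counter-clockwise along $E_i^{(2)}$. The paper's proof is three lines once this is used: non-crossing in $\mathsf{P}_1$ forces the left-going point $x$ to lie to the left of the right-going point $y$ on $E_i^{(1)}$; the orientation-reversing gluing then places $x$ to the \emph{right} of $y$ on $E_i^{(2)}$; and \cref{slalom} forces $x$ to still go left and $y$ to still go right in $\mathsf{P}_2$, so the two arcs in $\mathsf{P}_2$ must cross, contradicting simplicity. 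Your proposal never invokes the orientation reversal, and without it no contradiction is available: if the two copies of $E_i$ were glued preserving orientation, two crossings of opposite sign would coexist with no forced intersection. So whatever argument you give must use this reversal somewhere, and yours does not.

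The replacement mechanism you sketch also does not hold up as stated. The slalom condition at a single crossing $z$ is a purely local statement about the green points of the two cells adjacent to $z$ being on opposite sides of $\C$; it is exactly the content of \cref{slalom} and cannot be violated by a hypothesis relating two \emph{different} crossings $x$ and $y$. Likewise, the four arcs emanating from $x$ and $y$ run off to crossings on other edges $E_a$, $E_b$ and do not reconnect, so together with the segment of $E_i$ between $x$ and $y$ they do not bound a disk; the ``bigon-type argument'' is therefore not available in the form you describe, and the phrases ``I would show'' and ``I plan to realize'' mark precisely the step that is never carried out. The contradiction the paper obtains is a transversal crossing of two arcs inside $\mathsf{P}_2$, not a bigon, and it is produced by the order reversal of $x$ and $y$ under the gluing combined with \cref{slalom}.
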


\begin{proof} Suppose the polygon which we are considering is $\mathsf{P}_1$, and
  suppose that we have two points $x,y$ on $E^{(1)}_i$ such that $\C$ connects
  them (on $\mathsf{P}_1$) to, respectively, a point on an edge to the left
  of $E^{(1)}_i$ (i.e., an edge with index greater than $i$), and a point
  on an edge to the right of $E^{(1)}_i$ (i.e., with index less than $i$). 
  In order
  for $\C$ to have no crossings in $\mathsf{P}_1$, $x$ must be to the left of $y$
  on $\mathsf{P}_1$.

  Because of the way $\mathsf{P}_1$ and $\mathsf{P}_2$ are glued together,
  $x$ is to the right of $y$ on $\mathsf{P}_2$. By Lemma \ref{slalom},
  $x$ must be connected by $\C$ on $\mathsf{P}_2$ to an edge to the left of
  $i$, while $y$ must be connected by $\C$ on $\mathsf{P}_2$ to an edge to the right
  of $i$. This contradicts the hypothesis that $\C$ is simple i.e. has no
  self-intersections, as shown in Figure \ref{fig:lem}. 

  \begin{figure}
\begin{tikzpicture}
        \begin{scope}[thick, decoration={markings, mark=at position 0.6 with {\arrow{Stealth[length=3mm]}}},mydot/.style={
    circle,
    thick,
    fill=white,
    draw,
    outer sep=0.4pt,
    inner sep=1pt
  }]
        \draw (0,0) -- node[above] {$E^{(1)}_0$} (4,0);
        \draw[line width=0.4mm,red,postaction={decorate}] (4,0) -- node[right] {$E^{(1)}_1$} (4,-1.5);
        \draw[line width=0.4mm,red,postaction={decorate}] (4,-1.5) -- node[below right] {$E^{(1)}_2$} (3,-2.5);
        \draw[line width=0.4mm,red,postaction={decorate}]  (3,-2.5) -- node[below] {$E^{(1)}_3$} (1,-2.5);
        \draw[line width=0.4mm,red,postaction={decorate}] (1,-2.5) -- node[below left] {$E^{(1)}_4$} (0,-1.5) ;
        \draw[line width=0.4mm,red,postaction={decorate}](0,-1.5) -- node[left] {$E^{(1)}_5$} (0,0) ;
        \draw[dark-green,mydot] (2,0) circle (2.5pt);
        \draw[red,fill] (4,0) circle (3pt);
        \draw[red,fill] (0,0) circle (3pt);
        \draw[red,fill] (0,-1.5) circle (3pt);
        \draw[red,fill] (4,-1.5) circle (3pt);
        \draw[red,fill] (3,-2.5) circle (3pt);
        \draw[red,fill] (1,-2.5) circle (3pt);
        \draw[red] node at (2,-1.25) {$\mathsf{P}_1$};
        \draw node[below] at (1.3,-2.5) {$x$};
        \draw node[below] at (2.7,-2.5) {$y$};
        \draw[line width=0.6mm,blue] (1.3,-2.5) edge[bend right] (0,-1);
        \begin{scope}
            \clip (0,0) -- (4,0) -- (4,-1.5) -- (3,-2.5) -- (1,-2.5) -- (0,-1.5) -- (0,0);
            \draw[line width=0.6mm,blue] (2.7,-2.5) edge[bend left] (4,-1.7);
        \end{scope}
        \end{scope}
        \begin{scope}[xscale=-1,shift={(-10,0)},thick, decoration={markings, mark=at position 0.6 with {\arrow{Stealth[length=3mm]}}},mydot/.style={
    circle,
    thick,
    fill=white,
    draw,
    outer sep=0.5pt,
    inner sep=1pt
  }]
        \draw (0,0) -- node[above] {$E^{(2)}_0$} (4,0);
        \draw[line width=0.4mm,red,postaction={decorate}](4,0) -- node[left] {$E^{(2)}_5$} (4,-1.5);
        \draw[line width=0.4mm,red,postaction={decorate}] (4,-1.5) -- node[below left] {$E^{(2)}_4$} (3,-2.5);
        \draw[line width=0.4mm,red,postaction={decorate}]  (3,-2.5) -- node[below] {$E^{(2)}_3$} (1,-2.5);
        \draw[line width=0.4mm,red,postaction={decorate}] (1,-2.5) -- node[below right] {$E^{(2)}_2$} (0,-1.5) ;
        \draw[line width=0.4mm,red,postaction={decorate}](0,-1.5) -- node[right] {$E^{(2)}_1$} (0,0) ;
        \draw[dark-green,mydot] (2,0) circle (2.5pt);
        \draw[red,fill] (4,0) circle (3pt);
        \draw[red,fill] (0,0) circle (3pt);
        \draw[red,fill] (0,-1.5) circle (3pt);
        \draw[red,fill] (4,-1.5) circle (3pt);
        \draw[red,fill] (3,-2.5) circle (3pt);
        \draw[red,fill] (1,-2.5) circle (3pt);
        \draw[red] node at (2,-1.25) {$\mathsf{P}_2$};
        \draw node[below] at (1.3,-2.5) {$x$};
        \draw node[below] at (2.7,-2.5) {$y$};
        \draw[line width=0.6mm,blue] (1.3,-2.5) edge[bend left] (4,-0.7);
        \begin{scope}
            \clip (0,0) -- (4,0) -- (4,-1.5) -- (3,-2.5) -- (1,-2.5) -- (0,-1.5) -- (0,0);
            \draw[line width=0.6mm,blue] (2.7,-2.5) edge[bend right] (0,-1.7);
        \end{scope}
        \end{scope}
    \end{tikzpicture}
\caption{Example for the proof of \cref{lem::good-signs}}
    \label{fig:lem}
  \end{figure}
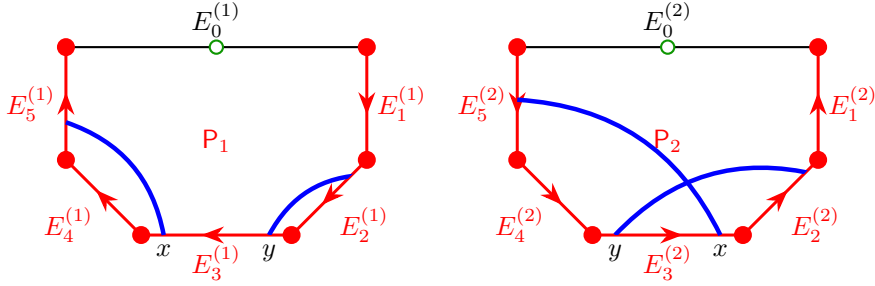

The fact that the result is independent of which $(n+1)$-gon we were looking at, follows from Lemma \ref{slalom}.
\end{proof}

\begin{remark} \label{remark-rep-th} The previous lemma can also be proved in more purely
  representation-theoretic terms. An intersection $x$ of $\C$ with $E_i$
corresponds to an 
appearance of the simple at $i$ in either the top or the socle of
the corresponding band semibrick,
depending on whether $\C$ connects the $x$ to edges to the right or
to the left (respectively), as noted in \cref{rema::closed-slalom}. The lemma now follows from the fact that a band semibrick cannot
have the same simple in its top and its socle.
\end{remark}

We associate to $\C$ an $n$-tuple of integers, $(a_1,\dots,a_n)$, which we
refer to as its \newword{$g$-vector}, adapting a formula of~\cite[Proposition 33]{HPS}. For $1\leq i \leq n$, define $|a_i|$ to be the
number of times that $\C$ crosses $E_i$. Assuming $|a_i|\ne 0$, define the
sign of $a_i$ to be positive if $\C$ connects the points of $\C\cap E_i$ to
edges further to the right, and to be negative if it connects the points of
$\C\cap E_i$ to edges further to the left. The sign is well-defined by
Lemma \ref{lem::good-signs}.

\begin{proposition} Let $\C$ be a simple closed multislalom on $S_n$, and let $X$ be a band semibrick 
  corresponding to $\C$. The $g$-vector of
  $X$ and the $g$-vector of $\C$ coincide. \end{proposition}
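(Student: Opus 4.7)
The plan is to compute both $g$-vectors coordinate by coordinate in terms of peaks and deeps of the band walk, and check that they agree. Both sides are additive under the natural direct sum / disjoint union decomposition: $g(X\oplus X')=g(X)+g(X')$ by definition, and for a multislalom all crossings at each edge $E_i$ share a common sign by \cref{lem::good-signs}, so no cancellation can occur when combining components (this matches the representation-theoretic picture in \cref{remark-rep-th}). It thus suffices to treat a single simple closed slalom $\C$ with its corresponding band brick $X = B_{z,1,\lambda}$, where $z$ is the band associated with $\C$ via \cref{prop::briques-courbes}.

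Next I would invoke the standard minimal projective resolution of a band module over a gentle algebra,
\[
 0 \to \bigoplus_{\rho\in \Sigma_{\bt}^{\bZ}(z)} P_{v(\rho)} \to \bigoplus_{\rho\in \Sigma_{\tp}^{\bZ}(z)} P_{v(\rho)} \to B_{z,1,\lambda} \to 0,
\]
where the direct sums run over the length-zero substrings of $z^{\infty}_{\infty}$ on top or at the bottom (the peaks and deeps of $z$), and $v(\rho)$ denotes the vertex at which $\rho$ sits. This yields $g(X)_i = p_i - d_i$, where $p_i$ and $d_i$ are the numbers of peaks and deeps of $z$ at vertex $i$. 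On the geometric side, \cref{rema::closed-slalom} puts the crossings of $\C$ with $E_i$ in bijection with peaks and deeps of $z$ at $i$, so the total number of crossings is $p_i + d_i$; by \cref{lem::good-signs}, at least one of $p_i$ and $d_i$ must vanish, and hence $|a_i| = p_i + d_i$.

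The remaining and most delicate step is to match the signs: I need to check that crossings at $E_i$ coming from peaks of $z$ carry the positive sign (connect to edges further to the right in each polygon), while those coming from deeps carry the negative sign. This is a local verification near the $\rpoint$-vertex of $\Delta^{\rpoint}$ dual to vertex $i$ of $Q_n$, comparing the clockwise labelling $E^{(1)}_1,\ldots,E^{(1)}_n$ on $\mathsf{P}_1$ (and its reverse on $\mathsf{P}_2$) with the orientation of the parallel arrows $\alpha_i,\beta_i$ incident to~$i$. I expect this sign-matching bookkeeping to be the main obstacle; every other ingredient is already available in the paper. Once it is established, one obtains $a_i = p_i - d_i = g(X)_i$ at every coordinate, completing the proof.
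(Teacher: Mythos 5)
Your proposal is correct and follows essentially the same route as the paper: the paper likewise computes $g(X)$ via the combinatorial formula (number of peaks of $z$ at $i$) minus (number of deeps of $z$ at $i$) and then matches this against the definition of the $g$-vector of $\C$. The sign-matching you flag as the main remaining obstacle is in fact already settled in the paper by \cref{remark-rep-th} together with \cref{rema::closed-slalom} (a crossing of $E_i$ connected to the right corresponds to a peak, hence a positive contribution, and one connected to the left to a deep, hence a negative contribution), so no further bookkeeping is needed; your preliminary reduction to a single slalom is harmless but unnecessary, since the formula is additive over the components of the multislalom in any case.
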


\begin{proof} Let~$z$ be the band walk defining~$X$.  The combinatorics of string and band modules gives us a formula for the~$g$-vector of~$X$ as follows: let~${\textsf {Top}}(z)$ be the multiset of vertices on top of~$z$ and~${\textsf {Bot}}(z)$ be the multiset of vertices at the bottom of~$z$.  Then~$g(X) = (g_1, \ldots, g_n)$, where~$g_i = \left| \left\{ a\in {\textsf {Top}}(z) \ | \ a = i \right\} \right| - \left| \left\{ b\in {\textsf {Bot}}(z) \ | \ b = i \right\} \right|$.  The result then follows directly from the definiton of the $g$-vector of~$\C$.
  \end{proof}

\begin{lemma} \label{reconstruct} Let $\C$ be a simple closed multislalom on $S_n$. Then $\C$ can be reconstructed (up to homotopy) from its $g$-vector.\end{lemma}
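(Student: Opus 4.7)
The plan is to reconstruct $\C$ polygon by polygon. Recall that $S_n$ is built by gluing two $(n+1)$-gons $\mathsf{P}_1$ and $\mathsf{P}_2$ along the shared edges $E_1,\dots,E_n$, with the two copies of $E_0$ lying on the boundary $\partial S_n$. Since $\C$ is disjoint from $\partial S_n$ and from the marked points, its restriction to each polygon $\mathsf{P}_k$ is a finite disjoint union of simple arcs with endpoints in the interiors of $E_1,\dots,E_n$. Because a polygon is a disk, each such arc is determined up to homotopy (inside $\mathsf{P}_k$) by its pair of endpoints, so it suffices to prove that, for each $k\in\{1,2\}$, the non-crossing pair partition of $\C\cap(E_1\cup\dots\cup E_n)$ produced by $\C\cap\mathsf{P}_k$ is determined by the $g$-vector.

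First I would use the $g$-vector to fix the combinatorial data: for each $i$, place $|a_i|$ points on $E_i$ (their exact positions are irrelevant up to homotopy) and then, separately in each polygon $\mathsf{P}_k$, label every such point as \emph{open} or \emph{close} according to whether $\C$ leaves it (inside $\mathsf{P}_k$) toward an edge lying further clockwise or further counter-clockwise along $\partial\mathsf{P}_k$. By \cref{lem::good-signs} all points on any one edge $E_i$ receive the same label, and this label depends only on $\mathrm{sgn}(a_i)$ and on which polygon we are considering; the two polygons receive opposite labels because their clockwise orientations on the shared edges are reversed by the gluing.

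Next I would read the points of $\C\cap(E_1\cup\dots\cup E_n)$ in the linear order induced by the clockwise orientation of $\partial\mathsf{P}_k$, starting just after $E_0^{(k)}$. The induced non-crossing matching inside the disk $\mathsf{P}_k$ must pair every open with a subsequent close, and any such non-crossing matching is unique: it is the standard balanced-parenthesis matching, which one can build by repeatedly pairing any adjacent open-close pair and deleting it. The main obstacle will be to make this combinatorial argument watertight when several crossings lie on the same edge: one has to check that the non-crossing condition inside the disk together with the common label shared by those points force the nesting pattern dictated by the parenthesis matching. This follows from the fact that arcs are embedded and pairwise disjoint in a disk, so equally labeled points on a single edge can be nested in only one way compatible with the other arcs.

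Finally, once each $\mathsf{P}_k$-restriction of $\C$ is reconstructed up to homotopy, gluing back along the identifications $E_i^{(1)}\sim E_i^{(2)}$ recovers $\C$ up to homotopy, completing the reconstruction.
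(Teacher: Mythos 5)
Your proof is correct and takes essentially the same route as the paper's: fix $|a_i|$ points on each $E_i$, observe that the restriction of $\C$ to each polygon is a non-crossing matching of boundary points whose open/close pattern is dictated by the signs of the $a_i$ (via \cref{lem::good-signs}), invoke uniqueness of the balanced-parenthesis matching in a disk, and glue. One minor slip worth correcting: the two polygons do \emph{not} receive opposite labels --- the last assertion of \cref{lem::good-signs} says that whether the points of $\C\cap E_i$ are joined to higher- or lower-indexed edges is the same in both polygons, and since the edges are numbered clockwise in both, the open/close labels agree; what the orientation-reversing gluing flips is the order of the points \emph{within} each shared edge, not their labels. This does not affect your argument, since in either case the labels are determined by the $g$-vector and the uniqueness of the non-crossing matching applies polygon by polygon.
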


\begin{proof} Consider $\mathsf{P}_1$ first. Mark the correct number of crossing
  points on each edge. 
  Starting at the righthand end of
  $E_1^{(1)}$, proceed to the left. Whenever a crossing point on an edge $E_i$
  with $a_i< 0$ is reached, that crossing point must be connected to
  a previously passed crossing point. And, indeed, it must be connected
  to the last one which was seen and which is still available, since any
  other choice would eventually result in a crossing. This reconstructs
  $\C$ on $\mathsf{P}_1$ with no choices (except for the positions of the crossing
  points). Now do the same thing for $\mathsf{P}_2$, except that the positions of
  the crossing points are already fixed. This reconstructs $\C$.
  \end{proof}

\begin{corollary} Let $\C$ be a simple closed multislalom on $S_n$.
    Then the restrictions of $\C$ to $\mathsf{P}_1$ and $\mathsf{P}_2$
  coincide up to homotopy (as families of curves with endpoints on specified boundary segments).
\end{corollary}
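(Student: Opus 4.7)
The plan is to deduce this essentially for free from \cref{reconstruct}, by observing that the reconstruction procedure given in its proof depends only on data that is symmetric in the two polygons.

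First I would unpack what information the reconstruction algorithm actually uses. Going back to the proof of \cref{reconstruct}, the matching of crossing points inside $\mathsf{P}_1$ is produced by a deterministic (stack-like) procedure driven by exactly two pieces of input: the number $|a_i|$ of crossings on each edge $E_i$, and the sign of $a_i$, which dictates on which side of $E_i$ each such crossing must be matched. Both pieces of data are encoded in the $g$-vector of $\C$.

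Next I would invoke \cref{lem::good-signs} to argue that the sign data is the same regardless of which polygon the reconstruction is carried out in: the crossings of $\C$ on $E_i$ connect to edges on the same side (leftward or rightward) when viewed from $\mathsf{P}_1$ as when viewed from $\mathsf{P}_2$. The counts $|a_i|$ trivially agree, since $E_i^{(1)}$ and $E_i^{(2)}$ are the same edge of $S_n$. Therefore running the deterministic reconstruction inside $\mathsf{P}_2$ produces exactly the same combinatorial pairing of crossings on $E_1,\dots,E_n$ as inside $\mathsf{P}_1$.

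Finally, to upgrade this agreement of pairings to an actual homotopy of the arc families, I would use that each polygon $\mathsf{P}_i$ is a topological disk: its only marked point (the unique $\gpoint$-marked point of the cell) lies on the boundary edge $E_0^{(i)}$, not in the interior. Inside such a disk, any two disjoint families of simple arcs with endpoints on prescribed boundary segments are isotopic rel boundary once they realize the same matching. So $\C\cap\mathsf{P}_1$ and $\C\cap\mathsf{P}_2$ coincide up to homotopy as families of curves with endpoints on the identified boundary segments $E_1,\dots,E_n$, which is the statement of the corollary. The main obstacle I anticipate is only a very mild bookkeeping check, namely verifying that the reconstruction in \cref{reconstruct} genuinely ignores which polygon it is applied to, apart from the (free) choice of positions of the crossing points along each $E_i$; once this is observed the corollary is immediate.
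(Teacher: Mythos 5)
Your argument is correct and is essentially the paper's own proof: the paper likewise deduces the corollary directly from the reconstruction procedure of \cref{reconstruct}, noting that the procedure is driven only by the $g$-vector (crossing counts and signs, the latter polygon-independent by \cref{lem::good-signs}) and therefore produces the same family of arcs, up to homotopy with endpoints sliding along the specified segments, in either polygon. One small caution: since the gluing identifies $E_i^{(1)}$ with $E_i^{(2)}$ reversing orientation, the literal pairing of the physical intersection points on the $E_i$ can differ between $\mathsf{P}_1$ and $\mathsf{P}_2$, so your phrase ``exactly the same combinatorial pairing of crossings'' should be read at the level of which segments are joined and how the arcs nest --- which is all the corollary asserts and all that your concluding disk argument needs.
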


\begin{proof} It is immediate from the reconstruction procedure given above that the reconstructed collection of curves are identical up to homotopy on $\mathsf{P}_1$ and $\mathsf{P}_2$.
  It must therefore also hold for $\C$.
\end{proof}

\begin{corollary}\label{bandSemibricksgvectors} If $X$ and $Y$ are band semibricks with
  $g$-vector $(a_1,\dots,a_n)$, then we can write $X = \bigoplus_{i=1}^r B_i$ and $Y = \bigoplus_{i=1}^r B'_i$ with~$B_i$ and~$B'_i$ band bricks that belong to the same one-parameter family.\end{corollary}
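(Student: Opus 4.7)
The plan is to leverage the geometric dictionary established in Proposition~\ref{prop::briques-courbes} together with the reconstruction result of Lemma~\ref{reconstruct}. First I would associate to each of $X$ and $Y$ the corresponding simple closed multislalom on $S_n$, call them $\C_X$ and $\C_Y$; by Proposition~\ref{prop::briques-courbes} such multislaloms exist and determine the band-brick summands up to the choice of parameter~$\lambda$ in each one-parameter family. By the proposition relating the $g$-vector of a band semibrick to the $g$-vector of its multislalom, both $\C_X$ and $\C_Y$ have $g$-vector $(a_1,\dots,a_n)$.

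Next I would invoke Lemma~\ref{reconstruct}, which asserts that a simple closed multislalom on $S_n$ is reconstructed up to homotopy from its $g$-vector: the positions of the crossing points on each edge~$E_i$ are fixed by $|a_i|$, and the matching procedure (proceeding from right to left in $\mathsf{P}_1$, then transferring to $\mathsf{P}_2$ using the preceding corollary) is forced at every step. Consequently $\C_X$ and $\C_Y$ agree as \textit{multisets} of homotopy classes of primitive closed curves: there is a common list $\gamma_1,\ldots,\gamma_r$ of primitive simple closed slaloms (with multiplicities) underlying both.

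From Proposition~\ref{prop::briques-courbes}, each $\gamma_i$ determines a one-parameter family of band bricks; the summands of $X$ (respectively $Y$) attached to $\gamma_i$ are some pairwise non-isomorphic band bricks in that family. Re-indexing so that $B_i$ and $B_i'$ are the summands attached to $\gamma_i$ gives the desired decompositions $X=\bigoplus_{i=1}^r B_i$ and $Y=\bigoplus_{i=1}^r B_i'$ with $B_i$ and $B_i'$ in the same one-parameter family.

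The main obstacle, such as it is, is bookkeeping rather than conceptual: one must make sure that the reconstruction in Lemma~\ref{reconstruct} recovers not only the underlying curves but their multiplicities (so that when $\gamma_i$ appears with multiplicity $m$ both $X$ and $Y$ contribute exactly $m$ pairwise non-isomorphic summands from the corresponding family, which can then be matched up). This is immediate from the counting of crossings in the reconstruction, so no serious difficulty arises.
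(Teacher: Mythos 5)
Your argument is correct and follows essentially the same route as the paper: the paper's proof likewise combines the curve--semibrick correspondence of \cref{prop::briques-courbes} with the uniqueness of the multislalom with a given $g$-vector provided by \cref{reconstruct}. Your version simply spells out the matching of summands component by component, which the paper leaves implicit.
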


\begin{proof} This follows from the fact that there is only one collection
  of curves, up to homotopy, having this $g$-vector, so $X$ and $Y$ must
  both correspond to it.\end{proof}

\begin{remark}
 \cref{bandSemibricksgvectors} is a special case of a more general result for tame algebras: a band semibrick ~$X = \bigoplus_{i=1}^r B_i$ where the~$B_i$ belong to one-parameter families of bricks is determined by its~$g$-vector, up to replacing each~$B_i$ with a brick in the same one-parameter family.  This follows from \cite[Section 3]{GeissLabardiniSchroer}, see also \cite[Theorem 3.8]{PlamondonYurikusa} for a statement in terms of~$g$-vectors.  
\end{remark}

Since the $g$-vector $(a_1,\dots,a_n)$ allows us to
reconstruct $\C$ and therefore, up to choice of band bricks within one-parameter families, the
corresponding band semibrick, two questions naturally arise:
first of all, which $n$-tuples of integers $(a_1,\dots,a_n)$ arise as $g$-vectors of
band semibricks, and secondly, for $(a_1,\dots,a_n)$ which arises as the $g$-vector
of a band semibrick, how can we tell whether it is in fact the $g$-vector of a
single brick, rather than a direct sum of several band bricks? The first of these questions we will resolve in the next subsection. The latter question, we will address, but not completely solve, in Section \ref{sec:orthogonal}. 

\bigskip

\subsection{The Dyck path model}~\label{sec:dyck_path_model}
Given a simple closed multislalom $\C$ on $S_n$, there is a particular way
to redraw $S_n$ which makes the collection $\C$ especially simple to
describe.
Recall that a \newword{Dyck word} is a word $w$ over the binary alphabet $\{u,d\}$ such that $w$ contains as many $u$'s as $d$'s and each prefix of $w$ contains at least as many $u$'s as $d$'s.
An \newword{elementary step} is a segment of the form $[(x,y),(x+1,y+1)]$ or $[(x,y),(x+1,y-1)]$.
A \newword{lattice path} is a sequence of consecutive elementary steps in the plane $\mathbb{Z}\times \mathbb{Z}$, beginning at the origin.
A \newword{Dyck path} is a lattice path representing a Dyck word such that $u$ corresponds to an up-step given by  $[(x,y),(x+1,y+1)]$ and $d$ corresponds to  a down-step given by $[(x,y),(x+1,y-1)]$. For example, the Dyck path representing the word $uuuuddduuddd$ is drawn in Figure~\ref{fig:dyck_path}.
Dyck paths can be characterized as lattice paths which begin and end on the
$x$-axis and which never go below it.

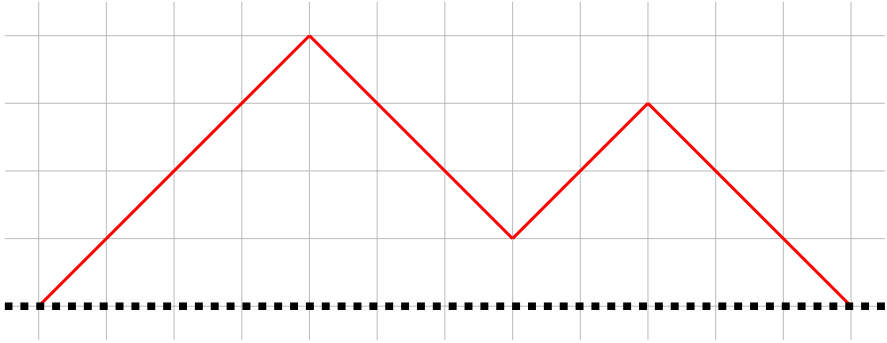
\begin{figure}[h!]
\begin{center}
\begin{tikzpicture}[scale=0.9]
\draw[step=1.0,gray!50,thin] (-0.5,-0.5) grid (12.5,4.5);
\tkzDefPoint(0,0){0}\tkzDefPoint(4,4){1} \tkzDefPoint(7,1){2} \tkzDefPoint(9,3){3} \tkzDefPoint(12,0){4} \tkzDefPoint(12.5,0){5} \tkzDefPoint(-0.5,0){6}
\draw[line width=0.4mm,red](1) edge (2);
\draw[line width=0.4mm,red](2) edge (3);
\draw[line width=0.4mm,red](3) edge (4);
\draw[line width=0.4mm,red](0) edge (1);
\draw[line width=1mm,dashed,black](5) edge (6);

\end{tikzpicture}
\end{center}
    \caption{Dyck path of Dyck word $uuuuddduuddd$.}
    \label{fig:dyck_path}
\end{figure} 

The $g$-vector $(a_1,\dots,a_n)$ of $\C$ can be rewritten as a word $\mathsf{d}(\C)$ on the alphabet $\{u,d\}$, where $a_i$ corresponds to $|a_i|$ copies of $u$ if $a_i<0$, or
of $d$ if $a_i>0$. Note the perhaps unexpected convention that when the entry in
the $g$-vector is \emph{negative}, this corresponds to a sequence of \emph{up}-steps. 

\begin{lemma} For $\C$ a simple closed multislalom on $S_n$, the word
  $\mathsf{d}(\C)$ is a Dyck word. \end{lemma}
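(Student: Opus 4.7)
The plan is to work inside one of the two~$(n+1)$-gons, say~$\mathsf{P}_1$, and to identify~$\mathsf{d}(\C)$ with the standard Dyck-word encoding of a non-crossing perfect matching. The curves of~$\C$ lie in the interior of~$S_n$ and avoid marked points, so no arc of~$\C\cap\mathsf{P}_1$ has an endpoint on the boundary edge~$E_0^{(1)}$; each such arc is a simple arc with both endpoints on~$E_1\cup\cdots\cup E_n$. Since~$\C$ is a simple closed multislalom, these arcs are pairwise non-crossing inside the disk~$\mathsf{P}_1$.

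First I would traverse the boundary of~$\mathsf{P}_1$ from one endpoint of~$E_0^{(1)}$ to the other, meeting the edges in the order~$E_1,E_2,\ldots,E_n$; this totally orders the crossing points so that every point on~$E_i$ precedes every point on~$E_{i+1}$. A collection of pairwise non-crossing arcs in a disk with endpoints on an ordered arc of its boundary is precisely a non-crossing perfect matching on those endpoints, and such a matching is classically encoded as a Dyck word by labeling each endpoint~$u$ if its partner comes later in the traversal and~$d$ if earlier. The non-crossing property is exactly what guarantees that this labeling is balanced, i.e.\ a Dyck word.

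Next I would compare this labeling with~$\mathsf{d}(\C)$. By \cref{lem::good-signs}, all points on a given~$E_i$ are simultaneously matched (inside~$\mathsf{P}_1$) to points on edges of larger index (all~$u$) or all to edges of smaller index (all~$d$). In the former case~$a_i<0$ and~$\mathsf{d}(\C)$ contributes~$|a_i|$ copies of~$u$ at position~$i$; in the latter~$a_i>0$ and it contributes~$|a_i|$ copies of~$d$. Reading these contributions for~$i=1,\ldots,n$ reproduces exactly the Dyck word coming from the non-crossing matching, proving the claim.

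I do not expect any real obstacle beyond the following minor technicality: \cref{lem::good-signs} implicitly assumes that no arc in~$\mathsf{P}_1$ has both endpoints on the same edge~$E_i$, for otherwise the sign of~$a_i$ would be ill-defined. Such an arc, together with the sub-segment of~$E_i$ between its endpoints, would bound a disk in~$\mathsf{P}_1$ containing no marked point; an isotopy across that sub-segment would then remove two crossings of~$\C$ with~$\Delta^{\rpoint}$, contradicting the convention that slaloms are drawn in minimal position. Once this case is ruled out, the argument above is essentially the classical parenthesization bijection between non-crossing matchings and Dyck words.
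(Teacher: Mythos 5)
Your argument is correct and essentially the paper's own: both read the crossing points along the boundary of~$\mathsf{P}_1$ in the order $E_1,\ldots,E_n$, label each point $u$ or $d$ according to the direction of its partner, and deduce the Dyck condition from the fact that the arcs of $\C\cap\mathsf{P}_1$ form a non-crossing matching in a disk (the paper argues the prefix inequality directly --- every $d$ is preceded by its matching $u$ --- rather than citing the classical bijection with Dyck words). Your additional remark ruling out arcs with both endpoints on the same edge $E_i$ via minimal position is a legitimate technical point that the paper leaves implicit.
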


\begin{proof} We begin by giving a different description of $\mathsf{d}(\C)$. Let us
    restrict our attention to $\mathsf{P}_1$. Label each point where $\C$ intersects $E^{(1)}_i$ by $u$ or $d$; we label it $u$ if $\C$ connects the point to an edge further
  to the left, and $d$ if $\C$ connects it to a point further to the right.
  Then $\mathsf{d}(\C)$ is the word we obtain by reading the labels of points where
  $\C$ intersects the boundary of $\mathsf{P}_1$ from right to left.

  Since, any time we read a $d$, we must already have read the $u$ to its right
  that $\C$ connects to it, any prefix of $\mathsf{d}(\C)$ has at least
  as many occurrences of $u$ as of $d$. Further, the total number of each in $\mathsf{d}(\C)$ is the same, because when
  we reach the end, all the crossing points will have been matched up.
  \end{proof}
  
The edge  $E_i^{(1)}$ is represented by a block of elementary steps of
length $a_i$. Hence, we label the corresponding steps of the Dyck path with $i$. 
Add an edge joining the two endpoints of the path, and running below the path, so that we now have a closed curve in the plane.
Identify the region inside this closed curve with $\mathsf{P}_1$. 
Each up-step has a matching down-step, since $\mathsf{d}(\C)$ is a Dyck word. We recover $\C$ by joining by a horizontal line each matching pair of up and down steps. This way of drawing a multislalom on $S_n$ will be referred to as the \newword{Dyck path model}. 

Figure~\ref{fig:dyck_surface} shows a curve on the surface $\mathsf{P}_1$ and its representation using the Dyck path model.
Note in particular that the same region can be identified with $\mathsf{P}_2$, and
$\C$ again consists of horizontal lines.

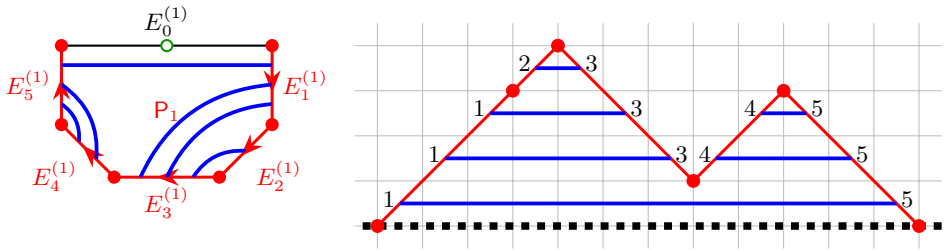
\begin{figure}
        \begin{tikzpicture}
            \begin{scope}[font=\small,scale=0.7,thick, decoration={markings, mark=at position 0.6 with {\arrow{Stealth[length=3mm]}}},mydot/.style={
    circle,
    thick,
    fill=white,
    draw,
    outer sep=0.5pt,
    inner sep=1pt
  }]
                \draw (0,0) -- node[above] {$E^{(1)}_0$} (4,0);
                \draw[line width=0.4mm,red,postaction={decorate}] (4,0) -- node[right] {$E^{(1)}_1$} (4,-1.5);
                \draw[line width=0.4mm,red,postaction={decorate}] (4,-1.5) -- node[below right] {$E^{(1)}_2$} (3,-2.5);
                \draw[line width=0.4mm,red,postaction={decorate}]  (3,-2.5) -- node[below] {$E^{(1)}_3$} (1,-2.5);
                \draw[line width=0.4mm,red,postaction={decorate}] (1,-2.5) -- node[below left] {$E^{(1)}_4$} (0,-1.5) ;
                \draw[line width=0.4mm,red,postaction={decorate}] (0,-1.5) -- node[left] {$E^{(1)}_5$} (0,0) ;
                \draw[dark-green,mydot] (2,0) circle (3pt);
                \draw[red,fill] (4,0) circle (3pt);
        \draw[red,fill] (0,0) circle (3pt);
        \draw[red,fill] (0,-1.5) circle (3pt);
        \draw[red,fill] (4,-1.5) circle (3pt);
        \draw[red,fill] (3,-2.5) circle (3pt);
        \draw[red,fill] (1,-2.5) circle (3pt);
                \draw[red] node at (2,-1.25) {$\mathsf{P}_1$};
                \begin{scope}[draw=blue]
                    \clip (0,0) -- (4,0) -- (4,-1.5) -- (3,-2.5) -- (1,-2.5) -- (0,-1.5) -- (0,0);
                    \draw[line width=0.5mm,blue] (0,-0.37) edge (5,-0.37);
                    \draw[line width=0.5mm,blue] (0,-0.74) edge[bend left] (0.66,-2.16);
                    \draw[line width=0.5mm,blue] (0,-1.11) edge[bend left] (0.33,-1.83);
                    \draw[line width=0.5mm,blue] (1.5,-2.5) edge[bend left] (4,-0.74);
                    \draw[line width=0.5mm,blue] (2,-2.5) edge[bend left] (4,-1.11);
                    \draw[line width=0.5mm,blue] (2.5,-2.5) edge[bend left] (3.5,-2);
                \end{scope}
            \end{scope}
            \begin{scope}[font=\small,scale=0.6,shift={(7,-4)}]
            \draw[step=1.0,gray!50,thin] (-0.5,-0.5) grid (12.5,4.5);
            \tkzDefPoint(0,0){0}\tkzDefPoint(4,4){1} \tkzDefPoint(7,1){2} \tkzDefPoint(9,3){3} \tkzDefPoint(12,0){4} \tkzDefPoint(12.5,0){5} \tkzDefPoint(-0.5,0){6}
            \draw[line width=0.4mm,red](1) edge (2);
            \draw[line width=0.4mm,red](2) edge (3);
            \draw[line width=0.4mm,red](3) edge (4);
            \draw[line width=0.4mm,red](0) edge (1);
            \draw[line width=1mm,dashed,black](5) edge (6);

            \draw[line width=0.5mm,blue] (0.5,0.5) -- (11.5,0.5);
            \draw[line width=0.5mm,blue] (1.5,1.5) -- (6.5,1.5);
            \draw[line width=0.5mm,blue] (2.5,2.5) -- (5.5,2.5);
            \draw[line width=0.5mm,blue] (3.5,3.5) -- (4.5,3.5);
            \draw[line width=0.5mm,blue] (7.5,1.5) -- (10.5,1.5);
            \draw[line width=0.5mm,blue] (8.5,2.5) -- (9.5,2.5);

            \draw node[left] at (0.6,0.6) {$1$};
            \draw node[left] at (1.6,1.6) {$1$};
            \draw node[left] at (2.6,2.6) {$1$};

            \draw node[left] at (3.6,3.6) {$2$};
            
            \draw node[right] at (4.4,3.6) {$3$};
	        \draw node[right] at (5.4,2.6) {$3$};
            \draw node[right] at (6.4,1.6) {$3$};

            \draw node[left] at (7.6,1.6) {$4$};
            \draw node[left] at (8.6,2.6) {$4$};
            \draw node[right] at (9.4,2.6) {$5$};
            \draw node[right] at (10.4,1.6) {$5$};
            \draw node[right] at (11.4,0.6) {$5$};
            
            \draw[red,fill] (0,0) circle (4pt);
            \draw[red,fill] (3,3) circle (4pt);
            \draw[red,fill] (4,4) circle (4pt);
            \draw[red,fill] (7,1) circle (4pt);
            \draw[red,fill] (9,3) circle (4pt);
            \draw[red,fill] (12,0) circle (4pt);
        \end{scope}
        \end{tikzpicture} 
    \caption{Left: the multislalom $\C$ given by the $g$-vector $(-3,-1,3,-2,3)$ drawn on either polygon of $S_5$. Right: the Dyck path model used to represent the same multislalom in a simpler way.}
    \label{fig:dyck_surface}
\end{figure}

\begin{proposition}\label{prop:conditionGvector} An $n$-tuple of integers $(a_1,\dots,a_n)$
  is the $g$-vector of some simple closed multislalom $\C$ on $S_n$ if and only if the partial sums $\sum_{i=1}^k a_i$ are non-positive for $1\leq k\leq n-1$, and $\sum_{i=1}^n a_i=0$. \end{proposition}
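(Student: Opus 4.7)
The plan is to prove both directions by direct use of the Dyck path model developed in this subsection. The forward direction is essentially an unpacking of the preceding lemma, so most of the work goes into constructing the multislalom in the converse direction.

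For the forward direction, if $\C$ is a simple closed multislalom with $g$-vector $(a_1,\dots,a_n)$, the preceding lemma says that $\mathsf{d}(\C)$ is a Dyck word. Reading off the contributions of $a_1,\dots,a_k$ to $\mathsf{d}(\C)$, the number of $u$'s minus the number of $d$'s equals $-\sum_{i=1}^k a_i$. The Dyck condition that every prefix has at least as many $u$'s as $d$'s therefore says $\sum_{i=1}^k a_i \leq 0$, and the equality of totals gives $\sum_{i=1}^n a_i = 0$.

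For the converse, given $(a_1,\dots,a_n)$ satisfying the partial sum conditions, the word obtained by concatenating $|a_i|$ copies of $u$ (when $a_i<0$) or $d$ (when $a_i>0$) in order of $i$ is a Dyck word by the same translation. My construction proceeds by the Dyck path model: draw the Dyck path in the plane, label the $|a_i|$ consecutive steps coming from $a_i$ with the edge index $i$, and, for each matched pair consisting of an up-step and its associated down-step, draw a horizontal chord connecting the two. Identify the region below the Dyck path with $\mathsf{P}_1$, and mirror the same construction on a copy of $\mathsf{P}_2$. The endpoints of each chord lie on the edges $E_i$ that are glued between $\mathsf{P}_1$ and $\mathsf{P}_2$, and on each such edge each crossing point belongs to precisely one chord in $\mathsf{P}_1$ and one in $\mathsf{P}_2$, so gluing produces a collection $\C$ of closed curves on $S_n$.

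The verification that $\C$ is a simple closed multislalom with $g$-vector $(a_1,\dots,a_n)$ has three parts. First, horizontal chords matching parentheses in a Dyck word are either nested or disjoint, so no chord intersects itself and no two chords cross; hence $\C$ is a simple closed multicurve. Second, $\C$ is a slalom for $\Delta^{\rpoint}$: each chord in $\mathsf{P}_1$ separates the top boundary segment $E_0^{(1)}$, which carries the unique $\gpoint$-marked point of $\mathsf{P}_1$, from the interior bounded by the chord, and the mirrored construction on $\mathsf{P}_2$ does the same with opposite orientation, so the $\gpoint$-marked points of consecutive cells of $\Delta^{\rpoint}$ lie on opposite sides of $\C$. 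Third, by construction $|a_i|$ is precisely the number of crossings of $\C$ with $E_i$, and the sign convention from the Dyck path matches that used in the definition of the $g$-vector of $\C$.

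The main obstacle is ensuring that each connected component of $\C$ is primitive, as this is implicit in the term \emph{simple closed multislalom}. This can be handled either by direct inspection of the Dyck diagram — a nontrivial power of a shorter closed curve would force a corresponding repetition of the chord pattern — or by invoking the uniqueness of reconstruction from the $g$-vector established in \cref{reconstruct}, which forces $\C$ to be drawn exactly as above and thus realizes a prescribed $g$-vector only through a configuration of primitive components.
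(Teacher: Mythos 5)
Your proof is correct and follows essentially the same route as the paper's: both directions reduce to the statement that the partial-sum conditions are equivalent to $\mathsf{d}(\C)$ being a Dyck word, with necessity coming from the preceding lemma and sufficiency from the explicit chord construction in the Dyck path model. You simply spell out in more detail (simplicity, the slalom condition, primitivity) what the paper's two-sentence proof leaves implicit.
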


\begin{proof} These conditions on $(a_1,\dots,a_n)$
  amount to the requirement that
  $d(\C)$ is a Dyck word, which we have already shown is necessary.
  Conversely, if $(a_1,\dots,a_n)$ satisfies the conditions of the proposition,
  then it defines a Dyck word, and the Dyck path model tells us that
  it is possible to draw a simple closed multislalom on $S_n$ with this
  $g$-vector. \end{proof}

\begin{corollary}\label{corr:semibricks-gvec} An $n$-tuple of integers $\mathbf{g} = (a_1,\dots,a_n)$ is the
  $g$-vector of a semibrick band module for $\Lambda_n$ if and only if the partial sums
  $\sum_{i=1}^k a_i$ are non-positive for $1\leq k\leq n-1$, and $\sum_{i=1}^n a_i=0$. In this case, there is a collection of one-parameter families of band bricks $(\mathscr{F}_i)_{1\leqslant i \leqslant r}$, unique up to permutation, such that if
  $$X = \bigoplus_{i=1}^r B_i$$ where $B_i$ is chosen generically in $\mathscr{F}_i$, then $X$ is a band semibrick and satisfies $g(X) = \mathbf{g}$.
\end{corollary}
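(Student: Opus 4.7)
The plan is to deduce this corollary by directly assembling the geometric results established earlier in this section. The key observation is that Proposition~\ref{prop::briques-courbes} puts band semibricks for $\Lambda_n$ in correspondence with simple closed multislaloms on $S_n$, and that the earlier proposition matching $g$-vectors shows that this correspondence preserves the $g$-vector. Together with Proposition~\ref{prop:conditionGvector}, this will immediately give the characterization; the uniqueness statement will follow from the reconstruction procedure in Lemma~\ref{reconstruct}.

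First I would prove the ``if and only if'' statement. Given a band semibrick $X$ for $\Lambda_n$, Proposition~\ref{prop::briques-courbes} produces a simple closed multislalom $\C$ on $S_n$ to which $X$ corresponds, and the proposition identifying the $g$-vectors gives $g(X) = g(\C)$. Conversely, starting from a simple closed multislalom, that same result returns a band semibrick with the same $g$-vector. Thus the set of $g$-vectors of band semibricks for $\Lambda_n$ coincides with the set of $g$-vectors of simple closed multislaloms on $S_n$, and Proposition~\ref{prop:conditionGvector} characterizes the latter as precisely the integer tuples $(a_1,\ldots,a_n)$ with $\sum_{i=1}^n a_i = 0$ and $\sum_{i=1}^k a_i \leq 0$ for $1 \leq k \leq n-1$.

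For the uniqueness statement, I would start from such a $\mathbf{g}$. By Lemma~\ref{reconstruct}, the multislalom $\C$ realizing this $g$-vector is unique up to homotopy, so its decomposition $\C = \gamma_1 \sqcup \cdots \sqcup \gamma_r$ into primitive simple closed components is unique up to homotopy and permutation. Each $\gamma_i$ corresponds to a one-parameter family $\mathscr{F}_i$ of band bricks, and this list $(\mathscr{F}_i)_{1 \leq i \leq r}$ is then determined up to permutation. Choosing $B_i \in \mathscr{F}_i$ generically, one gets a module $X = \bigoplus_i B_i$ with $g(X) = \sum_i g(B_i) = \mathbf{g}$ by additivity of the $g$-vector.

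The remaining step is to verify that this generic $X$ is actually a semibrick, which is where I expect the main (modest) obstacle. For two summands $B_i, B_j$ whose associated curves $\gamma_i, \gamma_j$ are non-homotopic, the curves can be drawn disjoint in $\C$, so Lemma~\ref{lem::crossings-are-morphisms} gives $\Hom(B_i, B_j) = 0 = \Hom(B_j, B_i)$. For two summands whose curves are homotopic, a generic choice of the parameters $\lambda$ ensures that the bricks are pairwise non-isomorphic, and then the argument recorded at the end of the proof of Proposition~\ref{prop::briques-courbes} (non-isomorphic bricks in the same one-parameter family have no nonzero morphisms between them) concludes that $X$ is a semibrick. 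This completes the proof.
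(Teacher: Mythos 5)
Your proposal is correct and follows exactly the route the paper intends: the corollary is deduced by combining Proposition~\ref{prop::briques-courbes}, the proposition identifying the $g$-vectors of a band semibrick and of its multislalom, Proposition~\ref{prop:conditionGvector}, and the reconstruction Lemma~\ref{reconstruct} (equivalently Corollary~\ref{bandSemibricksgvectors}) for the uniqueness. Your added verification that the generic direct sum is indeed a semibrick is precisely the converse direction already recorded in the proof of Proposition~\ref{prop::briques-courbes}, so nothing is missing.
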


\begin{example}\label{ex::gvectorandband}
Let $n=3$ and $g = (-1,-1,2)$. Then we get the band module given in \cref{fig:bandandaccordion}.
\end{example} 

\section{Perfectly clustering words}
\label{sec:PCWs}

\subsection{Burrows--Wheeler transformation and perfectly clustering words}
\label{ssec:BWandPCW}

Consider an  alphabet $A$. Two words in the free monoid $A^*$ freely generated by $A$ are called \newword{conjugate} if for some words $x,y$ in $A^*$, they may be written $xy$ and $yx$. A \newword{conjugation class} is the set of conjugates of a word. A word is called \newword{primitive} if it is not the power of another word. A conjugation class is called \newword{primitive} if one of its elements is (equivalently, all of them are) primitive.

We assume now that the alphabet $A$ is totally ordered, and use the lexicographic order on $A^*$.

The \newword{Burrows--Wheeler transformation}, introduced in \cite{BW}, is the mapping from $A^*$ into itself, denoted $BW$, defined as follows. Call \newword{Burrows--Wheeler tableau} of a word $v$ the rectangular tableau whose rows from top to bottom are the conjugates of $v$, ordered lexicographically, from the smallest to the largest. See Figure \ref{BWtableau}. By definition, $BW(v)$ is the word obtained by reading the last column of the tableau, from top to bottom. 

\begin{definition} \label{def:MPA}
A word $v \in A^*$ is called \newword{perfectly clustering} if $v$ is primitive and $BW(v)$ is a weakly decreasing word.
\end{definition}

\begin{example}\label{ex:MPA} Let $v = acacacbbbc$. We can check easily that $v$ is primitive. Following Figure \ref{BWtableau}, we get $BW(v) = ccccbbbaaa$. Thus $v$ is a perfectly clustering word.

\begin{figure}[h!] 
$$
\begin{array}{ccccccccc c}
a&c&a&c&a&c&b&b&b& \pmb{c}\\
a&c&a&c&b&b&b&c&a&\pmb{c}\\
a&c&b&b&b&c&a&c&a&\pmb{c}\\
b&b&b&c&a&c&a&c&a&\pmb{c}\\
b&b&c&a&c&a&c&a&c&\pmb{b}\\
b&c&a&c&a&c&a&c&b&\pmb{b}\\
c&a&c&a&c&a&c&b&b&\pmb{b}\\
c&a&c&a&c&b&b&b&c&\pmb{a}\\
c&a&c&b&b&b&c&a&c&\pmb{a}\\
c&b&b&b&c&a&c&a&c&\pmb{a} 
\end{array}
$$
\caption{Burrows--Wheeler tableau of the perfectly clustering word $acacacbbbc$}\label{BWtableau}
\end{figure}
\end{example}

Clearly, two conjugate words have the same image under $BW$. Thus the mapping $BW$ is really a mapping from the set of conjugation classes of $A^*$ into $A^*$. If we restrict it to the set of primitive conjugation classes, it becomes injective. Indeed, suppose that $v$ is lexicographically minimum in its conjugation class (so that $v$ is a \newword{Lyndon word}). Let $BW(v)=w$. One recovers $v$ from $w$ in the following way. 
Consider the \newword{standard permutation} of $w$, denoted $st(w)$, obtained by numbering the letters of $w$, starting with the smallest one in $A$ and numbering its occurrences in $w$ from left to right, then the second smallest, and so on. For example $st(baaacaba)=61238475$. Let $\tau$ be the inverse permutation of $st(w)$; write $\tau$ as a product of disjoint cycles in the symmetric group; it turns out that there is a unique cycle; then replace the $i$ in the cycle by the $i$-th letter of $w$. Then one obtains one of the rows of the Burrows--Wheeler tableau of $v$. If the cycle ends with 1, it will be the first row, namely $v$ (see~\cite[Section 15.2]{R} for more details). 

\begin{example} \label{ex:inverseBW} Let $v$ as in \cref{ex:MPA}. The standard permutation of $BW(v) = c^4b^3a^3$ is $789X456123$ (we write $X$ for $10$), its inverse is $\tau=89X5671234$, in 
cycle form $\tau=(8293X45671)$, and replacing the $i$ in the cycle by the $i$-th letter of $w$, 
we obtain $acacacbbbc$, and so get back $v$.
\end{example}

One has a simple characterization of perfectly clustering words. In order to state it, call \newword{circular factor} of a word $w$ a factor of $ww$ 
whose length does not exceed the length of $w$. 

\begin{proposition} A word $v\in A^*$ is perfectly clustering if and only if for any circular factor of $v$ of the
form $aub$, with $a,b\in A$ and $u\in A^*$, there is no circular factor of $v$ 
of the form $a'ub'$ with $a',b'\in A$, $a<a'$ and $b<b'$. 
\end{proposition}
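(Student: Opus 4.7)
The plan is to set up a single correspondence between circular factors of the form $aub$ (with $a,b\in A$ and $u\in A^{\ast}$) and cyclic conjugates of $v$, and then read off both directions of the iff from it. To each occurrence of a circular factor $aub$ in $v$, I would associate the cyclic conjugate $w$ obtained by starting to read $v$ immediately after the distinguished occurrence of $a$; this $w$ begins with $ub$ and ends with the letter $a$, since the last letter of the rotation starting at position $i+1$ is the letter at position $i$.

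For the direction $(\Rightarrow)$, I assume $v$ is perfectly clustering and suppose, for contradiction, that $aub$ and $a'ub'$ are circular factors with $a<a'$ and $b<b'$. Under the correspondence, these produce two cyclic conjugates $w$ (starting with $ub$ and ending in $a$) and $w'$ (starting with $ub'$ and ending in $a'$). They share the prefix $u$ and differ at the next letter with $b<b'$, so $w<w'$ lexicographically, and $w$ therefore appears strictly above $w'$ in the Burrows--Wheeler tableau of $v$. Consequently the letter $a$ appears before the letter $a'$ in $BW(v)$. Since $BW(v)$ is weakly decreasing, $a\ge a'$, contradicting $a<a'$.

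For the direction $(\Leftarrow)$, I assume the stated condition (together with primitivity of $v$, which is part of the definition). Take two consecutive rows $w$ and $w'$ of the Burrows--Wheeler tableau; because $v$ is primitive they are distinct, so $w<w'$ strictly. Write $u$ for their longest common prefix, so $w=ub\cdots$ and $w'=ub'\cdots$ with $b<b'$, and let $a,a'$ be their respective last letters. The correspondence shows that $aub$ and $a'ub'$ are circular factors of $v$. The hypothesis then forbids $a<a'$, so $a\ge a'$; since this holds for every consecutive pair of rows, $BW(v)$ is weakly decreasing, which, together with primitivity, is exactly the definition of perfectly clustering.

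The main obstacle is only the careful bookkeeping identifying which occurrence of $u$ in the cyclic word gives rise to which conjugate; once that correspondence is in place, both implications collapse to the single observation that lexicographic comparison between two distinct conjugates of $v$ is settled by their first differing letter, and the Burrows--Wheeler transform reads off their last letters in that same order. Primitivity enters only to guarantee that consecutive rows of the tableau are distinct and therefore produce an honest pair of circular factors; it plays no further role in the combinatorial equivalence.
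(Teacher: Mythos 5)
Your argument follows the same route as the paper's: translate circular factors $aub$ into conjugates of the form $ub\cdots a$, and observe that the lexicographic order of two conjugates is decided by the first letter after their common prefix while $BW$ reads off their last letters. The forward direction is fine and matches the paper's almost verbatim.

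There is, however, one gap in your converse. When the longest common prefix $u$ of two consecutive rows $w<w'$ has length $|v|-1$, you have $w=ub$ and $w'=ub'$ with nothing after $b$ and $b'$; then the last letters are $a=b$ and $a'=b'$, and the words $aub$ and $a'ub'$ have length $|u|+2=|v|+1$. By the paper's definition a circular factor must have length at most $|v|$, so these are \emph{not} circular factors and the hypothesis cannot be invoked --- yet in this very case $a=b<b'=a'$, which is exactly the failure of weak decrease you need to exclude. The paper closes this case separately: $ub$ and $ub'$ are conjugate, hence have the same multiset of letters, which forces $b=b'$ and contradicts $b<b'$; so the case simply cannot occur. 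Your proof needs this one additional observation (or some equivalent check that $|u|\le |v|-2$) before the sentence ``the correspondence shows that $aub$ and $a'ub'$ are circular factors of $v$'' is justified. With that line added, your proof coincides with the paper's.
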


\begin{proof} Suppose that $v$ has the circular factors $aub$ and  $a'ub'$ with $u\in A^*$, $a,b,a',b'\in A$, $a<a'$ and $b<b'$. Then 
$v$ has two conjugates $v_1=ub\cdots a$ and $v_2=ub'\cdots a'$. Then $v_1<v_2$, so that $v_1$ is above $v_2$ in the 
Burrows--Wheeler tableau, and their last letters are respectively $a,a'$; since $a<a'$, $v$ is not perfectly clustering.

Conversely, suppose that $v$ is not perfectly clustering; then $v$ has two conjugates $v_1,v_2$, with $v_1<v_2$ and respective last letters $a,a'$ with $a<a'$. Since $v_1<v_2$ and since they have the same length, there exist factorizations 
$v_1=ub\cdots,v_2=ub'\cdots$, with $u\in A^*,b,b'\in A$ and $b<b'$. It follows that either $v_1=ub\cdots a,v_2=ub'\cdots a'$, and $w$ has the circular factors $aub$ and $a'ub'$; or $v_1=ub,v_2=ub'$, and denoting by $M(v)$ the multiset of letters appearing in the word $v$, we see that $M(ub)=M(ub')$ since these two words are conjugate, and therefore $b=b'$, a contradiction: this case cannot occur.
\end{proof}

The bijection $BW$ is a particular case of a bijection due to Gessel and one of the authors \cite{GR}. We present the slight variant given in \cite{GRR}. Each conjugation class of words is also called a \newword{necklace}.

Let $w=a_1\cdots a_n$ be a word and $\tau$ the inverse of its standard permutation. With each cycle $(j_1,\ldots,j_i)$ of $\tau$, associate 
the necklace $(a_{j_1},\ldots, a_{j_i})$. It turns out that this necklace is always primitive.

\begin{definition}\label{def:GRtransf}
 Let $\Phi$ be the map sending each word $w \in A^*$ to the multiset of primitive necklaces obtained by taking all cycles of the inverse of its standard permutation $\tau = st(w)^{-1}$, and replacing $i$ by the $i$-th letter of $w$.
\end{definition} 

\begin{example} \label{ex:GRex} Let $w=baacbcab$. We have $st(b a a c b c a b) = 41275836$, the inverse of the latter permutation is $4 1 2 7 5 8 3 6^{-1}
=23715846$. The cycles of $\tau$ are $(1,2,3,7,4)$, $(5)$, and $(6,8)$. The corresponding multiset of necklaces is therefore $\Phi(w)=
\{(baaac),(b),(cb)\}$.
\end{example}

\begin{thm}[\cite{GR}] \label{thm:GRbij}
The map $\Phi$ is bijective.
\end{thm}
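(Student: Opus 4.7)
The plan is to construct an explicit inverse $\Psi$ and verify $\Phi\circ\Psi=\mathrm{id}$ and $\Psi\circ\Phi=\mathrm{id}$. Given a multiset $M$ of primitive necklaces of total length $n$, consider the set of \emph{pointed necklaces} $(N,p)$ where $N\in M$ (with multiplicity) and $p$ is a position in $N$. Associate to each such pointed necklace its infinite periodic expansion $u(N,p)$, the sequence obtained by reading $N$ cyclically starting at $p$. Sort the pointed necklaces lexicographically into a list $s_1,\ldots,s_n$; primitivity of each $N$ forces distinct positions within the same $N$ to yield distinct $u$'s, so the only lex-ties occur across identical copies of the same necklace in $M$, and those ties are letter-preserving so any consistent tie-breaking is harmless. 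Define $\tau\colon\{1,\ldots,n\}\to\{1,\ldots,n\}$ by sending $i$ to the sort-index of the cyclic successor of $s_i$ in its necklace, let $\sigma=\tau^{-1}$, and finally set $\Psi(M):=a_1\cdots a_n$ where $a_i$ is the letter carried by the slot $s_{\sigma(i)}$.

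The crux is the following lemma: for any word $w=a_1\cdots a_n$ with $\sigma=st(w)$, $\tau=\sigma^{-1}$, and $u_i:=a_i\,a_{\tau(i)}\,a_{\tau^2(i)}\cdots$, the implication $\sigma(i)<\sigma(i')\Longrightarrow u_i\le u_{i'}$ holds, with equality $u_i=u_{i'}$ exactly when the $\tau$-cycles through $i$ and $i'$ produce identical decorated necklaces. I would prove it by contradiction: assume $\sigma(i)<\sigma(i')$ and let $k$ be the first index at which the two sequences differ, supposing toward a contradiction that $a_{\tau^k(i)}>a_{\tau^k(i')}$. If $k=0$ the letter inequality immediately gives $\sigma(i)>\sigma(i')$. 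For $k\ge 1$, the definition of the standard permutation yields $\sigma(\tau^k(i))>\sigma(\tau^k(i'))$, which via $\sigma\circ\tau=\mathrm{id}$ rewrites as $\tau^{k-1}(i)>\tau^{k-1}(i')$. The equality $a_{\tau^{k-1}(i)}=a_{\tau^{k-1}(i')}$ coming from minimality of $k$ then promotes this to $\sigma(\tau^{k-1}(i))>\sigma(\tau^{k-1}(i'))$, i.e.\ $\tau^{k-2}(i)>\tau^{k-2}(i')$, and iterating downward step by step one arrives at $i>i'$ with $a_i=a_{i'}$, forcing $\sigma(i)>\sigma(i')$, a contradiction.

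With the lemma in hand, both compositions follow. For $\Psi\circ\Phi=\mathrm{id}$: given $w$, the slots of $\Phi(w)$ are in bijection with $w$-positions via the cycles of $\tau=st(w)^{-1}$, the $u$-sequence of a slot agrees with $a_i\,a_{\tau(i)}\,a_{\tau^2(i)}\cdots$ for the corresponding position $i$, and the lemma identifies the lex-sort of these sequences with the standard permutation ranks, so $\Psi$ reassembles $w$. For $\Phi\circ\Psi=\mathrm{id}$: given $M$, let $w=\Psi(M)$ with its built-in $\sigma$; by construction the sort-rank of the slot placed at position $i$ is $\sigma(i)$, and applying the lemma to $w$ itself forces $\sigma=st(w)$ (any tie-breaking during the sort only permutes indistinguishable slots and leaves the multiset of cycle-decorated necklaces unchanged), whence the cycles of $\tau=\sigma^{-1}=st(w)^{-1}$ decorated by the letters of $w$ recover $M$. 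The principal obstacle is the key lemma, whose self-referential flavour---relating a lex order on forward $\tau$-orbits to the ranks $\sigma=\tau^{-1}$ that define $\tau$ itself---captures the combinatorial essence of the Gessel--Reutenauer correspondence, and the iterative walk-back-through-$\tau$ argument is where the real work is concentrated.
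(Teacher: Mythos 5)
Your argument is correct; note, though, that the paper itself gives no proof of this statement --- it cites \cite{GR} and only records the inverse map (following \cite{GRR}) together with an example, so there is no proof in the paper to compare against. Your $\Psi$ is that same inverse in a different guise: sorting pointed necklaces by the lexicographic order of their infinite expansions is exactly the paper's ordering $u<v$ iff $uuu\cdots<vvv\cdots$ on rotations, and the letter carried by the cyclic predecessor $s_{\sigma(i)}$ is precisely the last letter of the $i$-th row of the right-justified tableau. What you supply beyond the paper is the verification itself, and your key lemma --- $\sigma(i)<\sigma(i')\Rightarrow u_i\le u_{i'}$, proved by locating the first discrepancy and walking it back through $\tau$ via $\sigma\circ\tau=\mathrm{id}$ and the tie-breaking rule of the standard permutation --- is exactly the engine of the Gessel--Reutenauer proof, and your induction is sound. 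Two points would need tightening in a full write-up: (i) primitivity of the necklaces produced by $\Phi$ is asserted in the paper but proved nowhere, and the equality clause of your lemma tacitly uses it; ruling out $u_i=u_{\tau^d(i)}$ for $0<d<\ell$ requires a short separate argument (the shift by $d$ induces an order-preserving bijection of a finite set of equal-letter positions, hence is the identity); (ii) in the direction $\Phi\circ\Psi=\mathrm{id}$, an arbitrary tie-breaking can yield a $\sigma$ differing from $st(w)$ by a relabelling of identical copies of a necklace, so one should either fix a copy-consistent tie-breaking (which forces $\sigma=st(w)$ on the nose) or spell out your parenthetical remark that the multiset of decorated cycles is unaffected by such a relabelling. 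Neither point is a genuine gap.
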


In order to describe the inverse bijection, we follow \cite{GRR}. Given a multiset of primitive necklaces, consider the multiset of words obtained by taking the union with multiplicities of all corresponding conjugation classes (there are $i$ words for each necklace of length $i$); note that these words are all primitive. Put them in order, from top to bottom, in a right justified tableau, where the order is as follows: $u<v$ if and only if $uuuu....<vvvv...$ (lexicographic order for infinite words). If there are nontrivial multiplicities in the multiset, then there are repeated rows. Then the inverse image of the multiset is the last column, read from top to bottom.

\begin{example} \label{ex:GRex2} Take the previous multiset $\{(baaac),(b),(cb)\}$. The multiset of words is $\{baaac,aaacb,aacba,acbaa,cbaaa,b,cb,bc\}$. The order on the corresponding infinite words is $aaacb...<aacba...<acbaa...<baaac...<bb...<bc...<cbaaa...<cbcb...$. Thus the tableau is

$$
\begin{array}{rrrrrr}
a&a&a&c&\pmb{b} \\
a&a&c&b&\pmb{a} \\
a&c&b&a&\pmb{a} \\
b&a&a&a&\pmb{c} \\
&&&&\pmb{b} \\
&&&b&\pmb{c} \\
c&b&a&a&\pmb{a}\\
&&&c&\pmb{b}\\
\end{array}
$$
Its last column is $baacbcab$ as desired.
\end{example}

Therefore the mapping $BW$, viewed as a mapping on primitive necklaces, is the restriction of $\Phi^{-1}$ to the set of primitive necklaces.  See also \cite{PR} for a study of the similarity of the two constructions.

\subsection{Perfectly clustering words and band bricks}

The first step to show the link between perfectly clustering words and band bricks is to define a multiset of circular words from a simple closed multislalom $\mathcal{C}$ drawn on $S_n$, or equivalently, from its $g$-vector $(a_1,\dots, a_n)$. To obtain the multiset of circular words associated with $\mathcal{C}$: follow $\mathcal{C}$ and write down the label of the edges of the surface $S_n$ crossed by $\mathcal{C}$.

More precisely, from the $g$-vector $(a_1,\dots, a_n)$ draw two copies of the Dyck path ($\mathsf P_1$ and $\mathsf P_2$) and the simple closed multislalom $\mathcal{C}$ with labels as in Section~\ref{sec:dyck_path_model}.
We denote by $M_{(a_1,\dots,a_n)}$ the multiset of circular words defined from the simple closed multislalom $\mathcal{C}$ with $g$-vector $(a_1,\dots,a_n)$.
We construct all the circular words in $M_{(a_1,\dots,a_n)}$ with the following algorithm. We choose arbitrarily an unvisited step on a copy of the Dyck path. From there, we follow $\mathcal{C}$ (rightwards on the first copy, and leftwards on the second copy) until we reach another step $x$ on the Dyck path, we write down the label of $x$ and go to the step identified with $x$ on the other copy of the Dyck path. We continue to follow $\mathcal{C}$ until we reach the step where we started. At this point, we add the circular word to $M_{(a_1,\dots, a_n)}$. If we have visited all the steps on both copies of the Dyck path, the algorithm stops. Otherwise, we start again from an unvisited step.
For example, the multiset of circular words of the $g$-vector $(-3,-1,3,-2,3)$ shown in \Cref{fig:dyck_surface} is $M_{(-3,-1,3,-2,3)} = \{(54545131),(3231)\}$. 

\begin{remark}
    The number of circular words in $M_{(a_1,\dots, a_n)}$ corresponds to the number of curves in the simple closed multislalom $\mathcal{C}$.
\end{remark}

If all entries of a $g$-vector except the first one are non-negative, then $M_{(a_1,\dots, a_n)}$ can be obtained from $\Phi(n^{a_n}\dots 2^{a_2})$ by inserting 1's so that every second letter in each of the cycles is a 1.
For example, $M_{(-8,2,2,4)} = \{(41314121)(41314121)\}$ is the multiset of circular words of the simple closed multislalom with $g$-vector  $(-8,2,2,4)$, while $\Phi(44443322) = \{(4342)(4342)\}$. Before proving this key result linking perfectly cluster words and band bricks, 
we need to highlight a property of circular words of a weakly decreasing word under the bijection $\Phi$. 

\medskip

\begin{lemma}~\label{l:multisetClustering}
    Let $w$ be a weakly decreasing word. Then, each circular word in $\Phi(w)$ is perfectly clustering.
\end{lemma}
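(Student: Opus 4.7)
The plan is to exploit the explicit inverse of $\Phi$ described in \cref{ssec:BWandPCW}, together with the fact that, restricted to singleton multisets of primitive necklaces, $\Phi^{-1}$ coincides with $BW$.

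First, I write $\Phi(w) = \{v_1, \ldots, v_k\}$ as a multiset. By \cref{thm:GRbij}, each necklace $v_i$ is automatically primitive, so the goal reduces to showing that $BW(v_i)$ is weakly decreasing for every $i$.

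Next, I use the tableau construction of $\Phi^{-1}$: all conjugates of every $v_j$, taken with multiplicities, are sorted by the total order $u \preceq u'$ iff $u^{\infty} \leq (u')^{\infty}$, placed right-justified in a tableau, and read down the last column to recover $w$. The key observation driving the proof is that this sort criterion depends only on individual rows, so isolating the rows that arise from a fixed $v_i$ preserves their relative order in the full sort. Consequently, the subword of $w$ formed by the last letters at those positions equals $BW(v_i)$ with each letter repeated $m_i$ times, where $m_i$ is the multiplicity of $v_i$ in $\Phi(w)$; indeed, rows sharing the same infinite periodic extension are identical and appear as a consecutive block of size $m_i$, and distinct primitive conjugates yield distinct infinite extensions.

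Finally, since $w$ is weakly decreasing, every subsequence of $w$ is weakly decreasing. In particular, the subword produced above is weakly decreasing, which is equivalent to $BW(v_i)$ itself being weakly decreasing. Combined with the primitivity of $v_i$, this yields that $v_i$ is perfectly clustering. The only mildly technical point is the multiplicity bookkeeping, but it causes no genuine obstacle since within each block of $m_i$ identical rows the last letter is constant, so $BW(v_i)$ is recovered unambiguously (and a constant-by-blocks word is weakly decreasing iff its underlying word is).
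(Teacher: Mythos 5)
Your proof is correct and follows essentially the same route as the paper: both arguments rest on the observation that for equal-length words the order on infinite periodic extensions agrees with the lexicographic order, so the conjugates of a fixed necklace occupy the $\Phi^{-1}$ tableau in the same relative order as in that necklace's own Burrows--Wheeler tableau, making $BW(v_i)$ a subword of the weakly decreasing word $w$. Your explicit bookkeeping of multiplicities is a slightly more careful treatment of a point the paper leaves implicit, but it does not change the argument.
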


\begin{proof}
    Let $u$ be a necklace of $\Phi(w)$. 
    Take $u_1$ and $u_2$ two conjugates of $u$ such that $u_1 \neq u_2$. We write $u_1^\omega$ for the infinite word $u_1u_1\dots$.  We have that $u_1^{\omega} < u_2^{\omega}$ if and only if $u_1 < u_2$, since $u_1$ and $u_2$ have the same length. This implies that the conjugates of $u$ appear in the same order in the tableau of $u$ and the tableau of $\Phi(w)$. Therefore, the last column of the tableau of $u$ is a weakly decreasing word since the last column of the tableau of $\Phi(w)$ i.e $w$ is a weakly decreasing. Thus, any circular word in $\Phi(w)$ is perfectly clustering.
\end{proof}

\medskip

\begin{thm}~\label{prop:curves2words}
  Let $(a_1,\dots,a_n)$ be the $g$-vector of a simple closed multislalom with $a_1$ a negative integer and
  $a_i$ a non-negative integer for $2\leq i \leq n$.
  Let $M_{(a_1,\dots,a_n)}$ be the multiset of circular words defined by $(a_1,\dots, a_n)$. Then, \begin{align}~\label{eq:multi_set}f(M_{(a_1,\dots,a_n)}) = \Phi(n^{a_n}\dots 2^{a_2}),\end{align}
where $f$ is the erasing morphism $f(1) = \varepsilon$ and $f(i) = i$ for $i \in \{2,\dots,n\}$.
\end{thm}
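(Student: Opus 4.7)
The plan is to identify the permutation on $E_1$-crossings induced by the algorithm with the standard permutation $st(w)$ of $w = n^{a_n} \cdots 2^{a_2}$, and then to use a geometric symmetry of $S_n$ to match the resulting multisets of necklaces.

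First, I would unpack the Dyck-path structure. Since $a_1 < 0$ is the unique negative entry, the Dyck word associated to $(a_1,\dots,a_n)$ is a pyramid $u^{|a_1|} d^{a_2} \cdots d^{a_n}$: the $|a_1|$ up-steps are all labeled $1$, and the down-steps are labeled $2, 3, \dots, n$ with multiplicities $a_2, \dots, a_n$. Writing $C_1 := 0$ and $C_i := a_2 + \cdots + a_i$ for $i \geq 2$, the pyramid matching pairs the $k$-th up-step with the $(|a_1|-k+1)$-th down-step, so every horizontal segment in $\mathsf{P}_1$ connects an $E_1$-crossing (labeled $1$) to some $E_i$-crossing with $i \geq 2$. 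Consequently any curve traced by the algorithm alternates between crossings of $E_1$ and crossings of some $E_i$ with $i \geq 2$, and after applying $f$ we keep exactly the sequence of non-$1$ labels, one per $E_1$-crossing visited.

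Next, I would compute the permutation $\sigma$ on $\{1,\dots,|a_1|\}$ that sends the $m$-th $E_1$-crossing (in the $\mathsf{P}_1$-Dyck order) to the next $E_1$-crossing visited by the algorithm. Combining the pyramid matching (identical in both polygons) with the orientation reversal between $\mathsf{P}_1$ and $\mathsf{P}_2$---namely, the $\ell$-th physical crossing of $E_i$ becomes the $(|a_i|-\ell+1)$-th crossing in the $\mathsf{P}_2$-Dyck order---a direct calculation yields the closed formula
$$\sigma(m) = m + C_{i-1} + C_i - |a_1|, \qquad m \in \bigl[|a_1| - C_i + 1,\; |a_1| - C_{i-1}\bigr],$$
where $i$ is the label of the down-step paired with $E_1$-$m$ in $\mathsf{P}_1$. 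Moreover, this $i$ is exactly the label recorded between $E_1$-$m$ and $E_1$-$\sigma(m)$ by the algorithm.

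Third, I would identify $\sigma$ with $st(w)$. A position $p \in [|a_1|-C_i+1,\; |a_1|-C_{i-1}]$ of $w$ carries the letter $i$, and there are $C_{i-1}$ letters of $w$ strictly less than $i$ together with $p - (|a_1|-C_i)$ occurrences of $i$ at positions up to $p$, so its rank is $p + C_{i-1} + C_i - |a_1|$; this matches the formula for $\sigma$. Hence $\sigma = st(w)$ and $w[m]$ equals the label recorded at position $m$. Each cycle $(m_0, \dots, m_{r-1})$ of $\sigma$ therefore corresponds to one curve of the multislalom and contributes to $f(M_{(a_1,\dots,a_n)})$ the necklace $(w[m_0], w[m_1], \dots, w[m_{r-1}])$. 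On the other hand, $\Phi(w)$ is defined via cycles of $\tau = \sigma^{-1}$, whose underlying sets coincide with those of $\sigma$ but are read in the opposite cyclic direction, producing the reversed necklace $(w[m_0], w[m_{r-1}], \dots, w[m_1])$.

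The main obstacle is to reconcile these reversed necklaces. For this I would exploit the orientation-reversing involution $\iota$ of $S_n$ that exchanges the two polygons $\mathsf{P}_1$ and $\mathsf{P}_2$ while fixing each edge $E_i$ setwise; this involution exists because the dissection $\Delta^{\rpoint}$ is manifestly symmetric under the polygon swap. By the corollary stating that the restrictions of any simple closed multislalom to $\mathsf{P}_1$ and $\mathsf{P}_2$ coincide up to homotopy, the multislalom $\C$ is $\iota$-invariant, so $\iota$ permutes the component curves of $\C$ while reversing their orientation. Each curve is therefore either mapped to itself---in which case its associated necklace equals its own reverse---or paired with a distinct curve whose associated necklace is the reverse of the original; in either case the multiset $f(M_{(a_1,\dots,a_n)})$ is closed under necklace reversal. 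Combined with the preceding step, this yields the desired equality $f(M_{(a_1,\dots,a_n)}) = \Phi(n^{a_n} \cdots 2^{a_2})$.
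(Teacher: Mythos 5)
Your proof is correct, and the bulk of it --- tracing the algorithm through the pyramid-shaped Dyck path, computing the induced permutation on $E_1$-crossings, and identifying it with $st(w)$ for $w=n^{a_n}\cdots 2^{a_2}$ so that $f(M_{(a_1,\dots,a_n)})$ and $\Phi(w)$ are read off from mutually inverse permutations --- is exactly the paper's argument. Where you genuinely diverge is the final reconciliation of the reversed necklaces. The paper does this combinatorially: by \cref{l:multisetClustering} each necklace of $\Phi(w)$ is perfectly clustering, hence conjugate to its own reversal by \cite[Theorem 4.3]{PS}, and the two multisets coincide. You instead use the polygon-swap involution $\iota$ of $S_n$. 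That argument is sound: the swap commutes with the gluing (parameter $t$ on $E_j^{(1)}$ is identified with parameter $1-t$ on $E_j^{(2)}$, and the swap intertwines these), so $\iota$ is a well-defined homeomorphism preserving $\Delta^{\rpoint}$; it sends $\C$ to a simple closed multislalom with the same $g$-vector, hence homotopic to $\C$ by \cref{reconstruct} (a cleaner justification than the corollary you invoke, though equivalent); and because the canonical traversal runs rightwards on $\mathsf{P}_1$ and leftwards on $\mathsf{P}_2$ while $\iota$ exchanges the two polygons preserving each $E_j$, it reverses the reading direction of every component, so $f(M_{(a_1,\dots,a_n)})$ is closed under necklace reversal. (The operative point is this reversal of the traversal convention, not the orientation-reversal of $\iota$ as a surface homeomorphism, which by itself says nothing about how a curve is read.) Your route is self-contained within the surface model and yields, as a byproduct, an independent proof that the necklaces in $\Phi(w)$ are conjugate to their reversals; the paper's route is shorter but imports the Simpson--Puglisi result. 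If you write this up, the well-definedness of $\iota$ on the glued surface and the traversal-reversal computation are the two places where your sketch currently waves its hands and should be made explicit.
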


\begin{proof} 
  To prove \Cref{eq:multi_set}, we view $f(M_{(a_1,\dots, a_n)})$ and $\Phi(n^{a_n}\dots 2^{a_2})$ as defining permutations of the letters of $n^{a_n}\dots 2^{a_2}$  and show they are inverses of each other, from which we show the desired result follows. 

    The Dyck path given by the $g$-vector $(a_1,\dots,a_n)$ with $a_1$ negative and $a_i$ non-negative is given by $|a_1|$ up-steps labelled by $1$ followed by $|a_1|$ down-steps labelled by $2^{a_2}3^{a_3} \dots n^{a_n}$. The simple closed multislalom $\mathcal{C}$ is obtain by the construction explained in \Cref{sec:dyck_path_model}.
    
    The shape of the Dyck path means that the factors of length $3$ in a circular word in $M_{(a_1,\dots,a_n)}$ are either $x1y$ or $1x1$ with $x,y \in \{2,3,\dots, n\}$. We will study the factors of the form $x1y$.
    A factor $x1y$ in $M_{(a_1,\dots,a_n)}$ starts at an up-step on $\mathsf{P}_1$.  Let $s'$ be its matching down-step. The step $s'$ is labelled by a letter $x$, so we write down $x$. 
    Suppose that $x$ is the $q$-th occurence of the letter $x$ in $n^{a_n} \dots 2^{a_2}$. Therefore, it is at the height $(a_n + a_{n-1} + \dots + a_{x+1} + q)$ in the Dyck path of $\mathsf{P}_1$.
    The step $s'$ in $\mathsf{P}_1$ is identified with the down-step at position $(a_n + a_{n-1} + \dots + a_{x+1} + a_x + 1 - q)$ in $\mathsf{P}_2$ since the gluing between $\mathsf{P}_1$ and $\mathsf{P_2}$ is along edges which are oriented in opposite directions. Walking to the next matching up-step we write a $1$ and are at the same height on the Dyck path, i.e. the $(a_n + a_{n-1} + \dots + a_{x+1}  + a_x + 1 - q)$-th $1$ counting from the bottom.
    The up-step labelled by the $(a_n + a_{n-1} + \dots + a_{x+1} + a_x + 1 -  q)$-th $1$ in $\mathsf{P}_2$ is identified with the $|a_1| + 1 - (a_n + a_{n-1} + \dots + a_{x+1} + a_x + 1 -  q) = a_2 + \dots + a_{x-1} +  q$-th up-step in $\mathsf{P}_1$. We go to the matching down-step to write the letter at position $a_2 + \dots + a_{x-1}  + q$ in $n^{a_n} \dots 2^{a_2}$.
    Hence, our factor of length $3$ is the letter at position $(a_n +a_{n-1} + \dots + a_{x+1} + q)$ in $ n^{a_n} \dots 2^{a_2}$ followed by a $1$ followed by the letter at position $a_2 + \dots + a_{x-1} +  q$ in $n^{a_n} \dots 2^{a_2} $. 
   
Let us now look at $f(M_{(a_1,\dots,a_n)})$. 
The position of the $q$-th occurence of $x$ in $w$ is $(a_n + a_{n+1} + \dots + a_{x-1} + q)$ and its standardization is $a_2 + \dots + a_{x-1} + q$. Recall that $\Phi(w)$ is defined to be the inverse of the permutation given by the standardization. Thus, we have shown that $\Phi(w)$ and $f(M_{(a_1,\dots,a_n)})$ define inverse permutations of the letters of $ n^{a_n} \dots 2^{a_2}$.

Now let $v$ be a circular word in $\Phi(w)$.
By Proposition~\ref{l:multisetClustering}, $v$ is perfectly clustering. Hence, $v$ is conjugate to $\widetilde{v}$ (the reversal of $v$) 
by~\cite[Theorem 4.3]{PS}. This means that each circular word in $\Phi(w)$ is conjugate to a different circular word in $f(M_{(a_1,\dots,a_n)})$ i.e. both multisets are equal since conjugates of circular words are equal. Thus, the Equation~(\ref{eq:multi_set}) holds. 
\end{proof}

From a primitive word $w$, one can construct a band walk on the quiver for $\Lambda_n$. For each integer $i \in \{2,\dots,n\}$, we associate the cycle $z_i=\alpha_1\alpha_2\cdots\alpha_{i-1}\beta_{i-1}^{-1}\cdots\beta_2^{-1}\beta_1^{-1}$ on the quiver $Q_n$. Let $w = w_1 w_2 \dots w_r$ be a primitive word over the alphabet $\{2,\dots,n\}$, we defined $\psi(w) = z_{w_1} z_{w_2}\dots z_{w_r}$. For example,  if we take $Q_3$ as in Example~\ref{ex::bandrep} and $w = 23223$, then $$\psi(23223) = \alpha_1\beta_1^{-1}\alpha_1\alpha_2\beta_2^{-1}\beta_1^{-1}\alpha_1\beta_1^{-1}\alpha_1\beta_1^{-1}\alpha_1\alpha_2\beta_2^{-1}\beta_1^{-1}.$$
The band $\Lambda_n$-module $B_{\psi(w),1,\lambda}$ is a band module for any $\lambda \in k^\times$ since $\psi(w)$ is a band walk. 
\begin{thm}
\label{thm: brick bands and pcw}

A primitive word $w$ on the alphabet $\{2,\dots, n\}$ is perfectly clustering if and only if the band $\Lambda_n$-module $B_{\psi(w),1,\lambda}$ %with $g$-vector $(a_1,\dots a_n)$
is a band brick for some (equivalently any) $\lambda \in k^\times$. 
\end{thm}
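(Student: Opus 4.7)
The plan is to transport the brick condition for $B_{\psi(w),1,\lambda}$ through the surface model of Section~\ref{sec:gentle-curves} and the Dyck-path combinatorics of Section~\ref{sec:band-semi}, then connect it to the Burrows--Wheeler combinatorics via \cref{prop:curves2words} and the fact that $BW$ is the restriction of $\Phi^{-1}$ to primitive necklaces. By \cref{cor : bands and curves}, $B_{\psi(w),1,\lambda}$ is a brick (for some, equivalently every, $\lambda\in k^\times$) if and only if the closed accordion $\gamma_w$ associated with $\psi(w)$ is a simple closed curve on $S_n$, equivalently a simple closed multislalom with a single component.

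The first step is to read off from $\psi(w)=z_{w_1}\cdots z_{w_r}$ its $g$-vector and its cyclic crossing word. Each factor $z_{w_k}$ produces exactly one peak at vertex $w_k$ (subword $\alpha_{w_k-1}\beta_{w_k-1}^{-1}$), and each concatenation $z_{w_k}z_{w_{k+1}}$ produces exactly one deep at vertex~$1$ (subword $\beta_1^{-1}\alpha_1$). With $r=|w|$ and $\alpha_i=|\{k:w_k=i\}|$, I therefore obtain $g(B_{\psi(w),1,\lambda})=(-r,\alpha_2,\ldots,\alpha_n)$, and by \cref{rema::closed-slalom} the circular word of $\gamma_w$ with respect to $\Delta^{\rpoint}$ is $(w_1\,1\,w_2\,1\,\cdots\,w_r\,1)$. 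Note that this $g$-vector satisfies the hypothesis of \cref{prop:curves2words}: $a_1=-r<0$ and $a_i=\alpha_i\geq 0$ for $i\geq 2$.

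Let $\mathcal{C}$ denote the unique simple closed multislalom with $g$-vector $(-r,\alpha_2,\ldots,\alpha_n)$, whose existence and uniqueness are guaranteed by \cref{prop:conditionGvector} and \cref{reconstruct}. For the forward direction, if $B_{\psi(w),1,\lambda}$ is a brick, then $\gamma_w$ is itself a simple closed multislalom of $g$-vector $(-r,\alpha_2,\ldots,\alpha_n)$; by \cref{reconstruct} it coincides with $\mathcal{C}$, which therefore has a single component and $M_{\mathcal{C}}=\{[(w_1\,1\,w_2\,1\,\cdots\,w_r\,1)]\}$. Applying \cref{prop:curves2words} and then the erasing morphism $f$ yields $\Phi(n^{\alpha_n}\cdots 2^{\alpha_2})=f(M_\mathcal{C})=\{[w]\}$, which means $BW(w)=n^{\alpha_n}\cdots 2^{\alpha_2}$ is weakly decreasing, so $w$ is perfectly clustering. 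For the converse, if $w$ is perfectly clustering, then $\Phi(n^{\alpha_n}\cdots 2^{\alpha_2})=\{[w]\}$. In the Dyck path model of $\mathcal{C}$ all up-steps are labelled $1$ while all down-steps are labelled in $\{2,\ldots,n\}$, so every necklace in $M_\mathcal{C}$ alternates strictly between $1$'s and letters $\ge 2$; consequently $f$ is injective on $M_\mathcal{C}$, forcing $|M_\mathcal{C}|=|\Phi(n^{\alpha_n}\cdots 2^{\alpha_2})|=1$ and $M_\mathcal{C}=\{[(w_1\,1\,\cdots\,w_r\,1)]\}$. Thus $\mathcal{C}$ is a single simple closed curve, and its peak/deep sequence coincides with that of $\gamma_w$; by the band-to-accordion bijection this simple curve must be homotopic to $\gamma_w$, so $\gamma_w$ is simple and $B_{\psi(w),1,\lambda}$ is a brick.

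The main technical hurdle will be the very last step of the converse: that a simple closed curve with peak/deep sequence $(w_1\,1\,\cdots\,w_r\,1)$ actually arises from $\psi(w)$ rather than some other band walk in $\Lambda_n$ with the same sequence. I expect this to follow from a direct analysis of the relations $R_n=\{\beta_i\alpha_{i+1},\,\alpha_i\beta_{i+1}\}$: at the deep at $1$ the outgoing letter is forced to be $\alpha_1$, and inductively, at each intermediate vertex $j$ of an ascending segment from $1$ to some $w_k$, having just used $\alpha_{j-1}$ the composition $\alpha_{j-1}\beta_j$ is a relation, forcing the next edge to be $\alpha_j$; the symmetric argument with $\beta_j\alpha_{j+1}\in R_n$ rigidifies the descent from the peak back to $1$ and produces exactly $z_{w_k}$ on each arch. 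Concatenating these rigid segments identifies the band walk with $\psi(w)$ up to cyclic rotation, which is all that is needed for the isomorphism class of the corresponding band brick.
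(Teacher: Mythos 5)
Your proof is correct and follows essentially the same route as the paper's: both pass through the surface/Dyck-path model, identify $f(M_{(-r,\alpha_2,\ldots,\alpha_n)})$ with $\Phi(n^{\alpha_n}\cdots 2^{\alpha_2})$ via \cref{prop:curves2words}, and translate the brick condition into simplicity of the corresponding closed curve via \cref{prop::briques-courbes} and \cref{cor : bands and curves}. You are in fact more explicit than the paper on the point you flag as a hurdle (that the unique simple multislalom with crossing sequence $(w_1\,1\,w_2\,1\cdots w_r\,1)$ really is the accordion of $\psi(w)$, forced by the relations); just note that $z_i$ ascends along the $\beta_j^{-1}$ and descends along the $\alpha_j$, so your rigidity sketch as written produces $\psi(w)$ only up to inversion and rotation, which is harmless since a band walk and its inverse yield isomorphic band modules.
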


\begin{proof}
    Let $w$ be a perfectly clustering word of the alphabet $\{2,\dots,n\}$.  The $n$-tuple of integers $(-|w|,|w|_2,|w|_3,\dots, |w|_n)$ is the $g$-vector of some simple closed multislalom $\mathcal{C}$ by Proposition~\ref{prop:conditionGvector}. Moreover, we have that $\Phi(n^{|w|_n} \dots 2^{|w|_2}) = \{(w)\} = f(M_{(-|w|,|w|_2,|w|_3,\dots, |w|_n)})$ since $w$ is perfectly clustering and by Theorem~\ref{prop:curves2words}. Hence, the simple closed multislalom $\mathcal{C}$ is a simple closed curve. By \cref{prop::briques-courbes}, the band $\Lambda_n$-module $B_{\psi(w),1,\lambda}$ is brick. 

    Let $B_{\psi(w),1,\lambda}$ be a band brick with $g$-vector $(a_1,\dots a_n)$.
    Using the Dyck path representation, one can obtain some simple closed multislalom $\mathcal{C}$ associated to $(a_1,\dots, a_n)$. In fact, $\mathcal{C}$ is a simple closed slalom, since $B_{\psi(w),1,\lambda}$ is a brick by \cref{prop::briques-courbes}. Hence, $$|M_{(a_1,\dots, a_n)}| = |\Phi(n^{a_n}\dots 2^{a_2})| = 1$$ by Proposition~\ref{prop:curves2words} and $\Phi(n^{a_n}\dots 2^{a_2}) = \{(w)\}$ meaning that $w$ is a perfectly clustering word.
\end{proof}

\section{Band semibricks}
\label{sec:orthogonal}

\subsection{The band brick compatibility fan} 
We have already established in Corollary \ref{corr:semibricks-gvec} that there is a bijection between band semibricks of $\Lambda_n$ and $g$-vectors $(a_1,\dots,a_n)$ satisfying that the initial partial sums are non-positive and the total sum is zero.

The question arises: which $g$-vectors  correspond to a single one-parameter
family of bricks, rather than a direct sum of multiple band bricks? We refer to
such $g$-vectors as band brick $g$-vectors.

Further, given a $g$-vector corresponding to a band semibrick, how can we decompose it into the band brick $g$-vectors of the band bricks that make it up? We refer to this as the \newword{band semibrick decomposition problem}. 

It turns out to be natural to address the two questions together.

A \newword{fan} is a collection~$\mathcal{F}$ of cones such that any face of a cone in~$\mathcal{F}$ is a cone in~$\mathcal{F}$ and such that the intersection of any two cones in~$\mathcal{F}$ is a face of each.

We say that two one-parameter families of band bricks
(or two band brick $g$-vectors) are compatible if they can
appear in a band semibrick together, i.e., if the two one-parameter 
families of bricks do not have any non-zero morphisms between them.

\begin{definition}
 The \newword{band brick compatibility fan}~$\Sigma_n$ of~$\Lambda_n$ is the collection of cones in the space of $g$-vectors which
are spanned by a collection of mutually compatible band brick $g$-vectors.
\end{definition}

It follows from results of C. Geiss, D. Labardini-Fragoso, and J. Schröer on tame algebras that
$\Sigma_n$ is indeed a fan \cite[Theorem 3.2]{GeissLabardiniSchroer} (see also~\cite[Theorem 3.8]{PlamondonYurikusa}). The following result is a reformulation in our special cases knowing that gentle algebras are tame.

\begin{proposition}\label{prop:fan} The set $\Sigma_n$ is a simplicial fan, whose support consists of the band semibrick $g$-vectors. 
  The band brick $g$-vectors which appear in the band semibrick decomposition of a band semibrick $g$-vector $\mathbf a$ are the rays of the minimal cone of $\Sigma_n$ containing $\mathbf a$.\end{proposition}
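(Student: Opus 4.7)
The plan is to derive all three assertions from the uniqueness statement embedded in \cref{corr:semibricks-gvec}, together with the already-cited fact that $\Sigma_n$ is a fan. The support claim will be immediate: \cref{corr:semibricks-gvec} characterizes band semibrick $g$-vectors by the partial-sum condition, and these are exactly the vectors lying in the union of the cones of $\Sigma_n$. Given such a $g$-vector $\mathbf{a}$, \cref{corr:semibricks-gvec} also supplies a multiset $\{\mathscr{F}_1,\dots,\mathscr{F}_r\}$ of one-parameter families of band bricks, unique up to permutation, whose $g$-vectors sum to $\mathbf{a}$. Let $h_1,\dots,h_s$ be the distinct values among these $g$-vectors and $m_1,\dots,m_s$ the corresponding positive integer multiplicities, so that $\mathbf{a}=\sum_{k=1}^{s} m_k h_k$.

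The core of the argument will be to prove that $h_1,\dots,h_s$ are linearly independent, from which simpliciality follows. I would argue by contradiction: suppose $\sum c_k h_k = 0$ is a nontrivial real relation. Choose a nonzero rational $t$ small enough in absolute value that $m_k + tc_k>0$ for every $k$, and a positive integer $N$ making every $N(m_k+tc_k)$ an integer. Then $N\mathbf{a}$ admits two \emph{a priori} distinct band semibrick decompositions, one with each family $\mathscr{F}_k$ used with multiplicity $Nm_k$, the other with multiplicity $N(m_k+tc_k)$. Both can be realized by honest band semibricks: within each $\mathscr{F}_k$ we pick the requisite number of pairwise-distinct generic parameters $\lambda$, and \cref{theo::band-morphisms} together with \cref{lem::crossings-are-morphisms} (applied to simple closed slaloms with no self-intersection) guarantees that distinct bricks from the same family have no morphisms between them. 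The uniqueness clause in \cref{corr:semibricks-gvec} then forces the two multiplicity vectors to coincide, yielding $tc_k=0$ for all $k$, a contradiction.

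For the minimal-cone claim, the cone $C=\operatorname{cone}(h_1,\dots,h_s)$ belongs to $\Sigma_n$ because the $h_k$ are pairwise compatible (they appear together as summands of the semibrick attached to $\mathbf{a}$), and it is simplicial by the previous step. Since $\mathbf{a}=\sum m_k h_k$ with every $m_k>0$, the point $\mathbf{a}$ sits in the relative interior of $C$; no proper face of $C$ contains it, and the fan property forbids another cone of $\Sigma_n$ from containing $\mathbf{a}$ without containing $C$. Hence $C$ is the minimal cone of $\Sigma_n$ containing $\mathbf{a}$, with rays exactly $h_1,\dots,h_s$. The main obstacle I anticipate is the linear-independence step: arguing directly from Dyck-path combinatorics is tempting but fragile, since different simple closed slaloms can contribute to overlapping collections of edges in intricate ways; the perturb-and-clear-denominators trick above circumvents this by transferring the question to the uniqueness statement in \cref{corr:semibricks-gvec}.
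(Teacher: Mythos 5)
Your proposal is correct, but it takes a genuinely different route from the paper: the paper offers no argument of its own for this proposition, simply citing \cite[Theorem 3.2]{GeissLabardiniSchroer} (see also \cite[Theorem 3.8]{PlamondonYurikusa}) for general tame algebras and calling the statement a reformulation in the special case of $\Lambda_n$. You instead keep only the bare fan property from that citation and derive simpliciality, the support description, and the minimal-cone characterization from the paper's own surface-model results, chiefly the uniqueness clause of \cref{corr:semibricks-gvec} (equivalently \cref{bandSemibricksgvectors}). Your perturbation trick --- replacing the multiplicities $Nm_k$ by $N(m_k+tc_k)$ to manufacture two band semibricks with equal $g$-vector but distinct multisets of one-parameter families --- is a clean way to extract linear independence, and in effect it gives an independent proof of \cref{basis}, which the paper instead deduces from \cref{prop:fan}. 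What the paper's approach buys is brevity and generality; what yours buys is a self-contained argument within the Dyck-path framework. Two small points to tighten: since the $h_k$ are integer vectors, a nontrivial real dependence among them may be replaced by a rational one, which you need before clearing denominators; and to conclude that \emph{every} cone of $\Sigma_n$ is simplicial you should apply your independence argument to $\mathbf{a}=\sum_k h_k$ for an arbitrary mutually compatible collection $h_1,\dots,h_s$ of band brick $g$-vectors, noting that the uniqueness in \cref{corr:semibricks-gvec} forces $\{h_1,\dots,h_s\}$ to be the decomposition of that $\mathbf{a}$.
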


The ensuing corollary is also needed for some arguments in the next subsection.

\begin{corollary} \label{basis}
  Let  $\mathbf{a}^1,\dots,\mathbf{a}^r$ be pairwise distinct $g$-vectors of mutually compatible
  bricks. Then $\mathbf{a}^1, \mathbf{a}^2, \dots, \mathbf{a}^r$ are linearly independent.
\end{corollary}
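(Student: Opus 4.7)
The plan is to derive the corollary directly from Proposition~\ref{prop:fan}. The point is that in a \emph{simplicial} fan, by definition every cone has linearly independent rays, and we just need to check that the hypothesized $\mathbf{a}^1, \ldots, \mathbf{a}^r$ really do generate (distinct) rays of a single cone of $\Sigma_n$.

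First I would observe that the cone $C = \mathbb{R}_{\geq 0}\mathbf{a}^1 + \cdots + \mathbb{R}_{\geq 0}\mathbf{a}^r$ lies in $\Sigma_n$: by hypothesis the $\mathbf{a}^i$ are $g$-vectors of mutually compatible band bricks, which is exactly the spanning condition in the definition of $\Sigma_n$. Next I would identify the rays of $C$ with the individual $\mathbf{a}^i$'s. For this, apply Proposition~\ref{prop:fan} to each $\mathbf{a}^i$ individually: the band semibrick decomposition of $\mathbf{a}^i$ (seen as a band semibrick $g$-vector via its own brick) is just the singleton $\{\mathbf{a}^i\}$, so the minimal cone of $\Sigma_n$ containing $\mathbf{a}^i$ is the ray $\mathbb{R}_{\geq 0}\mathbf{a}^i$. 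Hence each $\mathbf{a}^i$ spans a ray of $\Sigma_n$, and because the $\mathbf{a}^i$ are pairwise distinct these rays are pairwise distinct (rays in a fan intersect in $\{0\}$, so two rays coincide as soon as they share a nonzero vector, which would force the corresponding $g$-vectors to be equal since each band brick $g$-vector is the minimal lattice point on its ray).

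Finally I would invoke the simpliciality of $\Sigma_n$ asserted in Proposition~\ref{prop:fan}: every cone of $\Sigma_n$ is simplicial, meaning its rays form a linearly independent set. Applied to $C$, this yields the linear independence of $\mathbf{a}^1, \ldots, \mathbf{a}^r$.

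The only subtle step is the identification of each $\mathbf{a}^i$ with a ray, which is why one has to appeal to the ``minimal cone'' part of Proposition~\ref{prop:fan} rather than merely to the fact that $\Sigma_n$ is a fan. Once that is in hand, there is nothing more to do; the result is purely a formal consequence of the fan structure.
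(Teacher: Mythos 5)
Your argument is correct and is essentially the paper's own: the corollary is stated there without proof as an immediate consequence of \cref{prop:fan}, exactly along the lines you describe (the cone spanned by the $\mathbf{a}^i$ lies in $\Sigma_n$ by the definition of the compatibility fan, each $\mathbf{a}^i$ spans a ray of that cone by the minimal-cone statement, and simpliciality gives linear independence). The one assertion you make that the paper leaves implicit---that distinct band brick $g$-vectors cannot lie on a common ray, i.e.\ that each is the primitive lattice point of its ray---is true and can be justified from \cref{reconstruct}: a multislalom whose $g$-vector is a proper positive multiple of another $g$-vector is, by uniqueness of the reconstruction, a union of parallel copies of the smaller one's multislalom and hence disconnected, so it cannot correspond to a single band brick.
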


\subsection{The Euler form and the maximum number of mutually compatible bricks}

In this subsection, we apply the Euler form to establish the maximal dimension of a cone in $\Sigma_n$, or, which is the same thing, the maximum number of mutually compatible bricks from distinct one-parameter families (or equivalently, with distinct $g$-vectors).

\begin{lemma}\label{necc}
  In order for $\mathbf a$ and $\mathbf b$ to be the $g$-vectors of two compatible band bricks, we must have $\langle \mathbf a, \mathbf b\rangle = 0$. 
\end{lemma}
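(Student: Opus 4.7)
The plan is to apply \cref{prop:euler-hom-hom} directly. Recall that this proposition states that for any two band modules $X$ and $Y$ over a gentle algebra of finite global dimension,
\[
 \la g(X), g(Y) \ra = \dim \Hom(X,Y) - \dim \Hom(Y,X).
\]
Since $\Lambda_n$ is gentle and, as noted in \cref{euler}, has finite global dimension (the associated surface $S_n$ has no $\rpoint$- or $\gpoint$-punctures), this applies in our setting.

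First I would pick band bricks $B_{\mathbf{a}}$ and $B_{\mathbf{b}}$ realizing the $g$-vectors $\mathbf{a}$ and $\mathbf{b}$ respectively. By definition of compatibility (see \cref{ssec:brick} and the discussion before \cref{prop::briques-courbes}), we have
\[
 \Hom(B_{\mathbf{a}}, B_{\mathbf{b}}) = 0 = \Hom(B_{\mathbf{b}}, B_{\mathbf{a}}).
\]
Substituting $X = B_{\mathbf{a}}$ and $Y = B_{\mathbf{b}}$ into the formula above then yields
\[
 \la \mathbf{a}, \mathbf{b} \ra = \dim \Hom(B_{\mathbf{a}}, B_{\mathbf{b}}) - \dim \Hom(B_{\mathbf{b}}, B_{\mathbf{a}}) = 0-0 = 0,
\]
which is exactly the claim.

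There is essentially no obstacle here; the statement is an immediate corollary of \cref{prop:euler-hom-hom} combined with the definition of compatibility. The only subtle point to mention is that the value $\la g(X), g(Y) \ra$ depends only on the $g$-vectors and not on the choice of band bricks in their one-parameter families, so the conclusion makes sense as a statement about $\mathbf{a}$ and $\mathbf{b}$ rather than about specific representatives.
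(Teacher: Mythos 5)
Your proof is correct and is exactly the argument the paper intends: the lemma is an immediate consequence of \cref{prop:euler-hom-hom} together with the vanishing of $\Hom$ in both directions for compatible bricks. You have simply written out the one-line deduction the paper leaves implicit, including the (valid) remark that the pairing depends only on the $g$-vectors.
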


\begin{proof} This is immediate from Proposition \ref{prop:euler-hom-hom}.\end{proof}

Note that the converse is false in general.

\begin{example}
  Let $n=4$, $\mathbf{a}=(-1,-2,-2,5)$, $\mathbf{b}=(-3,0,-4,7)$ satisfy
  $\langle \mathbf{a},\mathbf{b}\rangle=0$. However, $\mathbf{a}$ and
  $\mathbf{b}$ are not compatible bricks. This can be seen either by constructing the corresponding modules and checking that there are morphisms between them, by drawing the corresponding curves and seeing that they must cross, or by discovering that $\frac12(\mathbf {a}+\mathbf{b})$ is itself a $g$-vector of a band brick, which would be impossible if the band bricks with dimension vectors
  $\mathbf a, \mathbf b$ were compatible.
\end{example} 

The following lemma is a crucial step towards one of our main results.

\begin{lemma} Let $V$ be an $n$-dimensional  vector space equipped with a
  non-degenerate bilinear form $\langle\cdot,\cdot\rangle$.
  Let $H$ be a codimension-one subspace of
  $V$ such that for $x,y\in H$, we have $\langle x,y\rangle=-
  \langle y,x\rangle$. The maximum dimension of an isotropic subspace in $H$ is
  $\lceil (n-1)/2 \rceil$.
\end{lemma}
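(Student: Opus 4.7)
The plan is to reduce the question to standard symplectic geometry by analyzing the radical of the restricted alternating form on $H$. First, I would set
\[
R = \{x \in H : \langle x, y\rangle = 0 \text{ for all } y \in H\},
\]
which, thanks to the skew-symmetry of the form on $H$, coincides with the corresponding right radical. Since $\langle \cdot, \cdot \rangle$ is non-degenerate on $V$ and $H$ has codimension $1$, the right perpendicular $H^{\perp} := \{x \in V : \langle y, x \rangle = 0 \text{ for all } y \in H\}$ has dimension exactly $1$. As $R = H \cap H^{\perp}$, we conclude $\dim R \in \{0, 1\}$.

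Next, the quotient $H/R$ inherits a well-defined non-degenerate alternating form, whose underlying space must therefore have even dimension. Since $\dim(H/R) = n - 1 - \dim R$, a parity argument forces $\dim R = 0$ when $n$ is odd, and $\dim R = 1$ when $n$ is even. Moreover, isotropic subspaces of $H$ correspond to those of $H/R$ once we note that any isotropic $W \subseteq H$ can be enlarged to $W + R$, which remains isotropic because $R$ pairs trivially with all of $H$; so without loss of generality $R \subseteq W$, and then $W/R$ is isotropic in $H/R$.

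To conclude, I invoke the standard fact that a non-degenerate alternating form on a $2k$-dimensional space has maximal isotropic subspaces of dimension exactly $k$. When $n$ is odd, $R = 0$ and the form on $H$ itself is non-degenerate alternating of even dimension $n-1$, so the maximum isotropic dimension equals $(n-1)/2$. When $n$ is even, $\dim R = 1$ and the maximum isotropic subspace of $H/R$ has dimension $(n-2)/2$; its preimage in $H$ has dimension $(n-2)/2 + 1 = n/2$. In both cases the answer is $\lceil (n-1)/2 \rceil$, as claimed.

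The only real obstacle is bookkeeping: controlling $\dim R$ using non-degeneracy of the global form, and checking the parity carefully to identify which of the two radical dimensions occurs for each parity of $n$. Beyond this, the result is immediate from the classification of alternating forms.
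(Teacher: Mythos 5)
Your proof is correct and follows essentially the same route as the paper's: identify the radical of the restricted form, bound its dimension by $1$ using non-degeneracy on $V$, and reduce to the standard fact about maximal isotropic subspaces for a non-degenerate alternating form (you pass to the quotient $H/R$ where the paper uses a direct sum decomposition $H=\mathcal{R}\oplus J$, a purely cosmetic difference). Your explicit parity bookkeeping determining $\dim R$ in terms of the parity of $n$ is a slight elaboration of a step the paper leaves implicit.
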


\begin{proof} Let $\Rad$ be the radical of the form restricted to $H$.
  For any vector $x\in \Rad$, the linear form on $V$ given by $\la x,-\ra$
  lies in the one-dimensional subspace of forms having $H$ as kernel.
  Thus, $\Rad$ is at most one-dimensional. Given a skew-symmetric form on $H$,
  we know that $H$ necessarily admits a decomposition as $\Rad \oplus J$,
  where the skew-symmetric form restricted to $J$ is non-degenerate, and
  $J$ is necessarily even-dimensional 
  \cite[Theorem XV.8.1]{Lang}.
  An isotropic subspace of $J$ is contained in its orthogonal, so its dimension is at most half the dimension of $J$.
  The maximum dimension of an isotropic subspace of $H$ is then
  half the dimension of $J$ plus the dimension of $\Rad$, which works out to
  $\lceil (n-1)/2 \rceil$. \end{proof}

\begin{thm} \label{maxsize} The maximum size of a set of mutually compatible band bricks from distinct one-parameter families is
  $\lceil (n-1)/2\rceil$. \end{thm}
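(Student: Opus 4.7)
The plan is to prove the bound in both directions. The upper bound will follow directly from the preceding lemma, once the $g$-vectors of the given bricks are identified as spanning an isotropic subspace of the appropriate hyperplane. The lower bound will be produced by an explicit family of compatible bricks constructed via the Dyck path model.

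For the upper bound, I would let $B_1,\dots,B_r$ be mutually compatible band bricks from distinct one-parameter families, with $g$-vectors $\mathbf{a}^1,\dots,\mathbf{a}^r$. Each $\mathbf{a}^i$ lies in $H := \{x\in\mathbb{R}^n : x_1+\cdots+x_n = 0\}$ (as noted in the example following \cref{prop:euler-hom-hom}); the $\mathbf{a}^i$ are linearly independent by \cref{basis}; and \cref{necc} together with the skew-symmetry of $\langle\cdot,\cdot\rangle$ on $H$ gives $\langle\mathbf{a}^i,\mathbf{a}^j\rangle = 0$ for all $i,j$. Thus the $\mathbf{a}^i$ span an $r$-dimensional isotropic subspace of $H$. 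Since the Gram matrix $(\langle e_i,e_j\rangle)_{i,j}$ is upper triangular with $1$'s on the diagonal, the Euler form is non-degenerate on $\mathbb{R}^n$; the preceding lemma applies and yields $r\leq\lceil (n-1)/2\rceil$.

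For the lower bound, for each $1\leq k\leq\lceil (n-1)/2\rceil$ I would take $\mathbf{a}^{(k)} := -e_k + e_{n+1-k}$, noting that $k < n+1-k$ throughout this range. Each $\mathbf{a}^{(k)}$ satisfies the partial-sum conditions of \cref{corr:semibricks-gvec}, and in the Dyck path model of \cref{sec:dyck_path_model} it is encoded by a single up-step labelled $k$ followed by a single down-step labelled $n+1-k$, giving a single simple closed slalom; by \cref{prop::briques-courbes} the associated $\Lambda_n$-module is a band brick. For $k<l$, the strict chain $k<l<n+1-l<n+1-k$ allows the encodings to be superimposed into one nested, pairwise non-crossing multislalom whose components are exactly the slaloms attached to the $\mathbf{a}^{(k)}$; \cref{prop::briques-courbes} then says that the direct sum of the corresponding bricks is a band semibrick, so these $\lceil (n-1)/2\rceil$ bricks are mutually compatible.

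The main obstacle I anticipate lies in verifying compatibility in the lower-bound construction: as highlighted in the example following \cref{necc}, orthogonality under the Euler form is necessary but not sufficient, so the argument must appeal to the geometric (Dyck path / multislalom) model rather than to a purely algebraic calculation of Hom-spaces.
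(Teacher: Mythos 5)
Your proposal is correct, and the upper-bound argument is exactly the paper's: the $g$-vectors of the given bricks are linearly independent (\cref{basis}), pairwise orthogonal (\cref{necc}), and lie in the hyperplane $x_1+\cdots+x_n=0$ on which the Euler form is skew-symmetric while being non-degenerate on all of $\bZ^n$, so the isotropic-subspace lemma bounds their number by $\lceil (n-1)/2\rceil$. The one genuine difference is that the paper's proof stops at the upper bound and delegates attainability to an unproved example (using the family $\mathbf{b}^i=-2e_1+e_{i+1}+e_{n-i+1}$, plus $-e_1+e_{n/2+1}$ for $n$ even), whereas you supply a complete lower-bound argument with the nested family $-e_k+e_{n+1-k}$; your verification via the Dyck path model and \cref{prop::briques-courbes} is sound, since the combined $g$-vector reconstructs to the nested multislalom whose components are precisely your individual slaloms, and you are right that one must argue geometrically rather than via Euler-form orthogonality alone. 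Your version is thus slightly more complete than the paper's, at no real extra cost.
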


\begin{proof}
We know that~$g$-vectors of  band semibricks lie in the codimension one subspace of $\bZ^n$ where the sum of all coordinates is zero, by \cref{prop:conditionGvector}.
We have seen that, restricted to this subspace, the Euler form is
skew-symmetric (\cref{prop:euler-hom-hom}), while on $\bZ^n$, it is non-degenerate.
The above lemma shows that the maximum dimension of an
isotropic subspace is $\lceil (n-1)/2 \rceil$. By Corollary \ref{basis} and Lemma \ref{necc},
the $g$-vectors of mutually compatible band bricks from distinct
one-parameter families form the basis of an
isotropic subset. There are thus at most $\lceil (n-1)/2 \rceil$ distinct
band bricks in the set.\end{proof}

\begin{example}
  Consider for $1\leq i \leq \lfloor (n-1)/2\rfloor$, $$\mathbf{b}^i = (-2,0, \ldots,0, \underset{i+1}{1}, 0 \ldots, 0, \ldots,0, \underset{n-i+1}{1}, 0, \ldots, 0)$$ and if $n$ is even, $\mathbf{b}^{n/2} = (-1, 0,\cdots,0, \underset{(n/2)+1}{1},0, \cdots, 0)$. Then $\mathbf{b}^1, \ldots, \mathbf{b}^{\lceil (n-1)/2\rceil}$  is a maximal family of mutually compatible band brick $g$-vectors.
\end{example}

One of our main results now follows from \cref{prop:curves2words}. 

\bigskip 

\begin{corollary}~\label{length}
  Let $(a_2,\dots a_n)$ be a vector of non-negative integers. The number of distinct primitive
  circular words
  in $\Phi(n^{a_n} \dots 2^{a_2})$ is at most $\lceil (n-1)/2 \rceil$.
\end{corollary}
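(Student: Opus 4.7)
The plan is to combine \cref{maxsize}, bounding the number of pairwise compatible band bricks from distinct one-parameter families by $\lceil (n-1)/2 \rceil$, with \cref{prop:curves2words}, which identifies $\Phi(n^{a_n}\cdots 2^{a_2})$ with the multiset $f(M_{\mathbf{g}})$ of circular words attached to a simple closed multislalom.

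First, I would set $a_1 := -\sum_{i=2}^{n} a_i$, so that the $n$-tuple $\mathbf{g} = (a_1, a_2, \ldots, a_n)$ satisfies the partial-sum conditions of \cref{prop:conditionGvector}. By \cref{corr:semibricks-gvec}, $\mathbf{g}$ is then the $g$-vector of a simple closed multislalom $\mathcal{C}$ on $S_n$ and, generically, of a band semibrick $X = \bigoplus_{i=1}^{r} B_i$ with $B_i$ chosen from one-parameter families $\mathscr{F}_i$. By \cref{prop::briques-courbes}, the distinct families among $\mathscr{F}_1, \ldots, \mathscr{F}_r$ are in bijection with the distinct homotopy classes of components of $\mathcal{C}$, and any two non-isomorphic summands of a semibrick admit no non-zero morphisms between them, so these distinct families are pairwise compatible in the sense of \cref{sec:orthogonal}. \Cref{maxsize} then bounds the number of such distinct families by $\lceil (n-1)/2 \rceil$.

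It remains to transfer this bound to circular words. By \cref{prop:curves2words}, one has the equality of multisets $f(M_{\mathbf{g}}) = \Phi(n^{a_n}\cdots 2^{a_2})$, where $f$ is the morphism erasing the letter~$1$. In the Dyck-path construction of \cref{sec:dyck_path_model}, applied to $\mathbf{g}$ with $a_1 < 0$ and all other entries non-negative, every circular word in $M_{\mathbf{g}}$ has the alternating form $(1\,x_1\,1\,x_2\,\cdots\,1\,x_k)$ with $x_j \in \{2, \ldots, n\}$. Consequently $f$ is injective on such words, and homotopic components of $\mathcal{C}$ produce identical circular words. The number of distinct primitive circular words in $\Phi(n^{a_n}\cdots 2^{a_2})$ therefore equals the number of distinct homotopy classes of components of $\mathcal{C}$, which is at most $\lceil (n-1)/2 \rceil$. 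The main technical point will be verifying the injectivity of $f$ and the homotopy-to-word correspondence, but both reduce to direct inspection of the Dyck-path picture; the degenerate case in which all $a_i$ vanish yields the empty multiset and satisfies the bound vacuously.
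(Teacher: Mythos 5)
Your proposal is correct and takes essentially the same route as the paper: both form the $g$-vector $(-\sum_{i\ge 2}a_i, a_2,\ldots,a_n)$, identify the distinct primitive circular words of $\Phi(n^{a_n}\cdots 2^{a_2})$ with the distinct one-parameter families of band-brick summands of the corresponding band semibrick via \cref{prop:curves2words}, and then apply \cref{maxsize}. The extra details you supply (injectivity of the erasing morphism $f$ on the alternating circular words, and the homotopy-class bookkeeping) are correct elaborations of what the paper leaves implicit.
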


\begin{proof}
  This follows from \Cref{maxsize} since the distinct primitive circular words in $\Phi(n^{a_n} \dots 2^{a_2})$ correspond by Theorem \ref{prop:curves2words} to the one-parameter families of brick summands of a band semibrick with $g$-vector
  $(-a_2-a_3 -\dots-a_n,a_2,\dots a_n)$.
\end{proof}

\Cref{length} implies Conjecture 3.33 of \cite{L2020} (cited as \cref{lengthconj} in the introduction), since the number of distinct lengths of primitive circular words is obviously bounded above by the number of distinct primitive circular words. Note that Conjecture 3.33 of \cite{L2020} is stated in terms of the structure of the discrete interval exchange transformation which is equivalent to the structure of multisets of circular words given by $\Phi$ as shown by \cite[Theorem 3.8]{L2020}.

\subsection{Characterizing band brick \texorpdfstring{$g$}{g}-vectors for \texorpdfstring{$n=4$}{n=4}}
The following result gives us a characterization of the $g$-vectors that are associated to a one-parameter family of band bricks of $\Lambda_n$ for $n=4$.

\begin{thm}\label{thm:bricksfour}
Let $g = (a,b,c,d)$ be a $g$-vector. Then $g$ corresponds to an one-parameter family of band bricks over $\Lambda_n$ if and only if
\begin{center}
$\mathsf{gcd}(a+b, b+c) = 1$ and $a+b\ne 0$, or $(a,b,c,d)=(-1,1,0,0)$ or $(0,0,-1,1)$.
\end{center} 
\end{thm}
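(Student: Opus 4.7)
By Corollary~\ref{corr:semibricks-gvec} and Proposition~\ref{prop:fan}, $g = (a,b,c,d)$ corresponds to a one-parameter family of band bricks if and only if the simple closed multislalom $\mathcal{C}$ on $S_4$ determined by $g$ consists of a single primitive closed curve. I would verify this by working in the Dyck path model of Section~\ref{sec:dyck_path_model}: the chord matching $\pi_1$ inside a polygon is the unique non-crossing matching extracted from the Dyck path, and the two polygons $\mathsf{P}_1, \mathsf{P}_2$ are glued by the orientation-reversing identification, which on the level of crossings corresponds to the position-reversal $\rho$ on each edge. The components of $\mathcal{C}$ are then precisely the orbits of the permutation $\sigma = \rho \circ \pi_1$ on the finite set of crossings, so the problem reduces to a combinatorial orbit count.

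The argument splits naturally into two cases. \emph{Case 1:} If $a+b \neq 0$, the Dyck path does not return to zero after $E_2$, so the chords form a single arch structure (the subcase $a+b+c = 0$ forces $d=0$ and reduces to a three-edge arch, treated by the same method). I would compute $\sigma^2$ restricted to the up-step crossings and exhibit it as an interval exchange whose three shifts are $b+c$, $b-d$ and $-(c+d)$, each congruent to zero modulo $\gcd(a+b,b+c)$; a careful bookkeeping then shows that the number of $\sigma$-orbits is $|\gcd(a+b,b+c)|$, so $\mathcal{C}$ is a single curve precisely when this gcd equals $1$. \emph{Case 2:} If $a+b = 0$, the Dyck path touches zero after $E_2$, forcing $a = -b$, and $\mathcal{C}$ decomposes as a disjoint union of two independent sub-multislaloms: the first, coming from $(a,b,0,0)$, is a pure mountain of $|a|$ chords tracing out $|a|$ curves, and the second, coming from $(0,0,c,d)$ with $c+d=0$, similarly contributes $|c|$ curves. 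For $\mathcal{C}$ to be a single brick we must have $|a|+|c| = 1$, forcing one of the two exceptional $g$-vectors $(-1,1,0,0)$ or $(0,0,-1,1)$.

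The main obstacle is establishing the orbit count in Case~1. The interval exchange $\sigma^2$ has distinct shifts on its three subintervals, so the orbits are not simply cosets of a subgroup of $\mathbb{Z}/|a|\mathbb{Z}$; nevertheless, since the three shifts differ only by multiples of $b+c$ and $c+d$, the value $\gcd(a+b,b+c)$ must divide the net shift of every orbit, providing a lower bound on the number of orbits. Matching this bound to the actual orbit count requires a direct combinatorial analysis, which can be carried out by induction on $|a|$ or by an explicit bijection between orbits and certain invariants of $\sigma$; the sign pattern of $b$, $c$, $d$ affects only the shift values and not the structure of the argument, so the general formula follows uniformly. The delicate point is ruling out extra orbits beyond the gcd lower bound, which amounts to showing that $\sigma$ moves freely within each fiber of this natural invariant.
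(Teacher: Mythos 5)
Your reduction is sound: by \cref{corr:semibricks-gvec}, \cref{prop::briques-courbes} and \cref{reconstruct}, the $g$-vector $g$ corresponds to a one-parameter family of band bricks exactly when the multislalom $\mathcal{C}$ it determines is a single simple closed curve, and counting components of $\mathcal{C}$ as orbits of the permutation obtained by composing the non-crossing matching with the orientation-reversing gluing is a legitimate strategy --- it is essentially the meander-theoretic route of Coll--Giaquinto--Magnant, and your computation of the three shifts $b+c$, $b-d$, $-(c+d)$ agrees with the bookkeeping in the proof of \cref{prop:curves2words}. Your Case 2 ($a+b=0$) is also correct in outline. However, there is a genuine gap at the heart of Case 1: the entire content of the hard direction of the theorem is the claim that the number of orbits equals $|\gcd(a+b,b+c)|$, and you do not prove it. The divisibility of the three shifts by $\gcd(a+b,b+c)$ only shows that each orbit is confined to a residue class, giving the lower bound; the assertion that each residue class is a \emph{single} orbit --- which you yourself flag as ``the delicate point'' and defer to ``a direct combinatorial analysis'' by unspecified induction or bijection --- is precisely what must be established, and nothing in the proposal does so. A second weakness is the claim that the sign pattern of $b,c,d$ ``affects only the shift values and not the structure of the argument'': the shape of the Dyck path, and hence the non-crossing matching $\pi_1$, changes qualitatively between the cases $b,c<0$, $b<0<c$, and $c<0<b$, which is exactly why these are treated as separate cases elsewhere (you also omit the degenerate subcase $a=0$, where the path touches the axis after $E_1$).

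For comparison, the paper avoids the orbit count entirely. It uses \cref{maxsize} and the Euler form: the radical of the form on the hyperplane $\sum x_i=0$ is spanned by $(-1,1,-1,1)$, so compatible pairs of bricks must lie in a two-dimensional isotropic plane $V$ containing this radical. Inside each such $V$, the vector $v$ generating the second extreme ray of $V$ intersected with the $g$-vector cone, together with $(-1,1,-1,1)$, forms a lattice basis (\cref{lattice-basis}); the points $v+p(-1,1,-1,1)$, $p\ge 0$, are shown to be indecomposable in the monoid sense and hence bricks, consecutive ones are shown to be compatible by an explicit drawing of non-crossing curves, and the fan structure of \cref{prop:fan} then forces these to be \emph{all} the brick $g$-vectors in $V$; the $\gcd$ condition drops out of the lattice-basis computation. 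If you want to complete your route instead, you must supply the transitivity argument on each residue class (or an equivalent induction on $|a|$), handled separately for each sign pattern of $(b,c)$; as it stands the proposal establishes only that $\gcd(a+b,b+c)=1$ is \emph{necessary} for $\mathcal{C}$ to be connected when $a+b\ne 0$, not that it is sufficient.
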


This result can be reformulated in the language of meanders, and part of
the theorem has already been proven in those terms by Coll, Giaquinto, and
Magnant \cite{CGM}. 
The case where $a,b<0$; $c,d>0$ corresponds to what they call
the \newword{opposite maximal parabolic} case, while the cases 
where
$a<0$ and $b,c,d>0$ or $a,b,c<0$; $d>0$ correspond to what they call the
\newword{submaximal parabolic} case.
\cite[Corollary 4.9, Theorem 4.10]{CGM} prove those two parts of our theorem
(respectively). The case $a,c<0$; $b,d>0$ is new. We provide a uniform
proof which covers all three cases.

\begin{proof}
The $g$-vectors of semibricks consist of the lattice points in
a full-dimensional cone in the
three-dimensional space $H=\{(x_1,x_2,x_3,x_4)\mid \sum_{i=1}^4 x_i=0\}$.
The Euler form, restricted to $H$,
has a radical, which is generated by
$(-1,1,-1,1)$.

Applying Theorem \ref{maxsize} to the case $n=4$, we see that the maximal size
of a set of mutually compatible bricks is two.
Further, the $g$-vectors
of a pair of compatible bricks must lie in an isotropic subspace of $H$.
According to the above analysis, the isotropic subspaces of $H$
are just the two-dimensional subspaces which
contain the radical.

We therefore focus our attention on a two-dimensional subspace $V$ containing
the radical.

The
direction $(-1,1,-1,1)$ itself lies in the relative interior of a facet of
the cone of $g$-vectors. Thus, the intersection of $V$ with the
$g$-vector cone is necessarily two-dimensional.

One special case is when $V$ is the plane spanned by the facet of the
cone containing the radical. This is the plane where $a+b=0$. The extreme rays of this facet are
$(-1,1,0,0)$ and $(0,0,-1,1)$. The corresponding bricks are orthogonal, and
so all the lattice points on this facet (including those in the radical)
decompose as a linear combination of these two. This confirms the part of
the statement of the theorem that applies when $a+b=0$.

In the remaining cases, we see that the intersection of $V$ with the $g$-vector cone has the ray generated by $(-1,1,-1,1)$ as one of its
two extreme rays. The other ray is necessarily generated by some integer
vector $v$ lying on one of the two faces of the $g$-vector cone not
containing the radical, i.e., having its first coordinate or last coordinate
zero. We choose $v$ so that it generates the subgroup of lattice points on the
ray.

Let $C$ be the cone generated by $v$ and $(-1,1,-1,1)$, with the ray
$(-1,1,-1,1)$ removed.

\begin{lemma}\label{lattice-basis} The vectors $v$ and $(-1,1,-1,1)$ are a basis for
  the ambient lattice
  $\mathbb Z^4 \cap V$. \end{lemma}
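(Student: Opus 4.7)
The plan is to show that an arbitrary lattice point $w \in \bZ^4 \cap V$ is an integer combination of $v$ and $(-1,1,-1,1)$.  Since these two vectors are linearly independent inside the two-dimensional space $V$, I can at least write $w = \alpha v + \beta(-1,1,-1,1)$ for some $\alpha, \beta \in \mathbb{Q}$, and the task reduces to forcing $\alpha,\beta \in \bZ$.

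The main input I will use is the way $v$ was produced just before the lemma.  Inside $H$, the $g$-vector cone $C$ is cut out by the inequalities $x_1 \leq 0$, $x_1 + x_2 \leq 0$, $x_1 + x_2 + x_3 \leq 0$ (coming from \cref{prop:conditionGvector}), so its facets lie on the three hyperplanes $\{x_1 = 0\}$, $\{x_1 + x_2 = 0\}$, and $\{x_4 = 0\}$, and the radical sits in the relative interior of the middle one.  Since we are in the non-special case $V \neq \{x_1 + x_2 = 0\} \cap H$, the ray through $v$ cannot lie on the facet containing the radical; as $v$ is nevertheless on the boundary of $C$, it must lie on $\{x_1 = 0\}$ or on $\{x_4 = 0\}$.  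In other words, $v$ has either its first or its fourth coordinate equal to $0$.

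This is precisely what pins down $\beta$.  Reading off the relevant coordinate of $w = \alpha v + \beta(-1,1,-1,1)$ yields either $w_1 = -\beta$ or $w_4 = \beta$, and in both cases the integrality of the coordinates of $w$ forces $\beta \in \bZ$.  Then $\alpha v = w - \beta(-1,1,-1,1)$ is itself a lattice point on the ray through $v$, and by the choice of $v$ as a generator of the subgroup of lattice points on that ray, $\alpha \in \bZ$ too.  Combined with the linear independence of $v$ and $(-1,1,-1,1)$, this exhibits them as a $\bZ$-basis of $\bZ^4 \cap V$.

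I do not expect a serious obstacle: the argument is a clean instance of the principle that a primitive lattice vector, together with a primitive vector whose ray is supported on a coordinate hyperplane, form a $\bZ$-basis of the rank-two lattice they span.  The only point deserving care is the geometric claim that the extreme ray of $V \cap C$ different from the radical really lies on one of the two non-radical facets of $C$, and this is just a matter of unpacking the facet description coming from \cref{prop:conditionGvector}.
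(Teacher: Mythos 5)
Your proof is correct and follows essentially the same route as the paper's: both arguments rest on the observation that $v$ lies on a facet where the first or last coordinate vanishes, so that subtracting the appropriate integer multiple of $(-1,1,-1,1)$ from a lattice point of $V$ leaves a lattice point on the line through $v$, which is then an integer multiple of $v$ by primitivity. You simply spell out in more detail the facet description of the cone that the paper takes as already established in the paragraph preceding the lemma.
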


\begin{proof} Without loss of generality, suppose that the multiples of
  $v$ are the vectors with first coordinate zero. Given any integer vector in
  $V$, by subtracting a suitable integer
  multiple of $(1,-1,1,-1)$, we may arrange that
  its first coordinate be zero, which shows that what is left is an
  integer multiple of $v$.
  \end{proof}

\begin{proposition} The vectors $\mathcal D = \{v+p(-1,1,-1,1)\mid p\in \mathbb Z_{\geq 0}$
  are brick $g$-vectors.\end{proposition}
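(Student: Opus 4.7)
The plan is to argue by contradiction: set $\mathbf{g} := v + p(-1,1,-1,1)$ and suppose $\mathbf{g}$ is not a band brick $g$-vector. First, I would observe that $\mathbf{g}$ lies in $C$ (its coefficient on $v$ in the basis from \cref{lattice-basis} is $1$), so by \cref{corr:semibricks-gvec} it is a band semibrick $g$-vector, and its coordinates $(1,p)$ have gcd $1$, making $\mathbf{g}$ primitive in $\bZ^4 \cap V$. Under the contradiction hypothesis, \cref{prop:fan} together with \cref{maxsize} shows that the minimal cone $\sigma$ of $\Sigma_4$ containing $\mathbf{g}$ is $2$-dimensional, with $\mathbf{g}$ in its relative interior; I denote by $\mathbf{b}^1, \mathbf{b}^2$ the primitive band brick $g$-vectors spanning the two rays of $\sigma$.

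The heart of the argument will be to trap both $\mathbf{b}^1$ and $\mathbf{b}^2$ inside $V$. Because they are compatible, \cref{prop:euler-hom-hom} gives $\la \mathbf{b}^1, \mathbf{b}^2 \ra = \la \mathbf{b}^2, \mathbf{b}^1 \ra = 0$, so $\mathrm{span}(\mathbf{b}^1, \mathbf{b}^2)$ is a $2$-dimensional isotropic subspace for the skew-symmetric Euler form restricted to $H$. Since that form's radical is the line through $(-1,1,-1,1)$, every such isotropic plane must contain $(-1,1,-1,1)$; combined with the fact that the plane also contains $\mathbf{g}$, this forces it to contain $v$ as well, so the plane equals $V$. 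Then \cref{lattice-basis} writes $\mathbf{b}^i = a_i v + b_i(-1,1,-1,1)$ with $a_i, b_i \in \bZ$. Since $\mathbf{b}^i$ is a brick in the closure of $C$, and no positive multiple of $(-1,1,-1,1)$ can be a brick $g$-vector (each such multiple visibly decomposes along the extreme rays $(-1,1,0,0)$ and $(0,0,-1,1)$ of the facet $a+b=0$, as already observed just before \cref{lattice-basis}), we get $a_i \geq 1$ and $b_i \geq 0$.

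To finish, the band semibrick decomposition of $\mathbf{g}$ inside $\sigma$ expresses $\mathbf{g} = n_1 \mathbf{b}^1 + n_2 \mathbf{b}^2$ with $n_i \in \bZ_{\geq 0}$, and $\mathbf{g}$ lying in the relative interior of $\sigma$ forces $n_1, n_2 \geq 1$; reading off the $v$-coordinate in the basis $(v, (-1,1,-1,1))$ yields the contradiction $1 = n_1 a_1 + n_2 a_2 \geq 2$. The main obstacle will be the second paragraph: \emph{a priori} the hypothetical bricks $\mathbf{b}^1, \mathbf{b}^2$ could sit anywhere in $H$, and the whole content of the proof is to exploit the three-way interaction between the compatibility condition, the $1$-dimensional radical of the skew-symmetric Euler form on $H$, and the position of $\mathbf{g}$ in order to squeeze them into $V$. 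Once that is done, \cref{lattice-basis} together with a one-line integer inequality closes the argument, and the same proof handles the base case $p=0$ where $\mathbf{g}=v$.
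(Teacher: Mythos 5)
Your proof is correct and follows essentially the same route as the paper's: you show the brick summands of any decomposition must lie in $V$ (because the only isotropic plane containing a point of $C$ is $V$, via the one-dimensional radical), exclude the ray of $(-1,1,-1,1)$ as unavailable, and then use \cref{lattice-basis} to see that a point with $v$-coordinate $1$ admits no non-trivial decomposition into lattice points of $C$. Your write-up is merely a more detailed, contradiction-phrased version of the paper's three-line argument, with the helpful extra observation that primitivity of $v+p(-1,1,-1,1)$ rules out the minimal cone being a ray.
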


\begin{proof} For any $g$-vector in $C$,
  the only isotropic plane
  containing it is $V$, so the bricks in its orthogonal brick decomposition
  must also correspond to lattice points in $V$. Further, the lattice points
  in the ray generated by $(-1,1,-1,1)$ are not available, since they do not
  correspond to $g$-vectors of bricks.
  The points of $\mathcal D$ admit no non-trivial
  decomposition as a sum of lattice points from $C$, so they must all be
  bricks.
\end{proof}

We now establish that the bricks corresponding to two consecutive points in
$\mathcal D$ are compatible, by showing that it is possible to draw the
corresponding curves on $S_4$ in such a way that they do not cross each
other.

\begin{lemma} Let $(a,b,c,d)$ and $(a+1,b-1,c+1,d-1)$ be two consecutive
elements of $\mathcal D$. It is possible to draw curves with these two
$g$-vectors on $S_4$ in such a way that they do not intersect. \end{lemma}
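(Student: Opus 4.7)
The approach I would take is to explicitly construct a disjoint drawing of the two curves $\gamma_1, \gamma_2$ on $S_4$ using the Dyck path model of \cref{sec:dyck_path_model}, and then invoke the crossings-versus-morphisms corollary in \cref{euler} to conclude compatibility. First I would observe that since the two $g$-vectors differ by $(1,-1,1,-1) = -r$, where $r = (-1,1,-1,1)$ generates the radical of the Euler form restricted to $H$, we have $\la g_1, g_2 \ra = 0$ by \cref{prop:euler-hom-hom}. Consequently $\dim \Hom(X_1, X_2) = \dim \Hom(X_2, X_1)$, and the total number of crossings between $\gamma_1$ and $\gamma_2$ equals $2\dim \Hom(X_1, X_2)$; hence it suffices to exhibit a disjoint drawing.

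Next I would analyze the Dyck path structure of each $g$-vector. In the generic sign pattern $a,c<0$; $b,d>0$ (the new case in the statement), the Dyck word for each $g_j$ takes the shape $u^{|a|}d^{b}u^{|c|}d^{d}$, and the two words differ by exactly one step in each of the four blocks. The nested matching on $\mathsf{P}_1$ then produces a common family of $(1,4)$-arcs whose count $|a|-b$ is invariant under shifting by $r$, together with nested $(1,2)$- and $(3,4)$-arcs whose counts differ by one. Thus the two curves share the same outer arc skeleton on $\mathsf{P}_1$, and differ only by one additional innermost arc in each of the $(1,2)$- and $(3,4)$-families. The other sign cases (opposite- and sub-maximal parabolic) would be handled by analogous case analyses.

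The disjoint drawing would then be obtained by placing the crossings of $\gamma_1$ and $\gamma_2$ on each edge $E_i$ of $\mathsf{P}_1$ in two adjacent blocks (one per curve), with the two extra innermost arcs of one curve fitting into a small region near the boundary $E_1,\ldots,E_4$ of $\mathsf{P}_1$ without interfering with the other curve's arcs. One then verifies that the combined arc configuration is non-crossing in $\mathsf{P}_1$ and that the drawing extends consistently to $\mathsf{P}_2$ (whose arc-matching differs from that of $\mathsf{P}_1$ because of the orientation reversal along glued edges), producing two disjoint closed curves on $S_4$.

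The hardest part will be the explicit combinatorial verification of non-crossing, together with the bookkeeping needed to ensure consistency between $\mathsf{P}_1$ and $\mathsf{P}_2$. One must also treat carefully the non-generic sign patterns of $v$ and the small-$p$ cases in which some entries of $v$ or of the $g$-vectors vanish, so that the Dyck word shape and its matching degenerate in controlled ways.
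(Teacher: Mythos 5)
Your overall strategy is the same as the paper's: work in the Dyck path model, draw the curve $C$ with $g$-vector $(a,b,c,d)$ explicitly, and then fit the curve $C'$ with $g$-vector $(a+1,b-1,c+1,d-1)$ into the same picture so that the two are disjoint, with a case analysis on the signs of $b$ and $c$. Your bookkeeping of the arc families in the case $a,c<0$, $b,d>0$ (the counts $|a|-b$ of $(1,4)$-arcs being invariant, and the $(1,2)$- and $(3,4)$-families each losing one arc) is correct. The preliminary paragraph about the Euler form, however, does no work here: the lemma asks only for a disjoint drawing, and once you have one you are done, so the reduction ``number of crossings $=2\dim\Hom$, hence it suffices to exhibit a disjoint drawing'' is just a restatement of the goal.

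The genuine gap is in the proposed placement of the crossing points. You suggest putting the intersections of $\gamma_1$ and $\gamma_2$ with each edge $E_i$ of $\mathsf{P}_1$ ``in two adjacent blocks (one per curve).'' This forces crossings. Concretely, in the case $a,c<0$, $b,d>0$, suppose all of $C'$'s points on $E_1$ sit above all of $C$'s points (closer to the peak). Let $x_1\in E_1$ be the topmost point of $C$ on $E_1$; it is the endpoint of $C$'s innermost $(1,2)$-arc, with other endpoint $x_2\in E_2$. Let $y_1\in E_1$ be the endpoint of $C'$'s $(1,4)$-arc and $y_4\in E_4$ its other endpoint. In the cyclic order along the boundary of $\mathsf{P}_1$ we then have $x_1<y_1<x_2<y_4$, so the pairs $\{x_1,x_2\}$ and $\{y_1,y_4\}$ interleave and the two arcs must intersect. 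The construction that actually works --- and is the content of the paper's proof --- is to \emph{interleave} the crossing points: on each edge, each point of $C'$ is placed strictly between two consecutive points of $C$ (with none below the bottommost or above the topmost of $C$'s points, in the appropriate sense for each sign case). With that placement the horizontal-line structure on both $\mathsf{P}_1$ and $\mathsf{P}_2$ keeps the curves disjoint, the consistency between the two polygons being automatic because interleaving is preserved under the orientation-reversing gluing. Since you defer precisely this verification as ``the hardest part'' and your stated placement would fail it, the core of the proof is missing as written.
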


\begin{proof}
Suppose first that $b$ and $c$ are negative.
The Dyck path for $(a,b,c,-(a+b+c))$ consists of
$a+b+c$ upsteps followed by $a+b+c$ downsteps. The curve $C$ with
$g$-vector $(a,b,c,d)$
consists of horizontal lines at height $i+\frac12$ for $0\leq i\leq -a-b-c-1$.
We then check that it is possible to draw a curve $C'$ with $g$-vector $(a+1,b-1,c+1,d-1)$, so that each
of its lines lies between successive lines of $C$. Position
the righthand endpoints of $C'$
at the midpoints of the segments resulting from dividing the lefthand
side of the surface into $-a-b-c-1$ equal segments. For the lefthand side,
we first divide it into a bottom part of length $-a$, followed by a part of length
$-b$, followed by a part of length $-c$. We divide the part of length $-c$ into
$-c-1$ equal-sized segments, and put right endpoints at the midpoint of each
of these segments; we then treat the second and third parts similarly.
We observe that we have placed one lefthand endpoint and one righthand
endpoint between each successive pair of horizontal lines of $C$ (but neither below the bottommost endpoints on either side, nor above the topmost), meaning that it is possible to draw $C'$ in such a way as not to intersect $C$.
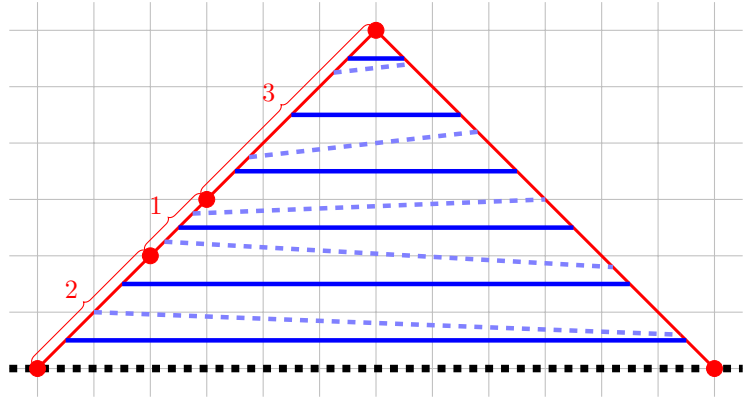
\begin{figure}
\begin{tikzpicture}[scale=.75]
\draw[step=1.0,gray!50,thin] (-0.5,-0.5) grid (12.5,6.5);\draw[line width=1mm, black,dashed] (-0.5,0)--(12.5,0);
  \draw[line width=0.4mm, red] (0,0)--(6,6)--(12,0);
  \draw[line width=0.6mm, blue] (.5,.5)--(11.5,.5);
  \draw[line width=0.6mm, blue] (1.5,1.5)--(10.5,1.5);
  \draw[line width=0.6mm, blue] (2.5,2.5)--(9.5,2.5);
  \draw[line width=0.6mm, blue] (3.5,3.5)--(8.5,3.5);
  \draw[line width=0.6mm, blue] (4.5,4.5)--(7.5,4.5);
  \draw[line width=0.6mm, blue] (5.5,5.5)--(6.5,5.5);
  \draw[line width=0.6mm, blue!50,dashed] (1,1)--(11.4,.6);
  \draw[line width=0.6mm, blue!50,dashed] (2.25,2.25)--(10.2,1.8);
  \draw[line width=0.6mm, blue!50,dashed](2.75,2.75)--(9,3);
  \draw[line width=0.6mm, blue!50,dashed] (3.75,3.75)--(7.8,4.2);
  \draw[line width=0.6mm, blue!50,dashed] (5.25,5.25)--(6.6,5.4);
  \draw[decorate, decoration=brace,red](-0.1,0.1)--(1.9,2.1);
  \node[red] (a) at (.6,1.4) {2};
  \draw[decorate, decoration=brace,red](1.9,2.1)--(2.9,3.1);
  \node[red] (b) at (2.1,2.9) {1};
  \draw[decorate, decoration=brace,red](2.9,3.1)--(5.9,6.1);
  \node[red] (c) at (4.1,4.9) {3};
  \draw[red,fill] (0,0) circle (4pt);
    \draw[red,fill] (6,6) circle (4pt);
    \draw[red,fill] (12,0) circle (4pt);
    \draw[red,fill] (2,2) circle (4pt);
    \draw[red,fill] (3,3) circle (4pt);
    
\end{tikzpicture}
  \caption{The surface for $g$-vector $(-2,-1,-3,6)$, with the corresponding arcs drawn in blue, and with the arcs for
    $(-1,-2,-2,5)$ superimposed as dotted lines, as in the first case of the proof} 
\end{figure}

Clearly, if $b$ and $c$ are both negative, a completely equivalent argument can be made.

Suppose next that $b$ is negative and $c$ is positive. We proceed very much
as in the previous case. As before, we consider the Dyck path model of the
surface, and draw the curve $C$ with $g$-vector $(a,b,c,d)$. We wish to
draw a curve $C'$ with $g$-vector $(a+1,b-1,c+1,d-1)$ in such a way that it does not intersect
$C$. Divide the lefthand side of the Dyck path into a first part of length
$a$, and a second part of length $b$. Divide the part of length $a$ into
$a-1$ segments of equal length, and locate endpoints of $C'$ at the midpoints
of these segments. Similarly, divide the part of length $b$ into $b+1$ segments of equal length, and locate endpoints of $C'$ at the midpoints of these
segments. Then do the same thing on the righthand side. The result is that
we have located an endpoint of $C'$ between each successive pair of endpoints
of $C$, including above the topmost endpoints of $C$ on each side, but not
below the bottommost one on each side. We can draw $C'$ so as to connect them in a noncrossing way.

\begin{figure}
\begin{tikzpicture}[scale=.9]
\draw[step=1.0,gray!50,thin] (-0.5,-0.5) grid (10.5,5.5);\draw[line width=1mm, black,dashed] (-0.5,0)--(10.5,0);
  \draw[line width = 0.4mm, red] (0,0)--(5,5)--(10,0);
  \draw[line width = 0.6mm, blue] (.5,.5)--(9.5,.5);
  \draw[line width = 0.6mm, blue] (1.5,1.5)--(8.5,1.5);
  \draw[line width = 0.6mm, blue] (2.5,2.5)--(7.5,2.5);
  \draw[line width = 0.6mm, blue] (3.5,3.5)--(6.5,3.5);
  \draw[line width = 0.6mm, blue] (4.5,4.5)--(5.5,4.5);
  \draw[line width = 0.6mm, blue!50, dashed] (1,1)--(9.33,.66);
  \draw [line width = 0.6mm, blue!50, dashed] (2.375,2.375)--(8,2);
  \draw [line width = 0.6mm, blue!50, dashed] (3.125,3.125)--(6.66,3.33);
  \draw [line width = 0.6mm, blue!50, dashed] (3.875,3.875)--(5.75,4.25);
  \draw [line width = 0.6mm, blue!50, dashed] (4.625,4.625)--(5.25,4.75);
  \draw[decorate, decoration=brace,red](-0.1,0.1)--(1.9,2.1);
  \node[red] (a) at (.6,1.4) {2};
  \draw[decorate, decoration=brace,red](1.9,2.1)--(4.9,5.1);
  \node[red] (b) at (3.1,3.9) {3};
  \draw[decorate, decoration=brace,red](5.1,5.1)--(6.1,4.1);
  \node[red] (c) at (5.9,4.9) {1};
  \draw[decorate, decoration=brace,red](6.1,4.1)--(10.1,0.1);
  \node[red] (d) at (8.4,2.4) {4};
  \draw[red,fill] (0,0) circle (4pt);
    \draw[red,fill] (5,5) circle (4pt);
    \draw[red,fill] (10,0) circle (4pt);
    \draw[red,fill] (2,2) circle (4pt);
    \draw[red,fill] (6,4) circle (4pt);
\end{tikzpicture}
  \caption{The surface for $g$-vector $(-2,-3,1,4)$, with the corresponding arcs drawn in blue, and with the curves for
    $(-1,-4,2,3)$ superimposed as dotted lines, as in the second case of the proof} 
\end{figure}
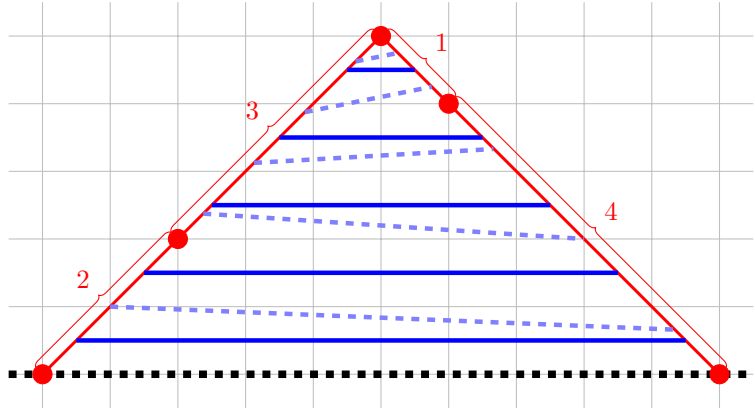

Suppose finally that $b$ is positive and $c$ is negative. Again, essentially the same idea works. \qedhere

\begin{figure}
  \begin{tikzpicture}[scale=.9]
  \draw[step=1.0,gray!50,thin] (-0.5,-0.5) grid (12.5,4.5);\draw[line width=1mm, black,dashed] (-0.5,0)--(12.5,0);
    \draw[line width = 0.4mm, red](0,0)--(4,4)--(7,1)--(9,3)--(12,0);
    \draw[line width = 0.6mm, blue](0.5,0.5)--(11.5,0.5);
    \draw[line width = 0.6mm, blue] (1.5,1.5)--(6.5,1.5);
    \draw[line width = 0.6mm, blue] (2.5,2.5)--(5.5,2.5);
    \draw[line width = 0.6mm, blue] (3.5,3.5)--(4.5,3.5);
    \draw[line width = 0.6mm, blue] (7.5,1.5)--(10.5,1.5);
    \draw[line width = 0.6mm, blue] (8.5,2.5)--(9.5,2.5);
    \draw[line width = 0.6mm, blue!50,dashed](0.66,0.66)--(11.25,.75);
    \draw[line width = 0.6mm, blue!50,dashed](2,2)--(6.25,1.75);
    \draw[line width = 0.6mm, blue!50,dashed](3.33,3.33)--(4.75,3.25);
    \draw[line width = 0.6mm, blue!50,dashed](8,2)--(9.75,2.25);
    \draw[red,fill] (0,0) circle (4pt);
    \draw[red,fill] (4,4) circle (4pt);
    \draw[red,fill] (7,1) circle (4pt);
    \draw[red,fill] (9,3) circle (4pt);
    \draw[red,fill] (12,0) circle (4pt);
    \end{tikzpicture}
    \caption{The surface for $g$-vector $(-4,3,-2,3)$, with the corresponding arcs drawn in blue, and with the arcs for
      $(-3,2,-1,2)$ superimposed as dotted lines, as in the third case of the proof}
    \end{figure}
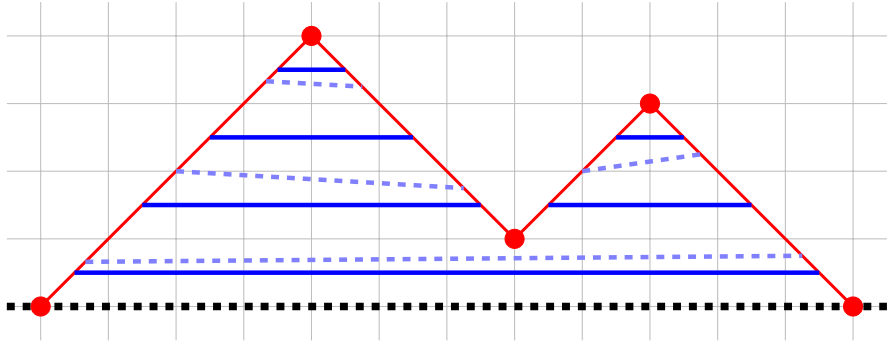
\end{proof} 

Since consecutive points of $\mathcal D$ are compatible, we now see
that any semibrick with $g$-vector in $C$ can be decomposed as a direct
sum of bricks from $\mathcal D$. This implies that $\mathcal D$ accounts
for all the $g$-vectors of bricks in $V$. 

We now check that the points of $\mathcal D$ are exactly the points of
$C$ which satisfy that $\gcd(a+b,b+c)=1$.
By Lemma \ref{lattice-basis}, any lattice point in $C$ can
be expressed as $pv+q(-1,1,-1,1)$ with $p$ positive and $q$ non-negative.
This lattice point satisfies the condition from the statement of the
theorem if and only if $pv$ does, and it is clear that $pv$ satisfies the
condition of the theorem if and only if $p=1$, which gives exactly the
desired condition.
\end{proof}

\section{Example: the Christoffel case \texorpdfstring{$(n=3)$}{(n=3)}}

In this extended example, we consider the case $n=3$.

\subsection{Gentle algebra} $Q_3$ is the quiver:

$$\begin{tikzpicture}[->]
  \node (a) at (0,0) {$1$};
  \node (b) at (2,0) {$2$};
  \node (c) at (4,0) {$3$};
  \draw ([yshift=1mm]b.west)--node[above]{$\beta_{1}$}([yshift=1mm]a.east);
  \draw  ([yshift=-1mm]b.west)--node[below]{$\alpha_{1}$}([yshift=-1mm]a.east);
    \draw ([yshift=1mm]c.west)--node[above]{$\alpha_{2}$}([yshift=1mm]b.east);
  \draw  ([yshift=-1mm]c.west)--node[below]{$\beta_{2}$}([yshift=-1mm]b.east);
  \draw[dashed,-] ([xshift=.4cm]b.north) arc[start angle = 0, end angle = 180, x radius=.4cm, y radius =.2cm];
  \draw[dashed,-] ([xshift=-.4cm]b.south) arc[start angle = 180, end angle = 360, x radius=.4cm, y radius =.2cm];
  
\end {tikzpicture}
$$ 

The paths in $Q_3$ are as follows: there are three lazy paths (one at each vertex), four paths of
length 1 (the arrows), and four paths of length two: $\alpha_1\alpha_2$, $\alpha_1\beta_2$, $\beta_1\alpha_2$, $\beta_1\beta_2$. These eleven paths form a basis
for the path algebra $kQ_3$. 

The set~$R_3$ consists of the two paths $\alpha_1\beta_2$ and
$\beta_1\alpha_2$ (indicated by the dotted lines in the figure), and the ideal $I_3$ consists of linear combinations of these two paths.

Then~$(Q_3,R_3)$ is a gentle quiver.  We define the algebra $\Lambda_3=kQ_3/I_3$.

\subsection{Dissected surface} The dissected surface for $\Lambda_3$ consists
of two $4$-gons $\mathsf P_1$ and $\mathsf P_2$, with edges identified as in \cref{fig:rect}.

\begin{figure}[h!]
\centering
    \begin{tikzpicture}
        \begin{scope}[thick, decoration={markings, mark=at position 0.6 with {\arrow{Stealth[length=3mm]}}},mydot/.style={
    circle,
    thick,
    fill=white,
    draw,
    outer sep=0.4pt,
    inner sep=1pt
  }]
            \draw (0,0) -- node[above] {$E^{(1)}_0$} (4,0);
            \draw[line width=0.4mm,red,postaction={decorate}] (4,0) -- node[right] {$E^{(1)}_1$} (4,-2);
            \draw[line width=0.4mm,red,postaction={decorate}] (4,-2) -- node[below] {$E^{(1)}_2$} (0,-2);
        \draw[line width=0.4mm,red,postaction={decorate}]  (0,-2) -- node[left] {$E^{(1)}_3$} (0,0);
        \draw[dark-green,mydot] (2,0) circle (2.5pt);
        \draw[red,fill] (4,0) circle (3pt);
        \draw[red,fill] (0,0) circle (3pt);
        \draw[red,fill] (0,-2) circle (3pt);
        \draw[red,fill] (4,-2) circle (3pt);
        \draw[red] node at (2,-1) {$\mathsf{P}_1$};
        \end{scope}
        \begin{scope}[xscale=-1,shift={(-10,0)},thick, decoration={markings, mark=at position 0.6 with {\arrow{Stealth[length=3mm]}}},mydot/.style={
    circle,
    thick,
    fill=white,
    draw,
    outer sep=0.4pt,
    inner sep=1pt
  }]
        \draw (0,0) -- node[above] {$E^{(2)}_0$} (4,0);
        \draw[line width=0.4mm,red,postaction={decorate}] (4,0) -- node[left] {$E^{(2)}_3$} (4,-2);
        \draw[line width=0.4mm,red,postaction={decorate}] (4,-2) -- node[below] {$E^{(2)}_2$} (0,-2);
        \draw[line width=0.4mm,red,postaction={decorate}]  (0,-2) -- node[right] {$E^{(2)}_1$} (0,0);
        \draw[dark-green,mydot] (2,0) circle (2.5pt);
        \draw[red,fill] (4,0) circle (3pt);
        \draw[red,fill] (0,0) circle (3pt);
        \draw[red,fill] (0,-2) circle (3pt);
        \draw[red,fill] (4,-2) circle (3pt);
        \draw[red] node at (2,-1) {$\mathsf{P}_2$};
        \end{scope}
    \end{tikzpicture}
        \caption{\label{fig:rect} The two $4-$gons representing the dissected surfaced associated to $\Lambda_3$.} 
       \end{figure}
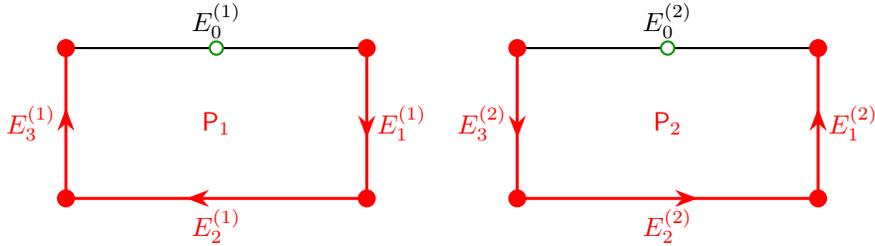

A closed curve drawn on this surface is a closed slalom if it is possible to
orient the curve so that the segments of the curve on $\mathsf P_1$ go from right to
left, while those on $\mathsf P_2$ go from left to right. Simple closed slaloms
correspond to band bricks. Simple closed multislaloms correspond to
band semibricks.

Given a closed slalom on the surface, it corresponds to a walk on the quiver
$\Lambda$ in the following way. Start on $\mathsf P_1$ from the righthand endpoint of a segment of the slalom on $\mathsf P_1$. If the slalom connects $E_{i_0}^{(1)}$ to $E_{i_1}^{(1)}$ (so by hypothesis $i_1>i_0$), walk from vertex $i_0$ to vertex $i_1$ along arrows labelled $\beta$ (in the reverse of the direction of the arrows). Then, continue to follow the slalom on $\mathsf P_2$. If it
goes from $E_{i_1}^{(2)}$ to $E_{i_2}^{(2)}$ (necessarily with $i_1>i_2$, by the slalom condition), then walk forwards
along arrows labelled $\alpha$ from vertex $i_1$ to vertex $i_2$. Continue from there, back on $\mathsf P_1$, and keep going in the same way, until eventually returning to the beginning point, which eventually causes the walk to close up at the initial vertex $i_0$.
Such a  walk $w$ corresponds to a family of band representations,
$B_{w,1,\lambda}$, for $\lambda\in k^\times$, as described in
Section \ref{ssec::band}.

  \begin{figure}[h!]
\centering
    \begin{tikzpicture}
        \begin{scope}[thick, decoration={markings, mark=at position 0.6 with {\arrow{Stealth[length=3mm]}}},mydot/.style={
    circle,
    thick,
    fill=white,
    draw,
    outer sep=0.4pt,
    inner sep=1pt
  }]
            \draw (0,0) -- node[above] {$E^{(1)}_0$} (4,0);
            \draw[line width=0.4mm,red,postaction={decorate}] (4,0) -- node[right] {$E^{(1)}_1$} (4,-2);
            \draw[line width=0.4mm,red,postaction={decorate}] (4,-2) -- node[below] {$E^{(1)}_2$} (0,-2);
        \draw[line width=0.4mm,red,postaction={decorate}] (0,-2) -- node[left] {$E^{(1)}_3$} (0,0);
        \draw[dark-green,mydot] (2,0) circle (2.5pt);
        \draw[red,fill] (4,0) circle (3pt);
        \draw[red,fill] (0,0) circle (3pt);
        \draw[red,fill] (0,-2) circle (3pt);
        \draw[red,fill] (4,-2) circle (3pt);
        \draw[red] node at (2,-1) {$\mathsf{P}_1$};
        
        \draw[line width=0.6mm,blue, bend left=20]  (0,-1.3) edge node[below left,blue] {$1$} (2,-2);
        \draw[line width=0.6mm,blue,bend left = 20]  (0,-0.8) edge node[below right,blue] {$3$}(4,-0.8);
        \end{scope}
        \begin{scope}[xscale=-1,shift={(-10,0)},thick, decoration={markings, mark=at position 0.6 with {\arrow{Stealth[length=3mm]}}},mydot/.style={
    circle,
    thick,
    fill=white,
    draw,
    outer sep=0.4pt,
    inner sep=1pt
  }]
        \draw (0,0) -- node[above] {$E^{(2)}_0$} (4,0);
        \draw[line width=0.4mm,red,postaction={decorate}] (4,0) -- node[left] {$E^{(2)}_3$} (4,-2);
        \draw[line width=0.4mm,red,postaction={decorate}] (4,-2) -- node[below] {$E^{(2)}_2$} (0,-2);
        \draw[line width=0.4mm,red,postaction={decorate}]  (0,-2) -- node[right] {$E^{(2)}_1$} (0,0);
        \draw[dark-green,mydot] (2,0) circle (2.5pt);
        \draw[red,fill] (4,0) circle (3pt);
        \draw[red,fill] (0,0) circle (3pt);
        \draw[red,fill] (0,-2) circle (3pt);
        \draw[red,fill] (4,-2) circle (3pt);
        \draw[red] node at (2,-1) {$\mathsf{P}_2$};
        
         \draw[line width=0.6mm,blue, bend right=20]  (4,-1.3) edge node[below left,blue] {$4$} (2,-2);
        \draw[line width=0.6mm,blue,bend left = 20]  (0,-0.8) edge node[below right,blue] {$2$}(4,-0.8);
        \end{scope}
    \end{tikzpicture}
        \caption{\label{fig:rect2} A slalom, with a possible order in which to follow its edges indicated} 
       \end{figure}
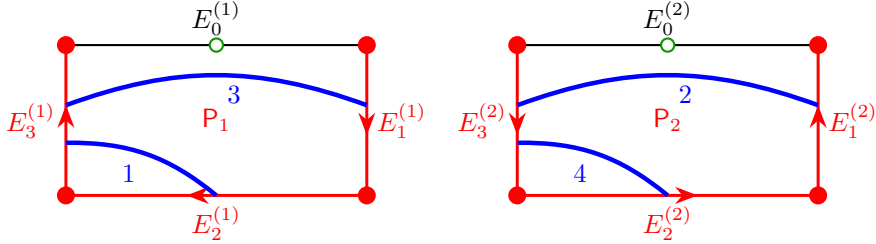

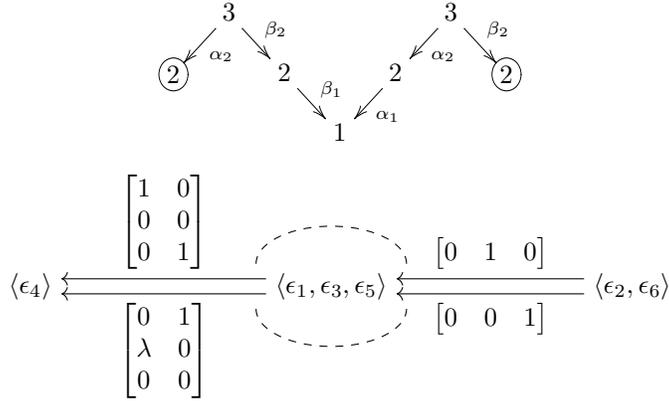
\begin{figure}[h!]
\centering
%\begin{minipage}{6cm}
  \begin{center}
    $\displaystyle \xymatrix@C=1em @R=1em{
      & 3\ar[rd]^{\beta_2} \ar[ld]^{\alpha_2}&  &   &  & 3\ar[rd]^{\beta_2} \ar[ld]^{\alpha_2}& \\
     *+[o][F-]{2}&  &2\ar[rd]^{\beta_1} &   & 2\ar[ld]^{\alpha_1}&  &*+[o][F-]{2}\\
      &  &  & 1 &  &  & }$  

 $$ \begin{tikzpicture}[->]
  \node (a) at (-1,0) {$\langle \epsilon_4 \rangle$};
  \node (b) at (3,0) {$\langle \epsilon_1,\epsilon_3,\epsilon_5 \rangle$};
  \node (c) at (7,0) {$\langle \epsilon_2,\epsilon_6\rangle$};
  \draw ([yshift=1mm]b.west)--node[above]{$ \left[\begin{matrix}
1&0\\0&0\\0&1
\end{matrix} \right]$}([yshift=1mm]a.east);
  \draw  ([yshift=-1mm]b.west)--node[below]{$\left[\begin{matrix}
0&1\\ \lambda&0\\0&0
      \end{matrix} \right]  $}([yshift=-1mm]a.east);
    \draw ([yshift=1mm]c.west)--node[above]{$\left[\begin{matrix}
 0&1&0
\end{matrix} \right]  $}([yshift=1mm]b.east);
  \draw  ([yshift=-1mm]c.west)--node[below]{$\left[\begin{matrix}
 0&0&1
\end{matrix} \right]  $}([yshift=-1mm]b.east);
  \draw[dashed,-] ([xshift=1cm]b.north) arc[start angle = 0, end angle = 180, x radius=1cm, y radius =.5cm];
  \draw[dashed,-] ([xshift=-1cm]b.south) arc[start angle = 180, end angle = 360, x radius=1cm, y radius =.5cm];
  \end {tikzpicture}$$
\end{center}\caption{Top: the band walk corresponding to the slalom of Figure \ref{fig:rect2}. Bottom: the band representation (up to isomorphism) associated to it, following the same construction as \cref{ex::bandrep}.}
  %\end{minipage}
  \label{fig:walk}
\end{figure}

For example, the slalom of Figure \ref{fig:rect2} corresponds to the walk
shown in Figure \ref{fig:walk}, which then corresponds to the
representation below:

The representation corresponding to a closed slalom is a band brick if it is simple, i.e., it can be drawn so as to have no self-intersections. The above
slalom from Figure \ref{fig:rect2} has no self-intersections, so the
corresponding representation is a brick. This fact can also be checked directly using the criterion from Section \ref{ssec:band-morphisms} that there is no substring of this walk which can be found simultaneously on top and at the bottom, so the only
endomorphisms of this module are multiples of the identity.

\subsection{\texorpdfstring{$g$}{g}-vectors} The possible $g$-vectors of a closed, simple multislalom are integer vectors
of the form $(-a,-b,a+b)$ with $a,b \geq 0$, or of the form $(-a-b,a,b)$,
with $a,b\geq 0$.

In the first case, it is convenient to redraw the polygons with edges 1 and 2
parallel. If the lengths of the sides are chosen to agree with the
absolute values of the corresponding entries in the $g$-vector, then the
multislalom can be drawn as equally spaced horizontal lines. See Figure \ref{triangles} (left) for an example. In this case, the same picture suffices for
both $\mathsf P_1$ and $\mathsf P_2$; 

Similarly, in the second case, it is convenient to draw the edges 2 and 3 parallel. See Figure \ref{triangles} (right) for an example.

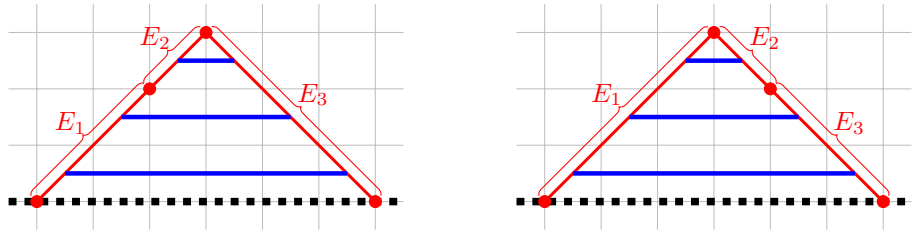
\begin{figure}\begin{tikzpicture}[scale=.75]
\draw[step=1.0,gray!50,thin] (-0.5,-0.5) grid (6.5,3.5);\draw[line width=1mm, black,dashed] (-0.5,0)--(6.5,0);
    \draw[line width=0.4mm,red] (0,0)--(3,3)--(6,0);
    \draw[line width=0.6mm,blue] (.5,.5)--(5.5,.5);
    \draw[line width=0.6mm,blue] (1.5,1.5)--(4.5,1.5);
    \draw[line width=0.6mm,blue] (2.5,2.5)--(3.5,2.5);
    \draw[decorate, decoration=brace,red](-0.1,0.1)--(1.9,2.1);
  \node[red] (a) at (.6,1.4) {$E_1$};
  \draw[decorate, decoration=brace,red](1.9,2.1)--(2.9,3.1);
  \node[red] (b) at (2.1,2.9) {$E_2$};
  \draw[decorate, decoration=brace,red](3.1,3.1)--(6.1,.1);
  \node[red] (c) at (4.9,1.9) {$E_3$};
  \draw[red,fill] (6,0) circle (3pt);
  \draw[red,fill] (0,0) circle (3pt);
  \draw[red,fill] (2,2) circle (3pt);
  \draw[red,fill] (3,3) circle (3pt);
  \end{tikzpicture}\qquad\qquad
  \begin{tikzpicture}[scale=.75]
  \draw[step=1.0,gray!50,thin] (-0.5,-0.5) grid (6.5,3.5);\draw[line width=1mm, black,dashed] (-0.5,0)--(6.5,0);
     \draw[line width = 0.4mm, red] (0,0)--(3,3)--(6,0);
    \draw[line width = 0.6mm, blue] (.5,.5)--(5.5,.5);
    \draw[line width = 0.6mm, blue] (1.5,1.5)--(4.5,1.5);
    \draw[line width = 0.6mm, blue] (2.5,2.5)--(3.5,2.5);
    \draw[decorate, decoration=brace,red](-0.1,0.1)--(2.9,3.1);
  \node[red] (a) at (1.1,1.9) {$E_1$};
  \draw[decorate, decoration=brace,red](3.1,3.1)--(4.1,2.1);
  \node [red](b) at (3.9,2.9) {$E_2$};
  \draw[decorate, decoration=brace,red](4.1,2.1)--(6.1,.1);
  \node[red] (c) at (5.4,1.4) {$E_3$};
  \draw[red,fill] (0,0) circle (3pt);
  \draw[red,fill] (4,2) circle (3pt);
  \draw[red,fill] (3,3) circle (3pt);
  \draw[red,fill] (6,0) circle (3pt);
  \end{tikzpicture}

    \caption{Left: the polygons adapted to the slalom with $g$-vector $(-2,-1,3)$, together with the corresponding slalom. Right: the polygons adapted to the slalom with $g$-vector $(-3,1,2)$, together with the corresponding slalom.\label{triangles}}
\end{figure} 

There is a bijection between these two families of slaloms which sends a
slalom to its mirror image (on each $\mathsf P_i$).

Note that there is exactly
one slalom which falls in both classes: this is the slalom that consists of a single curve between $E_1$ and $E_3$ on each of $\mathsf P_1$ and $\mathsf P_2$.
It is fixed under this bijection.

\subsection{Christoffel words}

When the alphabet has two letters, a perfectly clustering word, which is also a Lyndon word, is called a \newword{lower Christoffel word}. For example, for $A=\{a<b\}$, $a,b,ab,aab,aabab, ababb$ are lower Christoffel words. Equivalently, take any primitive perfectly clustering words, and pick in its conjugation class the smallest element for the lexicographical order: this is a lower Christoffel word (an upper Christoffel word is obtained by picking the largest element).

The {\it slope} of a word $w$ in $\{a<b\}^*$ is the ratio $|w|_b/|w|_a$, where $|w|_x$ denotes the number of occurrences of the letter $x$ in $w$; the slope is an element of $\mathbb Q\cup \{\infty\}$. It turns out that for any two lower Christofflel word $u,v$, one has $u<_{lex} v$ if and only if the slope of $u$ is smaller than that of $v$ (result due to Borel and Laubie; see \cite{R} Proposition 13.1.1). For example, $aabab<_{lex} ababb$ and their slopes are respectively $2/3$ and $3/2$.

The one-parameter families of band bricks whose $g$-vectors are of the form $(-a-b,a,b)$
correspond bijectively to Christoffel words by restricting Theorem 
\ref{thm: brick bands and pcw} to the $n=3$ case. The Christoffel word with
$a$ occurrences of the letter $x$ and $b$ occurrences of the letter $y$
correponds to the band brick with $g$-vector $(-a-b,a,b)$.
In particular, we see that there is a band brick with $g$-vector
$(-a-b,a,b)$ with $a,b\geq 0$ if and only if $a$ and $b$ are relatively prime.

The band bricks whose
$g$-vectors are of the form $(-a,-b,a+b)$ are also in bijective correspondence with Christoffel words, by first applying the bijection mentioned above from the families of band bricks with $g$-vectors $(-a,-b,a+b)$ to $(-a-b,a,b)$, and then applying the bijection to Christoffel words.

\subsection{Euler form}
The Euler form (see Section \ref{euler}) is given on $g$-vectors by
$$\langle (a_1,a_2,a_3),(b_1,b_2,b_3)\rangle =
a_1b_1 + 2a_1b_2 + 2a_1b_3 +a_2b_2 + 2a_2b_3 + a_3b_3$$

Let $v$ and $w$ be two Christoffel words. Consider the representations
corresponding to them under the correspondence from Theorem \ref{thm: brick bands and pcw}, whose $g$-vectors are $g_v=(-|v|_x-|v|_y,|v|_x,|v|_y)$ and
$g_w=(-|w|_x-|w|_y,|w|_x,|w|_y)$ respectively. 
Substituting these into the Euler form, we obtain:
\begin{eqnarray*}
  \langle g_v,g_w\rangle &=& (|v|_x+|v|_y)(|w|_x+|w|_y)-2(|v|_x+|v|_y)|w|_x
  -2(|v|_x+|v|_y)|w|_y\\&&\qquad + |v|_x|w|_x +2|v|_x|w|_y + |v|_y|w|_y\\
  &=& |v|_x|w|_y - |v|_y|w|_x\end{eqnarray*}

This expression is well-known as playing an important rôle in the combinatorics of Christoffel words.

\subsection{Compatible bricks}

Two distinct one-parameter families of bricks are never compatible. This
is a special case of Theorem \ref{maxsize}. It also follows from the above formula for the Euler form, which shows that $\langle g_v,g_w\rangle$ will be non-zero if $v$ and $w$ have distinct slopes, while there is only one one-parameter family of bricks with each possible slope.

\end{document}